\def\cE{\mathcal{E}}
\def\cY{\mathcal{Y}}
\def\\rho{\mathcal{\rho}}
\def\cP{\mathcal{P}}
\def\P{\cP}
\def\N{\mathbb N}
\def\R{\mathbb R}
\def\T{\mathbb T}
\def\\rho\rho{\mathbf{\rho}}
\def\eps{\varepsilon}
\def\diam{{\rm diam}}
\def\supp{{\rm supp}}
\def\weaker{{\sigma}}
\def\weakstar{\stackrel{*}{\rightharpoonup}}
\DeclareMathAlphabet{\mathup}{OT1}{\familydefault}{m}{n}
\newcommand{\dx}[1]{\mathop{}\!\mathup{d} #1}
\DeclarePairedDelimiter{\abs}{\lvert}{\rvert}
\DeclarePairedDelimiter{\norm}{\lVert}{\rVert}
\DeclarePairedDelimiter{\bra}{(}{)}
\DeclarePairedDelimiter{\set}{\{}{\}}
\newcommand{\Leb}{\ensuremath{{L}}}
\newcommand{\SobH}{\ensuremath{{H}}}
\newtheorem{theorem}{Theorem}
\newtheorem{proposition}[theorem]{Proposition}
\newtheorem{lemma}[theorem]{Lemma}
\newtheorem{corollary}[theorem]{Corollary}
\theoremstyle{definition}
\newtheorem{defn}[theorem]{Definition}
\newtheorem{assumption}{Assumption}
\theoremstyle{remark}
\newtheorem{remark}[theorem]{Remark}
\newenvironment{tenumerate}[1][]
  {\enumerate[label=\textup(\alph*\textup),ref=(\alph*),#1]}
  {\endenumerate}
\newcommand{\Dref}[1]{\dodref\Cref#1\relax}
\def\dodref#1#2!#3\relax{#1{#2}\ref{#2!#3}}
\mathchardef\mhyphen="2D
\numberwithin{equation}{section}
\numberwithin{figure}{section}
\numberwithin{theorem}{section}
\title[An invariance principle for gradient flows]{An invariance principle for gradient flows \\ in the space of probability measures}
\author{Jos\'e A. Carrillo$^{\dagger}$, Rishabh S. Gvalani$^{\ddagger}$, \& Jeremy Wu$^{\dagger}$}
\date{\today}
\subjclass[2020]{35B40, 37L15, 49Q25, 35Q70}
\address[$\dagger$]{Mathematical Institute, University of Oxford, Oxford OX2 6GG, UK}
\address[$\ddagger$]{Max-Planck-Institut f\"ur Mathematik in den Naturwissenschaften, 04103 Leipzig, Germany} 
\begin{document}
\maketitle
\begin{abstract}
We seek to establish qualitative convergence results to a general class of evolution PDEs described by gradient flows in optimal transportation distances. These qualitative convergence results come from dynamical systems under the general name of LaSalle Invariance Principle. By combining some of the basic notions of gradient flow theory and dynamical systems, we are able to reproduce this invariance principle in the setting of evolution PDEs under general assumptions. We apply this abstract theory to a non-exhaustive list of examples that recover, simplify, and even extend the results in their respective literatures.
\end{abstract}
\section{Introduction}
Our goal is to show that gradient flows in metric spaces can be cast into the framework of dynamical systems to apply generic qualitative long-time convergence results. In particular, we seek to reformulate the LaSalle invariance principle in the setting of evolution PDEs. Let us provide a formal and instructive review of this concept in finite dimensional ODEs, we refer to~\cite[Chapter 9]{CH98} for a general semigroup description. For $x_0\in \R^d$, consider a curve $x: [0,\infty) \to \R^d$ solving
\[
\frac{\dx{}}{\dx{t}}x(t) = F(x(t)), \quad x(0) = x_0,
\]
where $F :\R^d \to \R^d $ is a sufficiently smooth driving vector field. The \emph{LaSalle invariance principle} in this case reads
\begin{theorem}[LaSalle invariance principle: ODE version]
Suppose $\Phi : \R^d \to \R$ is continuous and satisfies the strict Lyapunov condition; $t\mapsto \Phi(x(t))$ is non-increasing and
\[
\forall t \ge 0, \,  \Phi(x(t)) = \Phi(x_0) \implies \forall t \ge 0, \, x(t) = x_0.
\]
Assume further that the set $ \bigcup_{t\ge 0} \{x(t)\}$ is relatively compact. Then, the distance between $x(t)$ and the set of equilibrium points $F^{-1}(\{0\})$ converges to 0 as $t\to \infty$.
\end{theorem}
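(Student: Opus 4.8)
The plan is to run the classical $\om$-limit set argument. First I would introduce the $\om$-limit set of the orbit,
\[
\om(x_0) = \{\, y \in \R^d : \text{there exists } t_n \to \infty \text{ with } x(t_n) \to y \,\},
\]
and record its basic topological properties. Since $\bigcup_{t \ge 0}\{x(t)\}$ is relatively compact, the forward orbit remains in a fixed compact set, so by smoothness of $F$ the solution is defined for all $t \ge 0$ and the flow map $\phi_s : x(t) \mapsto x(t+s)$ is well defined and continuous; moreover $\om(x_0)$ is non-empty and compact, and $\dist(x(t), \om(x_0)) \to 0$ as $t \to \infty$ by a standard subsequence-extraction argument using relative compactness.

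Next I would identify the value of $\Phi$ on the limit set. Because $t \mapsto \Phi(x(t))$ is non-increasing and bounded below (the orbit has compact closure and $\Phi$ is continuous), the limit $\Phi_\infty := \lim_{t\to\infty}\Phi(x(t))$ exists. For any $y \in \om(x_0)$ with $x(t_n) \to y$, continuity of $\Phi$ gives $\Phi(y) = \lim_n \Phi(x(t_n)) = \Phi_\infty$, so $\Phi \equiv \Phi_\infty$ on $\om(x_0)$.

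The heart of the argument is the invariance of $\om(x_0)$ under the flow, which I would obtain from continuous dependence on initial data: if $x(t_n) \to y$ then $\phi_s(x(t_n)) = x(t_n + s) \to \phi_s(y)$, and since $t_n + s \to \infty$ this shows $\phi_s(y) \in \om(x_0)$ for every $s \ge 0$. Consequently, for any $y \in \om(x_0)$ the trajectory $s \mapsto \phi_s(y)$ remains in $\om(x_0)$, where $\Phi$ is constantly equal to $\Phi_\infty = \Phi(y)$; thus $\Phi$ is constant along this trajectory. Applying the strict Lyapunov condition to it (read as a property of $\Phi$ relative to the dynamics, valid for every initial point, with $y$ now playing the role of $x_0$) forces $\phi_s(y) = y$ for all $s \ge 0$; differentiating at $s = 0$ yields $F(y) = 0$. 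Hence $\om(x_0) \subseteq F^{-1}(\{0\})$, and combining with $\dist(x(t), \om(x_0)) \to 0$ gives $\dist(x(t), F^{-1}(\{0\})) \to 0$, as claimed.

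I expect the main obstacle to be the invariance step, since it is where the dynamical structure (global existence of the flow and continuous dependence on initial conditions) is genuinely used; in the metric-space generalisation pursued later in the paper this is precisely the part that must be replaced by suitable continuity and compactness hypotheses on the gradient-flow semigroup. A secondary subtlety is the correct reading of the strict Lyapunov condition: it must be invoked for the trajectory emanating from a limit point $y$, so it has to be interpreted as holding for all admissible initial data rather than only for the fixed $x_0$.
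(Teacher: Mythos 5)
Your argument is the classical LaSalle proof and it is correct for the ODE statement as given, granting the reading you explicitly flag: the strict Lyapunov condition must be a hypothesis on $\Phi$ valid for every initial datum (so that it can be applied to a trajectory started at a point $y$ of the $\omega$-limit set), not merely for the fixed $x_0$; under the literal pointwise reading the theorem would fail. Note, however, that the paper does not actually prove this theorem --- it is quoted as background with a reference to~\cite[Chapter 9]{CH98} --- and the route taken for its abstract analogues is genuinely different. The soft ingredients of your argument (non-emptiness and compactness of the $\omega$-limit set and the convergence $\dist(x(t),\omega(x_0))\to 0$) do reappear, as \cref{thm:ch} and the contradiction argument in the proof of \cref{thm:converge}. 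But the step you correctly identify as the heart of the matter --- flow-invariance of $\omega(x_0)$ via continuous dependence on initial data, followed by an application of the Lyapunov condition along a trajectory emanating from a limit point --- is precisely what the paper does \emph{not} generalise: curves of maximal slope need not be unique, so there is no continuous semigroup and no continuous dependence to invoke. Instead, the proof of \cref{thm:omegastat} exploits the quantitative energy--dissipation identity~\eqref{eq:maxslope}: integrating it yields $\int_0^\infty G^q(\rho(t))\,\dx{t}<\infty$, a generalised Arzel\`a--Ascoli argument gives compactness of the shifted curves $\rho(t_n+\cdot)$, and lower semicontinuity of $G$ then forces $G=0$ at every $\omega$-limit point. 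In the ODE picture this amounts to deducing $F(y)=0$ for $y\in\omega(x_0)$ from $\int_0^\infty\abs{\nabla E(x(t))}^2\,\dx{t}<\infty$ together with continuity of $\nabla E$, with no appeal to the flow through $y$. The trade-off is that your invariance argument works for an arbitrary vector field $F$ admitting a strict Lyapunov function, whereas the dissipation route requires the gradient-flow (or at least energy--dissipation) structure; conversely, only the latter survives the passage to non-unique, merely lower semicontinuous gradient flows that the paper is after.
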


If we specialize to \textit{gradient flows}, the situation simplifies significantly. If $E:\R^d\to \R$ is some (smooth) function, let us now consider
\[
\frac{\dx{}}{\dx{t}}x(t) = -\nabla E(x(t)), \quad x(0) = x_0.
\]
In this setting, $E$ is immediately a strict Lyapunov function since
\[
\frac{\dx{}}{\dx{t}}E(x(t)) = -|\nabla E(x(t))|^2 \le 0.
\]
Here, it suffices to check the compactness of trajectories $x(t)$ to apply the invariance principle. The compactness property can also be pivoted onto $E$ since trajectories are contained in sublevel sets of $E$; it suffices to check that sublevel sets of $E$ are relatively compact. We seek to extend these advantages in finite dimensional gradient flows to infinite dimensional gradient flows commonly describing evolution PDEs in biology, physics, social networks, and many other applications. The function $E$ describes quite a lot of the dynamics of the ODEs and this is also true for PDEs which are the infinite dimensional analogues.

Let us formally introduce the basic notion of gradient flows in the space of probability measures by describing a typical example of such an equation. We refer the reader to~\cref{sec:prelim} or~\cite{AGS08} for more details and greater generality. For $\Omega = \R^d$ or $\T^d$, and functions $F : \R_{\ge 0} \to \R; \, V,W : \Omega \to \R$ we consider the functional on probability densities with finite second moment, denoted by $\cP_2(\Omega)$,
\[
E(\rho) = \int_\Omega F(\rho) \dx{x} + \int_\Omega V(x) \dx{\rho}(x) + \frac{1}{2}\int_\Omega (W*\rho)(x) \dx{\rho}(x).
\]
We refer to the following PDE or its solution as Wasserstein gradient flow of $E$
\[
\partial_t\rho = \nabla \cdot \left(
\rho \nabla (F' + V +  W*\rho)
\right).
\]
In fact, following Otto's formal Riemannian calculus~\cite{Ott01}, this PDE can be interpreted closer to the ODE gradient flow by
\[
\frac{\dx{}}{\dx{t}}\rho(t) = -\nabla_{W_2}E(\rho),
\]
where $W_2$ is the 2-Wasserstein distance on $\cP_2(\Omega)$. Here $\nabla_{W_2}$  denotes the gradient operator
\begin{align}
\nabla_{W_2}:= \nabla \cdot \bra*{\rho \nabla \frac{\delta}{\delta \rho}} \, ,
\end{align}
on $(\cP_2(\Omega), W_2(\cdot,\cdot))$, which can be thought of as an infinite-dimensional Riemannian manifold (cf., for example,~\cite{Lot08,Gig12}). By a formal computation, we see that
\[
\frac{\dx{}}{\dx{t}}E(\rho(t)) = - \int_\Omega |\nabla (F' + V + W*\rho)|^2 \dx{\rho}(x) \le 0,
\]
which is in direct analogy with the ODE setting, i.e. $E$ satisfies the strict Lyapunov condition. This property has been the basis of a very active field of research in the last couple of decades. Our focus is to exploit this property at the abstract level of dynamical systems described above. Several difficulties arise before we can formulate a a version of the LaSalle invariance principle in this context. Firstly, the regularity of $E: \rho \mapsto E(\rho)$ is usually limited to lower semicontinuity in the most interesting cases. Secondly, the compactness of trajectories is usually established in a \textit{weaker} topology than the $W_2$ topology. We now list a few prominent examples of Wasserstein gradient flows, some of which we consider in Section~\ref{sec:examples}, are given by
\begin{enumerate}
    \item Fokker-Planck equations (cf.~\cite{JKO98}): $F(x) = x\log x, \, V \in C^\infty(\Omega), \, W = 0$. Assuming $V$ satisfies certain growth assumptions, it is known that solutions converge exponentially in  the 2-Wasserstein distance towards the unique stationary solution given by
    \[
    \rho_s(x) = \frac{e^{-V(x)}}{\int_\Omega e^{-V(y)}\dx{y}}.
    \]
    \item Porous medium (and fast diffusion) equations (cf.~\cite{Ott01,Vaz07}): $$F(x) = \left\{
    \begin{array}{cc}
    \displaystyle x\log x     &m=1  \\
    \dfrac{1}{m-1}x^m     &m > 1 
    \end{array}
    \right.$$ and $V=W=0$. It is known that self-similar solutions exist and give rise to finite or infinite time extinction depending on the value of $m$.
    \item Aggregation equations (cf.~\cite{CDFLS11}): $F = V = 0$ with $W$ typically chosen to be radial. These equations can describe a variety of interaction-based phenomena from biology, physics, and social networks. One of the main topics of interest is what conditions guarantee consensus concentration or segregation.
    \item Parabolic-elliptic Keller-Segel models (cf.~\cite{BCC08,BCL209}): $F = x\log x, V=0$, and $W$ is the negative Green's function in $\R^d$. Understanding how certain infinite-time profiles are generated is a very active area of research. The long-time behaviour of solutions strongly depends on the initial mass.
    \item Machine Learning (c.f.~\cite{W20,WE20} and the references therein): An important step in the implementation of multi-layer deep neural networks is to optimise the network parameters (with respect to some appropriately chosen cost function) by evolving them using a gradient descent scheme. The mean field equation associated to this system corresponds to a gradient flow in the $2$-Wasserstein metric. The long-time asymptotics of the density of the parameters  then describes, in some sense, the optimal choice of parameters in the training.
\end{enumerate}
We postpone further discussion on the literature of some of these particular examples and others to the relevant subsections below (cf.~\cref{sec:examples}).

 This paper seeks to unify the convergence results of many different PDEs under one general theory. However, one of the limitations of this soft approach is the lack of quantitative rates of convergence. We note that deriving explicit rates for convergence usually relies on \'a la carte techniques that are problem-dependent. Furthermore, these techniques only provide convergence of solutions towards the set of equilibrium points. In specific cases, one can extract more information about the structure of this set and thus obtain a stronger convergence results e.g. to a particular stationary solution. In the situation where there are multiple equilibrium points (cf.~\cite{CGPS20,CG19}), we can only hope to obtain local rates of convergence in the possible basins of attraction for stable stationary solutions. Our result gives the first step in this direction; for instance, in the presence of phase transitions, depending on the energy of the initial data, we can identify the limiting stationary solutions (cf.~\cref{sec:kuramoto} and \cref{sec:MV}).

The plan of the paper is as follows: In Section~\ref{sec:prelim}, we develop the notation and assumptions necessary to state the main results of~\cref{thm:omegastat,thm:converge,thm:converge'}. Section~\ref{sec:omegalimit} is dedicated to the proofs of these results. Section~\ref{sec:examples} is dedicated to examples of gradient flows in which our abstract theory applies as well as a small overview of the existing literature in each example. Appendix~\ref{sec:dynsys} contains a brief review of the abstract formalism of dynamical systems in metric spaces with a mild adaptation to allow for non-unique solutions. Appendix~\ref{sec:lsc} contains a technical lemma allowing us to easily check lower semicontinuity of certain functionals, which we repeatedly use in our examples.

\section{Preliminaries and the main abstract result}
\label{sec:prelim}
We denote by $\cP(X)$ the space of all Borel probability measures on some Polish metric space $(X,d)$. We equip this with the so-called narrow or weak topology, i.e. the coarsest topology on $\cP(X)$ such that all functions of the form $\rho \mapsto \int_X \varphi \dx{\rho}$, $\varphi \in C_b(X)$ are continuous, where $C_b(X)$ is the space of bounded, continuous functions on $X$. This topology is metrisable by, for instance, the L\'evy--Prokhorov metric, which we denote by $d_{LP}(\cdot,\cdot)$. Furthermore, we define $\cP_p(X)$ to be the subset of $\cP(X)$ consisting of all probability measures $\rho$ with finite $p$-moment. We equip this space with the topology generated by the so-called $p$-Wasserstein or transportation cost distance, $W_p(\cdot,\cdot)$.  Note that the space $\cP_p(X)$ also inherits a topology from $\cP(X)$ as a subspace of $\cP(X)$. This topology, i.e. the narrow topology on $\cP_p(X)$, is coarser than the $W_p$-topology. We will denote this topology by $\weaker$. Note that this topology is metrisable (for example by the restriction of $d_{LP}$ to $\cP_p(X)$) and we denote the associated metric by $d_\sigma(\cdot,\cdot)$. Given a sequence of probability measures $(\rho_n)_{n \in \N}$ and a topology $\tau$, we denote convergence to a point $\rho \in \cP(X)$ in this topology by $\rho_n \overset{\tau}{\to} \rho$ as $n \to \infty$. For weak convergence in $\cP(X)$, we will often just write $\rho_n \to \rho$ as $n \to \infty$.

We are interested in studying the long time behaviour of $W_p$ gradient flows in $\cP_p(X)$ ($1 < p < \infty$). We start by introducing some preliminary notions from the theory of gradient flows that will play a role in our subsequent analysis. We will use a weaker notion of solution referred to as a \emph{curve of maximal slope} rather than the stronger \emph{gradient flow} notion of solution based on the so-called \emph{Evolutionary Variational Inequality (EVI)}. The advantage of using this notion is that it is softer and allows for non-uniqueness of solutions. For the rest of the paper, even though we will use the two terms interchangeably, we remind the reader that we will only be working with the curve of maximal slope notion of solution.
\begin{defn}(Absolutely continuous curves,~\cite[Definition 1.1.1]{AGS08})
Fix $T>0$ and $1< p<\infty$. We say that that $\rho \in C([0,T]; \cP_p(X))$ is \emph{absolutely continuous}, denoted by $\rho \in AC([0,T]; \cP_p(X))$, if there exists an $m \in \Leb^1([0,T])$, such that
\begin{align}
W_p(\rho(t),\rho(s)) \leq \int_s^t m(r) \dx{r} \, ,
\label{eq:md}
\end{align} 
for all $0 \leq s<t \leq T$. We say that $\rho \in AC([0,\infty);\cP_p(X))$ if $\rho \in AC([0,T]; \cP_p(X))$ for all $T>0$.
\end{defn}  
\begin{defn}(Metric derivative,~\cite[Definition 1.1.2]{AGS08})
Fix $T>0$ and $1< p<\infty$ and consider a curve $\rho \in AC([0,T];\P_p(X)$. Then, the limit
\begin{align}
\abs{\rho'}(t):= \lim_{s \to t} \frac{W_{p}(\rho(t),\rho(s))}{\abs{t-s}} \, ,
\end{align} 
exists $t$ a.e. The function $t \mapsto \abs{\rho'}(t)$ is in $\Leb^1([0,T])$ and is referred to as the \emph{metric derivative} of the curve $\rho$. Furthermore, $\abs{\rho'}$ is the minimal admissible function for the inequality in~\eqref{eq:md}.
\label{def:md}
\end{defn} 
 Consider a proper function $E : \cP_p(X) \to (-\infty,+\infty]$. This function should be thought of as the free energy driving our system. We now present the final ingredient needed to define the notion of solution we will use in this paper, i.e. the weak upper gradient.
 \begin{defn}(Weak upper gradient,~\cite[Definition 1.2.2]{AGS08})
 \label{def:wug}
 A function $G: \cP_p(X) \to [0,+\infty]$ is a \emph{weak upper gradient} for the energy $E$ if for every curve $\rho \in AC([0,T], \cP_p(X))$ such that
 \begin{enumerate}[label=(\roman*)]
\item $G\circ\rho \abs{\rho'} \in \Leb^1([0,T])$
\item $E \circ \rho$ is a.e. equal to a function $\varphi$ with finite pointwise variation in $[0,T]$ and there holds
\[
|\varphi'(t)| \le G(\rho(t)) |\rho'|(t), \quad \text{a.e. }t\in [0,T].
\]
\end{enumerate}
\end{defn}
We introduce the definition of a $p$-curve of maximal slope:
\begin{defn}(Curve of maximal slope,~\cite[Definition 1.3.2]{AGS08})
\label{defn:maxslope}
A curve $\rho \in AC([0,\infty); \cP_p(X))$ is said to be a \emph{$p$-curve of maximal slope} for the functional $E$ with respect to its weak upper gradient $G$ if $E \circ \rho$ is a.e. ~equal to a non-increasing map $\varphi_t = E(\rho(t))$ and
\begin{equation}
\label{eq:maxslope}
\frac{\dx{}}{\dx{t}}\varphi_t= -\frac{1}{p}\abs{\rho'}^p(t) - \frac{1}{q}G^q(\rho(t)) \quad \textrm{a.e. }t \in [0,\infty) \, ,\tag{MS}
\end{equation}
where $q=p/(p-1)$ and $\abs{\rho'}$ is the metric derivative of $\rho$ in the sense of \cref{def:md}. We will often abbreviate `$p$-curve of maximal slope for the functional $E$ with respect to its upper gradient $G$' by \emph{curve of maximal slope}. Furthermore, we will call $\rho \in AC([0,\infty); \cP_p(X))$ a \emph{$p$-curve of maximal slope with initial measure/datum $\rho_0 \in \cP_p(X),E(\rho_0)<\infty$} if $\rho$ is a $p$-curve of maximal slope and $\rho(0)=\rho_0$.
\end{defn}
\begin{remark}
The fact that curves of maximal slope exist under rather mild assumptions on $E$ and $G$ follows from~\cite[Theorems 2.3.1 and 2.3.3]{AGS08}. Furthermore, it is known that under the additional assumption of $\lambda$-convexity (in the sense of McCann~\cite{McC94}) on $E$ and for $p=2$ for every $\rho_0\in\cP_2(X)$, there exists a unique (up to a.e. ~equality) curve of maximal slope $\rho$ with $\rho(0) = \rho_0$ (c.f.~\cite[Theorem 11.1.4]{AGS08}). As mentioned earlier, we will not work with guaranteed uniqueness, instead we choose to fix some curve of maximal slope for which existence is guaranteed; our example in Section~\ref{sec:nonuniq} illustrates how our convergence results work even in the presence of non-uniqueness. 
\end{remark}

\begin{assumption} We will impose the following set of assumptions on our energy, $E$, and its weak upper gradient, $G$:
\begin{enumerate}[label={(A\arabic*)}, ref={A\arabic*}]
\item $E: \cP_p(X) \to (-\infty,+\infty]$  is a proper, bounded below, and lower semicontinuous (l.s.c.) functional with respect to the $\weaker$-topology. \label{ass1}
\item The sublevel set $L_{\leq C}(E):=\set{\rho \in \cP_p(X) : E(\rho) \leq C }$  is relatively compact in the  $\weaker$-topology for all $C \in \R$. \label{ass2} 
\item $G: \cP_p(X) \to [0,\infty]$ is l.s.c. with respect to the $\weaker$-topology. \label{ass3}
\end{enumerate}
\end{assumption}
\begin{assumption} In certain cases, the energy $E$ and its weak upper gradient, $G$, may have extensions $\tilde{E}$ and $\tilde{G}$ defined on $\cP(X)$ such that $\tilde{E}(\rho)=E(\rho)$ and $\tilde{G}(\rho)=G(\rho)$ for all $\rho \in \cP_p(X)$. In this setting, we make the following set of assumptions:
\begin{enumerate}[label={(B\arabic*)}, ref={B\arabic*}]
\item $\tilde{E}: \cP(X) \to (-\infty,+\infty]$ is a proper, bounded below, and l.s.c. functional.\label{ass1'}
\item The sublevel set $L_{\leq C}(\tilde{E}):=\set{\rho \in \cP(X) : \tilde{E}(\rho) \leq C }$  is relatively compact in $\cP(X)$. \label{ass2'}
\item $\tilde{G}: \cP(X) \to [0,\infty]$  is l.s.c. in $\cP(X)$. \label{ass3'} 
\end{enumerate}
\end{assumption}

Note that ~\eqref{ass1'} implies~\eqref{ass1} and ~\eqref{ass3'} implies~\eqref{ass3}. Furthermore,~\eqref{ass2} implies~\eqref{ass2'}.  The set of assumptions~\eqref{ass1'}--\eqref{ass3'} handles precisely the case where gradient flows exist in $\cP_p(X)$ for every $t>0$, while allowing for convergence towards steady states in $\cP(X)$ (but possibly not in $\cP_p(X)$). We refer to the example in~\cref{sec:nonuniq} which illustrates this.    

\begin{assumption}
We may sometimes replace assumptions~\eqref{ass2} and~\eqref{ass2'} by the following assumptions:
\begin{enumerate}[label={(\Alph*2')}, ref={\Alph*2'}]
\item The sublevel set $ L_{\leq C}(E):=\set{\rho \in \cP_p(X) \cap \cY: E(\rho) \leq C }$ is relatively compact in the $\sigma$-topology on $\cP_p(X)$. \label{ass2subset}
\item The sublevel set $ L_{\leq C}(E):=\set{\rho \in \cP_p(X) \cap \cY: E(\rho) \leq C }$ is relatively compact in  $\cP(X)$. \label{ass2'subset} 
\end{enumerate}
Here, $\cY$ is a subset of $\cP_p(X)$ (resp. $\cY \subset \cP(X)$) such that $\bigcup_{t \geq 0} \set{\rho(t)} \subseteq \cY $, where $\rho(t)$ is a $p$-curve of maximal slope of $E$ with respect to $G$ for some initial measure $\rho_0$.  Here, $\cY$ could be a subset which is invariant under the gradient flow (eg. radial measures) or could be the curve of maximal slope itself. 
\end{assumption}


We wish to better understand stationary solutions and convergence towards these states under a dynamical systems viewpoint. One of the main points to clarify is the relation between the set of stationary solutions and the $\omega$-limit set of~\eqref{eq:maxslope} when treated as an abstract dynamical system for a given initial condition. To be precise, we define these notions in our present context. We have adapted some classical definitions from dynamical systems (which can be found in the Appendix~\ref{sec:dynsys}) for our present context to allow for non-unique trajectories. We start with the following result:
\begin{proposition}\label{prop:generates}
Assume that for all $\rho_0 \in \cP_p(X), E(\rho_0) < \infty$ there exists a $p$-curve of maximal slope with initial measure $\rho_0$. We define the set
\begin{align}
Z_{E,p}:= \set{\rho \in \cP_p(X):E(\rho) < \infty} \, ,
\end{align}
which we equip with the $\weaker$-topology.

 Then, the family of mappings  $\set{S_t}_{t \geq 0}$, $S_t:Z_{E,p} \to 2^{Z_{E,p}}$ sending $\rho_0 \mapsto \bigcup_{j \in J}\rho_j(t)$, where $J$ indexes the set of all possible curves of maximal slope with initial measure $\rho_0$,  defines a metric dynamical system in the sense of~\cref{def:mds}.

\end{proposition}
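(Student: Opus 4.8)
The plan is to verify that the family $\set{S_t}_{t \geq 0}$ satisfies each of the defining axioms of a metric dynamical system (with possibly non-unique trajectories) in the sense of \cref{def:mds}. I expect these axioms to be: (i) the initial-time identity $S_0\rho_0 = \set{\rho_0}$; (ii) the semigroup/concatenation property $S_{t+s}\rho_0 = S_t(S_s \rho_0)$ in the appropriate set-valued sense; and (iii) some measurability/continuity compatibility of the maps $t \mapsto S_t\rho_0$ with the $\weaker$-topology on $Z_{E,p}$. The first axiom is immediate from the definition of a curve of maximal slope, since every such curve $\rho_j$ satisfies $\rho_j(0) = \rho_0$, so the union over $j \in J$ collapses to $\set{\rho_0}$.

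For the semigroup property, the key structural fact is that curves of maximal slope are stable under \emph{restriction} and \emph{concatenation}. First I would show that if $\rho \in AC([0,\infty); \cP_p(X))$ is a curve of maximal slope for $E$ with respect to $G$, then for any $s \geq 0$ the time-shifted curve $t \mapsto \rho(t+s)$ is again a curve of maximal slope with initial measure $\rho(s)$; this follows because absolute continuity, the metric derivative, and the energy-dissipation identity \eqref{eq:maxslope} are all local-in-time conditions, so they are inherited on any subinterval and invariant under time translation. This gives the inclusion $S_{t+s}\rho_0 \subseteq S_t(S_s\rho_0)$: the tail of any curve of maximal slope through $\rho_0$ is a curve of maximal slope starting from a point of $S_s\rho_0$. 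For the reverse inclusion, given any $\sigma \in S_s\rho_0$ realised by a curve $\rho$ with $\rho(s) = \sigma$, and any curve $\tilde{\rho}$ of maximal slope starting at $\sigma$, I would concatenate them into a single curve equal to $\rho$ on $[0,s]$ and to $\tilde{\rho}(\cdot - s)$ on $[s,\infty)$, then check this concatenation is itself a curve of maximal slope with initial measure $\rho_0$. The energy $E \circ \rho$ is non-increasing on each piece and continuous across the junction (both pieces pass through $\sigma$ at time $s$), so it remains non-increasing globally, and \eqref{eq:maxslope} holds a.e. on each piece hence a.e. on $[0,\infty)$.

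The main obstacle I anticipate is precisely this concatenation step: verifying that the glued curve retains membership in $AC([0,\infty); \cP_p(X))$ and that the energy-dissipation identity is genuinely preserved across the gluing point, rather than merely on the interiors of the two subintervals. The subtle points are that absolute continuity must be checked across the junction (which follows from the triangle inequality for $W_p$ together with additivity of the bounding $\Leb^1$ functions from \eqref{eq:md}), and that one must confirm the metric derivative of the concatenation agrees a.e. with the metric derivatives of the pieces so that the exponents $\tfrac1p\abs{\rho'}^p + \tfrac1q G^q(\rho)$ match up. Since \eqref{eq:maxslope} is required only \emph{a.e.}, the single junction point $t = s$ is negligible and poses no genuine difficulty once continuity of $E \circ \rho$ at $s$ is established.

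Finally I would address the measurability/compatibility axiom, noting that each constituent curve is by hypothesis in $AC([0,\infty); \cP_p(X))$ and hence continuous from $[0,\infty)$ into $(\cP_p(X), W_p)$; since the $\weaker$-topology is coarser than the $W_p$-topology, the maps $t \mapsto \rho_j(t)$ are in particular $\weaker$-continuous, which furnishes the regularity required by \cref{def:mds}. Assembling the three axioms then yields that $\set{S_t}_{t\geq 0}$ is a metric dynamical system on $(Z_{E,p}, d_\sigma)$, completing the proof.
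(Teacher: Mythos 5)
Your verification targets the wrong list of axioms. For a \emph{set-valued} metric dynamical system, \cref{def:mds} asks for only three things: \eqref{metdyn1} that $S_t$ maps $Z_{E,p}$ into $2^{Z_{E,p}}$, \eqref{metdyn2} that $S_0$ is the identity $z \mapsto \set{z}$, and \eqref{metdyn3} that $S_t\rho_0$ is swept out by trajectories lying in $C([0,\infty);Z_{E,p})$. The semigroup identity $S_{t+s}=S_t\circ S_s$ is required only of a \emph{uniquely-defined} metric dynamical system, which is not what the proposition claims. Hence the bulk of your argument --- the restriction and concatenation analysis of curves of maximal slope --- addresses an axiom that is not there; it is also the most delicate part of your write-up, since in \cref{defn:maxslope} the monotone function $\varphi$ is only a.e.\ equal to $E\circ\rho$, so ensuring the glued $\varphi$ is non-increasing across the junction requires more than the continuity of $E\circ\rho$ at $t=s$ that you assert without proof. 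None of this is needed.

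Meanwhile, the one condition that does require a (one-line) argument is skipped: you never check \eqref{metdyn1}, i.e.\ that $\rho_j(t)\in Z_{E,p}$ for all $t\geq 0$, which amounts to $E(\rho_j(t))<\infty$. This is exactly where the gradient-flow structure enters: since by \cref{defn:maxslope} and \eqref{eq:maxslope} the map $t\mapsto E(\rho_j(t))$ is non-increasing, one has $E(\rho_j(t))\leq E(\rho_0)<\infty$, so the trajectory remains in $Z_{E,p}$. Your handling of the other two axioms --- $S_0=I$ because every curve of maximal slope with initial measure $\rho_0$ satisfies $\rho_j(0)=\rho_0$, and continuity of trajectories because $\rho_j\in AC([0,\infty);\cP_p(X))$ and the $\weaker$-topology is coarser than the $W_p$-topology --- agrees with the paper's proof. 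Fixing the proposal therefore means adding the energy-monotonicity observation and deleting (or clearly flagging as optional) the concatenation argument.
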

\begin{proof}
We need to check that the conditions of~\cref{def:mds} are satisfied for curves of maximal slope. The condition~\eqref{metdyn1} is trivially satisfied because curves of maximal slope decrease the energy $E$. For condition~\eqref{metdyn2}, one can check that $S_0$ maps any initial datum $\rho_0$ to the initial point of the curve of maximal slope $\rho(0)=\rho_0$. Thus, it coincides with the identity, $I$. Condition~\eqref{metdyn3} is satisfied because curves of maximal slope live in $AC([0,\infty); \cP_p(X))$.
\end{proof}
We now  define what we mean by a stationary state of a curve of maximal slope.
\begin{defn}
\label{defn:equil}
We say that $\mu\in \cP_p(X)$  is a \emph{stationary state} of~\eqref{eq:maxslope} if $E(\mu)<\infty$ and the curve defined for all $t\geq 0$ by $\rho(t):= \mu$ is a curve of maximal slope according to~\cref{defn:maxslope}. We denote the set of stationary states by $\mathcal{E}$.
\end{defn}
Note that~\cref{prop:generates} tells us that curves of maximal slope generate a metric dynamical system on the space $Z_{E,p}$ equipped with the $\weaker$-topology. Thus, the definition of the corresponding $\omega$-limit set should be with respect to this topology. For the convenience of the reader, we recast~\cref{def:wls} into our particular setting.
\begin{defn}
\label{defn:omegalimit}
For $\rho_0\in Z_{E,p}$ and $\rho\in AC([0,\infty); \cP_p(X))$ a curve of maximal slope with $\rho(0) = \rho_0$, we denote by $\omega^\rho(\rho_0)$ the \emph{$\omega$-limit set} with initial data $\rho_0$ subordinate to $\rho$, defined by
\[
\omega^\rho(\rho_0) := \set*{\bar{\rho} \in \cP_p(X) \, : \,  \exists t_n \to \infty \textrm{ s.t. } \rho(t_n)\overset{\sigma}{\to} \bar{\rho} \textrm{ as }n \to \infty}.
\]
\end{defn}
We now give a characterisation of stationary states in relation to the weak upper gradient $G$ which we will use extensively in the sequel.
\begin{lemma}
\label{lemma:equil}
$\rho\in\cP_p(X)$ is a stationary state of~\eqref{eq:maxslope} if and only if $G(\rho)=0$ and $E(\rho)<+\infty$.
\end{lemma}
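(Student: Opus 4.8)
The plan is to prove both implications using the defining equation~\eqref{eq:maxslope} for a curve of maximal slope, combined with the characterisation of the metric derivative in~\cref{def:md}. The key observation is that a constant curve $\rho(t) \equiv \rho$ has the smallest possible metric derivative, namely $\abs{\rho'}(t) = 0$ for a.e. $t$, since $W_p(\rho(t),\rho(s)) = W_p(\rho,\rho) = 0$ for all $s,t$. The energy along such a curve is constant, so $E \circ \rho \equiv E(\rho)$ is (trivially) non-increasing and $\frac{\dx{}}{\dx{t}}\varphi_t = 0$ a.e. The whole argument then reduces to examining what~\eqref{eq:maxslope} forces on the right-hand side when both the time-derivative of the energy and the metric derivative vanish.

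For the forward direction, suppose $\rho$ is a stationary state, so by~\cref{defn:equil} the constant curve $\rho(t) \equiv \rho$ is a curve of maximal slope with $E(\rho) < \infty$. Substituting $\frac{\dx{}}{\dx{t}}\varphi_t = 0$ and $\abs{\rho'}(t) = 0$ into~\eqref{eq:maxslope} gives
\[
0 = -\frac{1}{p}\cdot 0 - \frac{1}{q}G^q(\rho) = -\frac{1}{q}G^q(\rho) \quad \text{a.e. } t,
\]
and since $q = p/(p-1) \in (1,\infty)$ and $G(\rho) \geq 0$, this forces $G(\rho) = 0$. Together with $E(\rho) < \infty$ this is exactly the claimed conclusion.

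For the converse, suppose $G(\rho) = 0$ and $E(\rho) < \infty$. I would take the constant curve $\rho(t) \equiv \rho$ as a candidate curve of maximal slope and verify the conditions of~\cref{defn:maxslope}. The curve lies in $AC([0,\infty);\cP_p(X))$ with $\abs{\rho'} \equiv 0$ (the zero function is an admissible $m$ in~\eqref{eq:md}). The map $\varphi_t := E(\rho(t)) = E(\rho)$ is constant, hence non-increasing with $\frac{\dx{}}{\dx{t}}\varphi_t = 0$ a.e., and its finite pointwise variation is zero. It remains to check~\eqref{eq:maxslope}: the right-hand side equals $-\frac{1}{p}\cdot 0 - \frac{1}{q}G^q(\rho) = 0$ since $G(\rho) = 0$, matching the left-hand side. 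One should also confirm the weak upper gradient compatibility conditions (i) and (ii) of~\cref{def:wug} for this curve, but these are immediate: $G\circ\rho\,\abs{\rho'} \equiv 0 \in \Leb^1$ and the inequality $\abs{\varphi'(t)} \leq G(\rho(t))\abs{\rho'}(t)$ reads $0 \leq 0$. Thus $\rho(t)\equiv\rho$ is a genuine curve of maximal slope, so $\rho \in \mathcal{E}$.

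I do not anticipate a serious obstacle here: the result is essentially a direct unwinding of the definitions, since the constant curve makes every term in~\eqref{eq:maxslope} vanish except the $G^q$ term. The only point requiring mild care is ensuring that the constant curve legitimately satisfies all clauses of~\cref{defn:maxslope,def:wug}—in particular that $\abs{\rho'} \equiv 0$ is the correct metric derivative and that the measurability/integrability requirements are met—but these are routine once one notes that $W_p(\rho,\rho) = 0$. The structural content is simply that stationarity, vanishing metric derivative, and the maximal-slope identity together pin down $G(\rho) = 0$ as the sole nontrivial condition.
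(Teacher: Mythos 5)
Your proof is correct and follows essentially the same route as the paper's: both directions reduce to observing that for the constant curve the metric derivative and the time-derivative of the energy vanish, so~\eqref{eq:maxslope} collapses to $0=-\tfrac{1}{q}G^q(\rho)$. The extra care you take in verifying absolute continuity and the clauses of~\cref{def:wug} for the constant curve is harmless but not needed beyond what the paper records.
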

\begin{proof}
($\implies$) By~\cref{defn:equil}, the curve $\rho(t) = \rho$ solves~\eqref{eq:maxslope}. Since $\rho(t)$ is constant for all times $t$, the first two terms of~\eqref{eq:maxslope} are a.e. ~zero which implies $G(\rho)=G(\rho(t))=0$.

\noindent($\impliedby$) By defining the curve $\rho(t) = \rho$, the first two terms of~\eqref{eq:maxslope} are always zero. Since $G(\rho(t)) = G(\rho) = 0$, equation~\eqref{eq:maxslope} holds automatically.
\end{proof}
The above lemma leads to the following natural relaxation of the notion of stationary state.
\begin{defn} \label{def:wss}
Assume the energy $E$ and the weak upper gradient $G$ have extensions to $\cP(X)$, i.e. there exist  functions $\tilde{E}:\cP(X) \to (-\infty,+\infty]$ and $\tilde{G}:\cP(X) \to [0,+\infty]$, such that $\tilde{E}(\rho)=E(\rho)$ and $\tilde{G}(\rho)=G(\rho)$ for all $\rho \in \cP(X)$. Then, we say that a measure $\mu \in \cP_p(X)$ is a \emph{weak stationary state} of~\eqref{eq:maxslope} if $\tilde{E}(\mu)< +\infty$ and $\tilde{G}(\mu)=0$. We denote the set of weak stationary states by $\mathcal{E}_w$.
\end{defn}
The motivation for the above definition follows from the fact that for certain curves of maximal slope the long time limits may lie outside the set $\cP_p(X)$. This is especially true for curves of maximal slope associated to drift-diffusion type Fokker--Planck equations when the confining potential $V$ does not grow fast enough at $+\infty$. The above definition then allows us to characterise the stationary states of these PDEs as zeroes of the associated extended weak upper gradient $\tilde{G}$ and obtain useful information about the long time behaviour of these curves of maximal slope even if the stationary states lie outside $\cP_p(X)$. We will discuss a specific instance of this in~\cref{sec:fplog}. Accordingly, we give a relaxed definition of $\omega$-limit sets.
\begin{defn}
For $\rho_0\in Z_{E,p}$ and $\rho\in AC([0,\infty);\cP_p(X))$ a curve a maximal slope with $\rho(0)=\rho_0$, we denote by $\omega'^\rho(\rho_0)$ the \textit{relaxed $\omega$-limit set} with initial data $\rho_0$ subordinate to $\rho$, defined by
\[
\omega'^\rho(\rho_0) := \set*{
\bar{\rho}\in \cP(X) \, : \, \exists t_n \to \infty \textrm{ s.t. }\rho(t_n) \to \bar{\rho} \textrm{ in } \cP(X) \textrm{ as } n\to \infty}.
\]
\end{defn}
With these notations, our first main result is the relation between $\omega^\rho(\rho_0)$ (resp. $\omega'^\rho(\rho_0)$) and $\mathcal{E}$ (resp. $\mathcal{E}_w$).
\begin{theorem}
\label{thm:omegastat}
Fix $\rho_0\in Z_{E,p}$ and a curve of maximal slope $\rho \in AC([0,\infty); \cP_p(X))$ with initial condition $\rho(0) = \rho_0$. Under the set of assumptions~\eqref{ass1}, ~\eqref{ass2} (or~\eqref{ass2subset}), ~\eqref{ass3}, we have
\[
\omega^\rho(\rho_0) \subset \mathcal{E},
\]
so that $\omega$-limits are stationary. 
Under the set of assumptions~\eqref{ass1'}, ~\eqref{ass2'} (or~\eqref{ass2'subset}), ~\eqref{ass3'}, we have
$
\omega'^\rho(\rho_0) \subset \mathcal{E}_w,
$
so that relaxed $\omega$-limits are weak stationary states.
\end{theorem}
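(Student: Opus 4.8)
The plan is to establish the two inclusions in parallel, exploiting the curve of maximal slope structure encoded in~\eqref{eq:maxslope} together with \cref{lemma:equil} (resp. \cref{def:wss}). The central observation is that the energy $t \mapsto \varphi_t = E(\rho(t))$ is non-increasing and bounded below (by~\eqref{ass1}, resp.~\eqref{ass1'}), hence converges to some finite limit $E_\infty$ as $t \to \infty$. Integrating the defining identity~\eqref{eq:maxslope} over $[0,\infty)$ then yields
\[
\int_0^\infty \left( \frac{1}{p}\abs{\rho'}^p(t) + \frac{1}{q} G^q(\rho(t)) \right) \dx{t} = \varphi_0 - E_\infty < +\infty,
\]
so that $G^q \circ \rho \in \Leb^1([0,\infty))$. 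The finiteness of this integral is the engine of the proof: it forces $G(\rho(t))$ to be small along a sequence of times tending to infinity.

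First I would fix $\bar\rho \in \omega^\rho(\rho_0)$ and pick $t_n \to \infty$ with $\rho(t_n) \overset{\sigma}{\to} \bar\rho$. Since the tail integral $\int_{t_n}^{t_n+1} G^q(\rho(t)) \dx{t} \to 0$, I can extract (by the mean value property of the integral, or by refining the sequence) times $s_n \to \infty$ with $\rho(s_n) \overset{\sigma}{\to} \bar\rho$ \emph{and} $G(\rho(s_n)) \to 0$; care is needed here because a priori $\bar\rho$ is a limit of $\rho(t_n)$, not $\rho(s_n)$, so I would either argue that $\rho$ is $\sigma$-continuous and the times $s_n$ can be chosen close to $t_n$, or invoke that the whole trajectory accumulates at $\bar\rho$ along a subsequence where $G$ vanishes. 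The cleanest route is to note that for each $n$ there is $s_n \in [t_n, t_n+1]$ with $G(\rho(s_n))^q \le \int_{t_n}^{t_n+1} G^q(\rho(t))\dx{t} \to 0$, and then use the $\sigma$-compactness from~\eqref{ass2} (or~\eqref{ass2subset}) — the whole trajectory lies in a fixed sublevel set $L_{\leq E(\rho_0)}(E)$, which is relatively $\sigma$-compact — to pass to a further subsequence along which $\rho(s_n) \overset{\sigma}{\to} \tilde\rho$ for some $\tilde\rho$. By lower semicontinuity of $G$ in the $\sigma$-topology~\eqref{ass3}, $G(\tilde\rho) \le \liminf_n G(\rho(s_n)) = 0$, so $G(\tilde\rho) = 0$, and by lower semicontinuity of $E$~\eqref{ass1} we have $E(\tilde\rho) \le \liminf_n E(\rho(s_n)) = E_\infty < \infty$; hence $\tilde\rho \in \mathcal{E}$ by \cref{lemma:equil}. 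The remaining task is to identify $\tilde\rho$ with $\bar\rho$, or rather to show every $\sigma$-accumulation point of the trajectory is stationary — so I would run the argument directly on an arbitrary $\bar\rho \in \omega^\rho(\rho_0)$ by showing the defining sequence $t_n$ can itself be taken to satisfy $G(\rho(t_n)) \to 0$.

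To make that last point rigorous, I expect the main obstacle: reconciling the \emph{given} accumulation times $t_n$ (along which $\rho$ converges to $\bar\rho$) with the \emph{good} times (along which $G$ is small). The difficulty is that the set of times where $G(\rho(t))$ is small has full-measure density at infinity (since $G^q \circ \rho$ is integrable), but the accumulation times $t_n$ form a sparse sequence that need not intersect it. I would resolve this by a diagonal/density argument: because $G^q \circ \rho$ is integrable, the "bad set" $\{t : G(\rho(t)) > \eps\}$ has finite measure for every $\eps > 0$, so its density at infinity is zero; combined with the continuity of $\rho$ in $\sigma$ (curves of maximal slope are in $AC([0,\infty);\cP_p(X))$, hence $W_p$-continuous and a fortiori $\sigma$-continuous), I can perturb each $t_n$ to a nearby good time $s_n$ with $\rho(s_n)$ still $\sigma$-close to $\bar\rho$ — quantitatively, using~\eqref{eq:md} to control $W_p(\rho(t_n),\rho(s_n)) \le \int_{t_n}^{s_n} \abs{\rho'}(r)\dx{r}$, which is small since $\abs{\rho'}^p$ is also integrable on $[0,\infty)$. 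Passing to the limit then gives $G(\bar\rho) \le \liminf_n G(\rho(s_n)) = 0$ and $E(\bar\rho) \le E_\infty < \infty$, so $\bar\rho \in \mathcal{E}$.

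The relaxed statement is proved by the identical scheme with the following substitutions: convergence $\rho(t_n) \to \bar\rho$ is now in the narrow topology of $\cP(X)$, the sublevel sets are relatively compact in $\cP(X)$ by~\eqref{ass2'} (or~\eqref{ass2'subset}), and lower semicontinuity of the extensions $\tilde E, \tilde G$ on $\cP(X)$ is supplied by~\eqref{ass1'} and~\eqref{ass3'}. Since $\tilde G(\rho(s_n)) = G(\rho(s_n)) \to 0$ along the good times (the extensions agree with $E,G$ on $\cP_p(X)$, where the whole trajectory lives), narrow lower semicontinuity gives $\tilde G(\bar\rho) \le \liminf_n \tilde G(\rho(s_n)) = 0$ and $\tilde E(\bar\rho) \le E_\infty < \infty$. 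By \cref{def:wss} this means $\bar\rho \in \mathcal{E}_w$, completing the proof. The only genuinely new ingredient relative to the first case is that the limit $\bar\rho$ is allowed to leave $\cP_p(X)$, which is precisely why the narrow topology and the extended functionals are used throughout.
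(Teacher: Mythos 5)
Your proposal is correct, but it takes a genuinely different route from the paper. Both arguments start from the same engine — integrating~\eqref{eq:maxslope} against the lower bound of~\eqref{ass1} to get $G^q\circ\rho\in\Leb^1([0,\infty))$ and $\abs{\rho'}^p\in\Leb^1([0,\infty))$ — but they diverge at the step you correctly identify as the main obstacle: reconciling the accumulation times $t_n$ with the times where $G$ is small. The paper resolves this by constructing a limit \emph{curve}: it sets $\rho^n(t)=\rho(t_n+t)$ on $[0,1]$, proves the uniform H\"older estimate $W_p(\rho^n(t),\rho^n(s))\leq C\abs{t-s}^{(p-1)/p}$ via Jensen, invokes the refined Arzel\`a--Ascoli result~\cite[Proposition 3.3.1]{AGS08} to extract $\rho^\infty\in C([0,1];\cP_p(X))$ with $\rho^\infty(0)=\rho^*$, shows $G^q(\rho^\infty(t))=0$ a.e.\ on $[0,1]$ by Fatou and~\eqref{ass3}, and then lets $t_m\to 0$ along the null set's complement, using continuity of $\rho^\infty$ and l.s.c.\ of $G$ once more. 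You instead avoid the limit-curve construction entirely: you pick $s_n\in[t_n,t_n+1]$ with $G^q(\rho(s_n))$ below the tail average, and control $W_p(\rho(t_n),\rho(s_n))\leq\bigl(\int_{t_n}^{t_n+1}\abs{\rho'}^p\bigr)^{1/p}\to 0$ so that $\rho(s_n)$ inherits the $\sigma$-limit $\bar\rho$ (this uses that $W_p$-closeness forces $d_\sigma$-closeness, e.g.\ via $d_{LP}\lesssim W_1^{1/2}\leq W_p^{1/2}$; you should state this comparison explicitly, since it is the one nontrivial glue in your argument). Your approach is more elementary — it needs no compactness theorem for curves, only the tail decay of two integrals and one application each of~\eqref{ass1} and~\eqref{ass3} — while the paper's approach produces the auxiliary object $\rho^\infty$, which is not needed for the statement but is closer in spirit to the classical invariance-principle machinery. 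The relaxed case is handled by both in the same way, by substitution of topologies and extended functionals.
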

The set inclusions from~\cref{thm:omegastat} are essential to the following convergence results.
\begin{theorem}
\label{thm:converge}
Assume the energy $E$ and weak upper gradient $G$ satisfy assumptions~\eqref{ass1}, \eqref{ass2} (or~\eqref{ass2subset}), and~\eqref{ass3}. Furthermore, assume that for any $\rho_0 \in Z_{E,p}$ there exists a curve of maximal slope, $\rho \in AC([0,\infty);\cP_p(X))$ of $E$ with respect to $G$. Then, it follows that
\begin{align}
\lim_{t \to \infty}d_{\sigma}(\rho(t),\cE) = \lim_{t \to \infty} \inf_{\mu \in \cE} d_{\sigma}(\rho(t),\mu) =0 \, . 
\end{align}
Furthermore if~\eqref{ass2subset} is satisfied and $\cY$ is closed in the $\sigma$-topology on $\cP_p(X)$, then  
\begin{align}
\lim_{t \to \infty}d_{\sigma}(\rho(t),\cE \cap \cY) =0\, .
\end{align}
\end{theorem}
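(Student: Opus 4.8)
The plan is to reduce the statement entirely to the inclusion $\omega^\rho(\rho_0) \subset \cE$ provided by \cref{thm:omegastat}, combined with the relative compactness of the trajectory, via a standard LaSalle-type contradiction argument. The first observation is that, since $\rho$ is a curve of maximal slope, $t \mapsto E(\rho(t))$ is non-increasing, so the whole trajectory is confined to the sublevel set $L_{\leq E(\rho_0)}(E)$. Under \eqref{ass2} this set is relatively $\weaker$-compact, while under \eqref{ass2subset} the trajectory additionally lies in $\cY$ and the corresponding (intersected) sublevel set is relatively $\weaker$-compact. In either case $\bigcup_{t \geq 0}\set{\rho(t)}$ is relatively compact in the $\weaker$-topology, which in particular guarantees that $\omega^\rho(\rho_0)$ is nonempty and hence, by \cref{thm:omegastat}, that $\cE \neq \emptyset$; this is what makes the distance $d_\sigma(\rho(t),\cE)$ a well-defined finite quantity in the first place.

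For the convergence itself I would argue by contradiction. Suppose $d_\sigma(\rho(t),\cE) \not\to 0$; then there exist $\varepsilon > 0$ and a sequence $t_n \to \infty$ with $d_\sigma(\rho(t_n),\cE) \geq \varepsilon$ for all $n$. By relative compactness, pass to a subsequence along which $\rho(t_{n_k}) \overset{\sigma}{\to} \bar{\rho}$ for some $\bar{\rho} \in \cP_p(X)$. By \cref{defn:omegalimit} we then have $\bar{\rho} \in \omega^\rho(\rho_0)$, and \cref{thm:omegastat} gives $\bar{\rho} \in \cE$. Since $d_\sigma$ metrises the $\weaker$-topology, $d_\sigma(\rho(t_{n_k}),\bar{\rho}) \to 0$, whence $d_\sigma(\rho(t_{n_k}),\cE) \leq d_\sigma(\rho(t_{n_k}),\bar{\rho}) \to 0$, contradicting the lower bound $\varepsilon$. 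This establishes the first claim.

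For the refined statement under \eqref{ass2subset} with $\cY$ closed in the $\weaker$-topology, I would run the identical contradiction argument, now with $\cE$ replaced by $\cE \cap \cY$. The only additional input needed is that the limit point $\bar{\rho}$ lies in $\cY$: since each $\rho(t_{n_k}) \in \bigcup_{t\geq 0}\set{\rho(t)} \subseteq \cY$ and $\cY$ is $\weaker$-closed, its $\weaker$-limit $\bar{\rho}$ also belongs to $\cY$; combining this with $\bar{\rho} \in \cE$ yields $\bar{\rho} \in \cE \cap \cY$, and the same chain of inequalities $d_\sigma(\rho(t_{n_k}),\cE \cap \cY) \leq d_\sigma(\rho(t_{n_k}),\bar{\rho}) \to 0$ produces the contradiction.

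Since the substantive work has already been carried out in \cref{thm:omegastat}, no genuine difficulty remains at this stage; the argument is essentially the abstract LaSalle deduction. The two points that require care, rather than being obstacles, are the extraction of a nonempty $\omega$-limit set from relative compactness (so that $\cE$ is nonempty and distances are finite), and the explicit use of metrisability of the $\weaker$-topology by $d_\sigma$, so that the $\sigma$-convergence of the extracted subsequence genuinely converts into $d_\sigma$-convergence to zero.
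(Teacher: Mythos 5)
Your proposal is correct and follows essentially the same route as the paper: the paper also reduces the claim to the inclusion $\omega^\rho(\rho_0)\subset\cE$ from \cref{thm:omegastat} and then runs the identical compactness-plus-contradiction argument (via \cref{lemma:trajrelcpct}), phrased as convergence of $\rho(t)$ to $\omega^\rho(\rho_0)$ rather than directly to $\cE$. Your treatment of the $\cY$-closed case likewise matches the paper's observation that $\omega^\rho(\rho_0)\subset\cY$.
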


\begin{theorem}
\label{thm:converge'}
Assume the energy $E$ and the weak upper gradient $G$ have extensions to $\cP(X)$ such that $\tilde{E}$ and weak upper gradient $\tilde{G}$ satisfy assumptions~\eqref{ass1'}, \eqref{ass2'} (or~\eqref{ass2'subset}), and~\eqref{ass3'}. Furthermore, assume that for any $\rho_0 \in Z_{E,p}$ there exists a curve of maximal slope, $\rho \in AC([0,\infty);\cP_p(X))$ of $E$ with respect to $G$. Then, it follows that
\begin{align}
\lim_{t \to \infty}d_{LP}(\rho(t),\cE_w) = \lim_{t \to \infty} \inf_{\mu \in \cE_w} d_{LP}(\rho(t),\mu) =0 \, . 
\end{align}  
Furthermore if~\eqref{ass2'subset} is satisfied and $\cY$ is closed in $\cP(X)$, then
\begin{align}
\lim_{t \to \infty}d_{LP}(\rho(t),\cE_w \cap \cY)=0 \, .
\end{align}
\end{theorem}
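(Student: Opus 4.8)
The plan is to mirror the argument one would give for \cref{thm:converge}, working now in the metrisable space $(\cP(X), d_{LP})$ and invoking the relaxed inclusion $\omega'^\rho(\rho_0) \subset \cE_w$ furnished by \cref{thm:omegastat}. Since $\rho$ is a curve of maximal slope, $E \circ \rho$ is non-increasing, and therefore $\tilde{E}(\rho(t)) = E(\rho(t)) \le E(\rho_0) =: C < \infty$ for every $t \ge 0$. Thus the whole trajectory is trapped in the sublevel set $L_{\leq C}(\tilde{E})$, which by~\eqref{ass2'} is relatively compact in $\cP(X)$. Under~\eqref{ass2'subset} one uses instead that the trajectory lies in $\cP_p(X) \cap \cY$ and that the corresponding sublevel set is relatively compact in $\cP(X)$.

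First I would record that $\omega'^\rho(\rho_0)$ is non-empty. Fix any sequence $t_n \to \infty$; by the relative compactness just noted, together with the metrisability of $\cP(X)$ (so that relative compactness coincides with relative sequential compactness), the sequence $(\rho(t_n))_n$ admits a subsequence converging in $\cP(X)$ to some $\bar\rho$, and by definition $\bar\rho \in \omega'^\rho(\rho_0)$. In particular $\cE_w \supset \omega'^\rho(\rho_0) \neq \emptyset$ by \cref{thm:omegastat}, so that $d_{LP}(\rho(t), \cE_w) = \inf_{\mu \in \cE_w} d_{LP}(\rho(t),\mu)$ is well defined.

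The core of the argument is a contradiction. Suppose $d_{LP}(\rho(t), \cE_w) \not\to 0$ as $t \to \infty$. Then there exist $\delta > 0$ and a sequence $t_n \to \infty$ with $d_{LP}(\rho(t_n), \cE_w) \ge \delta$ for all $n$. Applying the compactness above to $(\rho(t_n))_n$, I extract a subsequence $\rho(t_{n_k}) \to \bar\rho$ in $\cP(X)$ with $\bar\rho \in \omega'^\rho(\rho_0)$. By \cref{thm:omegastat}, $\bar\rho \in \cE_w$, whence $d_{LP}(\rho(t_{n_k}), \cE_w) \le d_{LP}(\rho(t_{n_k}), \bar\rho) \to 0$, contradicting $d_{LP}(\rho(t_{n_k}), \cE_w) \ge \delta$. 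This proves the first claim.

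For the refinement under~\eqref{ass2'subset} with $\cY$ closed in $\cP(X)$, the only additional ingredient is that every limit point lies in $\cY$: since $\bigcup_{t \ge 0}\{\rho(t)\} \subseteq \cY$ and $\cY$ is closed, any $\bar\rho = \lim_k \rho(t_{n_k})$ belongs to $\cY$, and combined with $\bar\rho \in \cE_w$ this gives $\bar\rho \in \cE_w \cap \cY$. Repeating the contradiction argument verbatim with $\cE_w$ replaced by $\cE_w \cap \cY$ yields $d_{LP}(\rho(t), \cE_w \cap \cY) \to 0$. The proof is essentially soft: the genuine work, namely identifying the relaxed $\omega$-limit points as weak stationary states, has already been done in \cref{thm:omegastat}, so the only point requiring care here is the correct transfer of the compactness hypothesis from the $\sigma$-topology on $\cP_p(X)$ to the narrow topology on $\cP(X)$, which is exactly what~\eqref{ass2'} (resp.~\eqref{ass2'subset}) provides.
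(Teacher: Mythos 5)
Your proposal is correct and follows essentially the same route as the paper: the paper proves \cref{thm:converge'} by repeating the contradiction argument of \cref{thm:converge} with $d_\sigma$, $\omega^\rho(\rho_0)$, and \cref{lemma:trajrelcpct} replaced by $d_{LP}$, $\omega'^\rho(\rho_0)$, and \cref{lem:relaxedcpct}, and handles the $\cY$-refinement by noting that closedness of $\cY$ forces $\omega'^\rho(\rho_0) \subset \cY$. Your treatment of non-emptiness of the relaxed $\omega$-limit set and of the closed-$\cY$ case matches the paper's \cref{corollary:relaxednonempty} and its concluding remark, so nothing is missing.
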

We end this section with the following small lemma that can be used to improve the convergence in~\cref{thm:converge,thm:converge'}:
\begin{lemma}
Let $Y$ be a set equipped with two metrics $d_1$ and $d_2$ such that $d_2$ induces a finer topology on $Y$ than $d_1$. Furthermore, let $y:[0,\infty) \to Y$ be a curve such that
\begin{align}
\lim_{t \to \infty} d_1(y(t),Y^*)=\lim_{t \to \infty}\inf_{y \in Y^*} d_1(y(t),Y^*)=0 \, ,
\label{eq:curveconv}
\end{align}
for some $Y^* \subseteq Y$. Assume that $\bigcup_{t \geq 0}\set{y(t)}$ is compact with respect to the topology generated by $d_2$, then
\begin{align}
\lim_{t \to \infty} d_2(y(t),Y^*)=0 \, .
\end{align}
\label{lem:curveconv}
\end{lemma}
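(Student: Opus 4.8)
The plan is to argue by contradiction, using the $d_2$-compactness of the trajectory to upgrade the given $d_1$-convergence to $Y^*$ into $d_2$-convergence. Two elementary facts drive the argument: since the $d_2$-topology is finer than the $d_1$-topology, every $d_2$-convergent sequence converges in $d_1$ to the same limit; and for any metric $d$ the map $z \mapsto d(z,Y^*) := \inf_{w \in Y^*} d(z,w)$ is $1$-Lipschitz, hence continuous, with respect to $d$.

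Suppose the conclusion fails, so that $d_2(y(t),Y^*) \not\to 0$. Then there are $\eps > 0$ and times $t_n \to \infty$ with $d_2(y(t_n),Y^*) \geq \eps$ for every $n$. Each $y(t_n)$ belongs to $\bigcup_{t \geq 0}\{y(t)\}$, which by hypothesis is $d_2$-compact; passing to a subsequence (not relabelled), I get $y(t_n) \overset{d_2}{\to} y^\ast$ for some $y^\ast$ lying in this set, and in particular $y^\ast \in Y$, since a compact set is closed. By the first fact above, the same subsequence satisfies $y(t_n) \overset{d_1}{\to} y^\ast$.

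I then pass to the limit in both distance functions. Continuity of $z \mapsto d_1(z,Y^*)$ along $y(t_n) \overset{d_1}{\to} y^\ast$, combined with the hypothesis $d_1(y(t),Y^*) \to 0$, gives $d_1(y^\ast, Y^*) = 0$; continuity of $z \mapsto d_2(z,Y^*)$ along $y(t_n) \overset{d_2}{\to} y^\ast$ turns $d_2(y(t_n),Y^*) \geq \eps$ into $d_2(y^\ast, Y^*) \geq \eps$. To reach a contradiction it remains to deduce $d_2(y^\ast, Y^*) = 0$ from $d_1(y^\ast, Y^*) = 0$, and this is the step I expect to be the crux. The difficulty is precisely that the finer-topology implication runs only one way: $d_1(y^\ast,Y^*)=0$ merely places $y^\ast$ in the $d_1$-closure of $Y^*$, and a sequence in $Y^*$ approaching $y^\ast$ in $d_1$ need not approach it in $d_2$. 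The clean resolution is to use that $Y^*$ is closed — which is the relevant case in the applications, where $Y^* = \cE$, $\cE_w$, or a single stationary state — so that $d_1(y^\ast,Y^*)=0$ forces $y^\ast \in Y^*$ and hence $d_2(y^\ast,Y^*)=0$, contradicting $d_2(y^\ast,Y^*)\geq \eps$. As the argument holds for every such $\eps$, the limit $d_2(y(t),Y^*)\to 0$ follows.
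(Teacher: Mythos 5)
Your argument is the same as the paper's: assume for contradiction that $d_2(y(t_n),Y^*)\geq\eps$ along some $t_n\to\infty$, use $d_2$-compactness of the trajectory to extract a $d_2$-convergent subsequence $y(t_{n_k})\to y^*$, note that the convergence then also holds in $d_1$ because the $d_2$-topology is finer, and play the two distance functions against each other. The step you flag as the crux is genuinely the weak point, and it is equally present in the paper's own proof: from $d_1(y(t_{n_k}),y^*)\to 0$ and \eqref{eq:curveconv} the paper concludes outright that $y^*\in Y^*$, whereas the triangle inequality only yields $d_1(y^*,Y^*)=0$, i.e.\ membership in the $d_1$-closure of $Y^*$. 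Your diagnosis that one must additionally assume $Y^*$ is $d_1$-closed (or replace $Y^*$ by its closure in the conclusion) is correct, and without it the lemma as stated can fail: take $Y=\set{0}\cup\set{1/n}\cup\set{-1/n}$, $d_1$ the Euclidean metric, $d_2(x,y)=\abs{f(x)-f(y)}$ with $f(0)=0$, $f(1/n)=1/n$, $f(-1/n)=2+1/n$ (so $d_2$ is finer), $Y^*=\set{-1/n}$, and a curve whose range is the $d_2$-compact set $\set{0}\cup\set{1/n}$ tending to $0$; then $d_1(y(t),Y^*)\to 0$ while $d_2(y(t),Y^*)\geq 1$ for all $t$. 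In every application in the paper the target set is closed (an $\omega$-limit set, a singleton, or a set of translates of a fixed measure), so your patched version is the one actually being used; with that hypothesis added, your proof is correct and complete.
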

\begin{proof}
Let us assume by contradiction that we can find a sequence $(y(t_n))_{n \in \N}$, $t_n \to \infty$, such that
\begin{align}
\liminf_{n \to \infty} d_2(y(t_n),Y^*) \geq \eps \, ,
\end{align} 
for some $\eps>0$. By the compactness of $\bigcup_{t \geq 0}\set{y(t)}$, it follows that there exists a subsequence $(y(t_{n_k}))_{k \in \N}$ and a $y^* \in Y$ such that
\begin{align}
\lim_{k \to \infty}d_2(y(t_{n_k}),y^*) =0 \, .
\end{align}
It follows that
\begin{align}
\lim_{k \to \infty}d_1(y(t_{n_k}),y^*) =0 \, .
\end{align}
This implies, from~\eqref{eq:curveconv}, that $y^* \in Y^*$. This is a contradiction and so the result of the lemma follows.
\end{proof}

\section{Proof of the main abstract result}
\label{sec:omegalimit}
We prove~\cref{thm:omegastat} after a few lemmata. Using this result, we then prove~\cref{thm:converge,thm:converge'}. Throughout this section, we use the same notation as in~\cref{thm:omegastat}; we fix $\rho_0\in Z_{E,p}$ and a curve of maximal slope $\rho\in AC([0,\infty);\cP_p(X))$ with initial condition $\rho(0) = \rho_0$.
\begin{lemma}
\label{lemma:trajrelcpct}
We take the set of assumptions~\eqref{ass1}, ~\eqref{ass2} (or~\eqref{ass2subset}), and ~\eqref{ass3}. We have that the trajectory $\bigcup_{t \geq 0}\set{\rho(t)}$ is relatively compact in $\cP_p(X)$ with respect to the $\sigma$-topology.
\end{lemma}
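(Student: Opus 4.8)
The plan is to exploit the Lyapunov (energy-decreasing) structure of curves of maximal slope to trap the entire trajectory inside a single sublevel set of $E$, and then to invoke the relative compactness of sublevel sets guaranteed by~\eqref{ass2} (or~\eqref{ass2subset}). Relative compactness passes to subsets, so once the trajectory is confined to a relatively compact sublevel set we are done.

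Concretely, I would proceed as follows. First, by the very definition of a $p$-curve of maximal slope (\cref{defn:maxslope}), the map $t \mapsto E(\rho(t))$ agrees a.e.\ with a non-increasing function and, in particular, decreases along the flow; since $\rho(0) = \rho_0 \in Z_{E,p}$ we have $E(\rho_0) < \infty$, and hence $E(\rho(t)) \leq E(\rho_0) =: C < \infty$ for every $t \geq 0$. Second, this bound is exactly the statement that $\bigcup_{t \geq 0}\set{\rho(t)} \subseteq L_{\leq C}(E)$. Third, \eqref{ass2} asserts that $L_{\leq C}(E)$ is relatively compact in the $\sigma$-topology; since the $\sigma$-closure of the trajectory is a closed subset of the compact set $\overline{L_{\leq C}(E)}^{\,\sigma}$, it is itself compact, which gives the claim. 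In the variant where only~\eqref{ass2subset} is available, I would use that $\cY$ is chosen precisely so that $\bigcup_{t \geq 0}\set{\rho(t)} \subseteq \cY$; the trajectory therefore lies in $\set{\rho \in \cP_p(X) \cap \cY : E(\rho) \leq C}$, which~\eqref{ass2subset} declares relatively compact, and the same subset argument applies.

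The only genuinely delicate point is the first step: upgrading the statement ``$E \circ \rho$ equals a.e.\ a non-increasing map'' to a bound $E(\rho(t)) \leq C$ valid at \emph{every} $t \geq 0$, including the exceptional null set of times. Here I would use the $\sigma$-lower semicontinuity of $E$ from~\eqref{ass1} together with the $W_p$-continuity (hence $\sigma$-continuity) of the absolutely continuous curve $\rho$: for an exceptional time $t$, pick good times $s_n \to t$ with $\rho(s_n) \overset{\sigma}{\to} \rho(t)$, so that $E(\rho(t)) \leq \liminf_n E(\rho(s_n))$, and the right-hand side is controlled by the finite, non-increasing representative evaluated along the $s_n$. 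It is worth noting that~\eqref{ass3} (lower semicontinuity of $G$) plays no role in this lemma; it will instead be needed later, to identify the limit points of the trajectory as (weak) stationary states.
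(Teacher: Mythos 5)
Your proof is correct and follows essentially the same route as the paper: trap the trajectory in the sublevel set $L_{\leq E(\rho_0)}(E)$ (or its intersection with $\cY$) via the monotonicity of $t \mapsto E(\rho(t))$ coming from~\eqref{eq:maxslope}, then invoke~\eqref{ass2} or~\eqref{ass2subset}. Your additional remark on upgrading the a.e.\ monotonicity to an everywhere bound via the lower semicontinuity of $E$ and the continuity of the curve is a point the paper passes over silently, and your observation that~\eqref{ass3} is not actually used here is accurate.
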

\begin{proof}
According to equation~\eqref{eq:maxslope}, since the right-hand side is non-positive, we have that $E(\rho(t))$ is a non-increasing function of $t$. By $E(\rho_0)<\infty$, we have that $E(\rho(t)) \leq E(\rho_0) < \infty$ for all $t\geq 0$. Thus,
\begin{align}
\bigcup_{t \geq 0} \set{\rho(t)} \subset \set{\rho \in \cP_p(X) : E(\rho) \leq E(\rho_0) } \, ,
\end{align}
or in the case of~\eqref{ass2subset},
\begin{align}
\bigcup_{t \geq 0} \set{\rho(t)} \subset \set{\rho \in \cP_p(X) \cap \mathcal{Y} : E(\rho) \leq E(\rho_0) } \, .
\end{align}
It follows by~\eqref{ass2} or~\eqref{ass2subset}, that $\bigcup_{t \geq 0}\set{\rho(t)}$ is relatively compact in $\cP_p(X)$ with respect to the $\sigma$-topology.
\end{proof}
~\cref{lemma:trajrelcpct} allows us to extract a time-diverging sequence $t_n\to\infty$ and probability measure $\rho^*\in\cP_p(X)$ such that $\rho(t_n) \overset{\sigma}{\to} \rho^*$ in $\cP_p(X)$. So we have established the
\begin{corollary}
\label{corollary:nonempty}
Under the set of assumptions~\eqref{ass1}, ~\eqref{ass2} (or~\eqref{ass2subset}), ~\eqref{ass3}; we have $\omega^\rho(\rho_0) \neq \emptyset$.
\end{corollary}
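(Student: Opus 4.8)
The plan is to read off non-emptiness directly from the relative compactness of the trajectory established in \cref{lemma:trajrelcpct}, exploiting that the $\sigma$-topology on $\cP_p(X)$ is metrisable. First I would fix a concrete diverging sequence of times, say $t_n = n \to \infty$, and regard $(\rho(t_n))_{n \in \N}$ as a sequence contained in the set $\bigcup_{t \geq 0}\set{\rho(t)}$.

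Next I would invoke \cref{lemma:trajrelcpct}: under assumptions~\eqref{ass1},~\eqref{ass2} (or~\eqref{ass2subset}),~\eqref{ass3}, this trajectory set is relatively compact in $\cP_p(X)$ with respect to the $\sigma$-topology. Because the $\sigma$-topology is metrised by $d_\sigma$ (the restriction of the L\'evy--Prokhorov metric to $\cP_p(X)$), relative compactness coincides with sequential relative compactness. Consequently I can extract a subsequence $(t_{n_k})_{k \in \N}$ together with a limit $\rho^* \in \cP_p(X)$ such that $\rho(t_{n_k}) \overset{\sigma}{\to} \rho^*$ as $k \to \infty$.

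To conclude, I would observe that $t_{n_k} \to \infty$, so the pair $(t_{n_k})_k$ and $\rho^*$ verify precisely the condition in \cref{defn:omegalimit} defining membership in $\omega^\rho(\rho_0)$; hence $\rho^* \in \omega^\rho(\rho_0)$ and the set is non-empty. There is no serious obstacle here, since the statement is essentially a repackaging of \cref{lemma:trajrelcpct}; the only points needing care are the passage from topological to sequential compactness, which is exactly why the metrisability of the $\sigma$-topology is used, and the verification that the limit $\rho^*$ lies in $\cP_p(X)$ rather than merely in the ambient space $\cP(X)$ — this is automatic because the relative compactness in \cref{lemma:trajrelcpct} is asserted within the metric space $(\cP_p(X), d_\sigma)$ itself.
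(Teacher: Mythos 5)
Your proposal is correct and follows essentially the same route as the paper, which also deduces the corollary directly from \cref{lemma:trajrelcpct} by extracting a $\sigma$-convergent subsequence along a diverging sequence of times and noting that the limit belongs to $\omega^\rho(\rho_0)$ by \cref{defn:omegalimit}. The additional remarks on metrisability and on the limit lying in $\cP_p(X)$ are consistent with the paper's setup and add nothing that conflicts with its argument.
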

~\cref{lemma:trajrelcpct} also has a generalisation to the relaxed $\omega$-limit sets.
\begin{lemma}
\label{lem:relaxedcpct}
We take the set of assumptions~\eqref{ass1'}, ~\eqref{ass2'} (or~\eqref{ass2'subset}), and~\eqref{ass3'}. We have that the trajectory $\bigcup_{t \geq 0}\set{\rho(t)}$ is relatively compact in $\cP(X)$.
\end{lemma}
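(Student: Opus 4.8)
The plan is to mirror the proof of~\cref{lemma:trajrelcpct} almost verbatim, replacing the role of the sublevel sets of $E$ in $\cP_p(X)$ by those of the extension $\tilde{E}$ in $\cP(X)$. The only genuinely new point is to transfer the energy monotonicity, which is a statement about $E$ along the curve, into the corresponding statement about $\tilde{E}$.

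First I would recall that $\rho$ is a $p$-curve of maximal slope for $E$ with respect to $G$ in the sense of~\cref{defn:maxslope}, so that~\eqref{eq:maxslope} holds. Since its right-hand side is non-positive, $t \mapsto E(\rho(t))$ is non-increasing, and because $\rho_0 \in Z_{E,p}$ we have $E(\rho(t)) \le E(\rho_0) < \infty$ for every $t \ge 0$. Next I would invoke the defining property of the extension: as $\rho \in AC([0,\infty); \cP_p(X))$, the whole trajectory lies in $\cP_p(X)$, where $\tilde{E}$ agrees with $E$. Hence $\tilde{E}(\rho(t)) = E(\rho(t)) \le E(\rho_0) = \tilde{E}(\rho_0) < \infty$ for all $t \ge 0$, which places the trajectory in a single sublevel set of $\tilde{E}$,
\[
\bigcup_{t \ge 0}\set{\rho(t)} \subset L_{\le E(\rho_0)}(\tilde{E}) = \set{\rho \in \cP(X) : \tilde{E}(\rho) \le E(\rho_0)},
\]
or, in the case of~\eqref{ass2'subset}, in $\set{\rho \in \cP_p(X) \cap \cY : E(\rho) \le E(\rho_0)}$, using that $\bigcup_{t \ge 0}\set{\rho(t)} \subseteq \cY$ by hypothesis. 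Applying~\eqref{ass2'} (or~\eqref{ass2'subset}) then yields that this sublevel set, and hence the trajectory, is relatively compact in $\cP(X)$, as desired.

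The only step requiring care, and the one I expect to be the single subtle point rather than a genuine obstacle, is the identification $\tilde{E}(\rho(t)) = E(\rho(t))$. The curve of maximal slope is defined with respect to $E$ and $G$ on $\cP_p(X)$, not with respect to $\tilde{E}$ and $\tilde{G}$, so the monotonicity coming from~\eqref{eq:maxslope} is a priori only available for $E$. This is harmless precisely because the trajectory never leaves $\cP_p(X)$, where the two functionals coincide by the definition of the extension; it is only the limit points, which will later be extracted in $\cP(X)$, that may escape $\cP_p(X)$. In particular, no compactness in the finer $\sigma$-topology is claimed or needed at this stage, and the argument is otherwise identical to the non-relaxed case.
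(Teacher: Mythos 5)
Your proposal is correct and follows essentially the same route as the paper: monotonicity of $E$ along the curve from~\eqref{eq:maxslope}, containment of the trajectory in a sublevel set, and then~\eqref{ass2'} (or~\eqref{ass2'subset}) to conclude relative compactness in $\cP(X)$. The only difference is that you make explicit the identification $\tilde{E}(\rho(t))=E(\rho(t))$ on $\cP_p(X)$, which the paper's proof uses implicitly; this is a harmless and indeed welcome clarification.
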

\begin{proof}
As in the proof of~\cref{lemma:trajrelcpct}, we can use equation~\eqref{eq:maxslope} to deduce the bound $E(\rho(t)) \leq E(\rho_0) < \infty$ for all $t\geq 0$.  
\begin{align}
\bigcup_{t \geq 0} \set{\rho(t)} \subset \set{\rho \in \cP_p(X) : E(\rho) \leq E(\rho_0) } \, ,
\end{align}
or in the case of~\eqref{ass2'subset},
\begin{align}
\bigcup_{t \geq 0} \set{\rho(t)} \subset \set{\rho \in \cP_p(X) \cap \cY : E(\rho) \leq E(\rho_0) } \, .
\end{align}
It follows by~\eqref{ass2'} or~\eqref{ass2'subset}, that $\bigcup_{t \geq 0}\set{\rho(t)}$ is relatively compact in $\cP(X)$.
\end{proof}
\begin{corollary}
\label{corollary:relaxednonempty}
Under the set of assumptions~\eqref{ass1'}, ~\eqref{ass2'} (or~\eqref{ass2'subset}), ~\eqref{ass3'}; we have $\omega'^\rho(\rho_0) \neq \emptyset$.
\end{corollary}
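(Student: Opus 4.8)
The plan is to read this off directly from \cref{lem:relaxedcpct}, in exactly the same way that \cref{corollary:nonempty} was deduced from \cref{lemma:trajrelcpct}. First I would fix any diverging sequence of times, say $t_n = n \to \infty$, and consider the sequence of measures $(\rho(t_n))_{n \in \N}$. By \cref{lem:relaxedcpct}, under the assumptions~\eqref{ass1'}, \eqref{ass2'} (or~\eqref{ass2'subset}), and~\eqref{ass3'}, the full trajectory $\bigcup_{t \geq 0}\set{\rho(t)}$ is relatively compact in $\cP(X)$; in particular the sequence $(\rho(t_n))_{n \in \N}$ lies in a relatively compact subset of $\cP(X)$.

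Next I would use that the narrow topology on $\cP(X)$ is metrisable, for instance by the L\'evy--Prokhorov metric $d_{LP}(\cdot,\cdot)$ recalled in~\cref{sec:prelim}. On a metrisable space, relative compactness coincides with sequential relative compactness, so I can extract a subsequence $(t_{n_k})_{k \in \N}$ --- which still satisfies $t_{n_k} \to \infty$ --- together with a limit $\bar{\rho} \in \cP(X)$ such that $\rho(t_{n_k}) \to \bar{\rho}$ in $\cP(X)$ as $k \to \infty$. By the very definition of the relaxed $\omega$-limit set, any such $\bar{\rho}$ belongs to $\omega'^\rho(\rho_0)$, which is therefore non-empty.

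I do not expect any genuine obstacle here: the entire substance of the statement is contained in the compactness result of \cref{lem:relaxedcpct}, and the corollary is simply its translation into the language of $\omega$-limit sets. The only point requiring a word of care is the passage from topological relative compactness to the existence of a convergent subsequence, which is legitimate precisely because $(\cP(X), d_{LP})$ is a metric space; this is the same (trivial) subtlety already implicit in the remark preceding \cref{corollary:nonempty}.
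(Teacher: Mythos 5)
Your proposal is correct and follows exactly the paper's (implicit) argument: the corollary is deduced from the relative compactness of the trajectory in $\cP(X)$ established in \cref{lem:relaxedcpct}, by extracting a narrowly convergent subsequence along a diverging sequence of times, just as \cref{corollary:nonempty} is read off from \cref{lemma:trajrelcpct}. Your extra remark on metrisability of the narrow topology justifying the passage to sequential compactness is a sensible point of care but does not change the substance.
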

With these basic properties of $\omega^\rho(\rho_0)$ and $\omega'^\rho(\rho_0)$, we now turn to the proof of~\cref{thm:omegastat}.
\begin{proof}[Proof of~\cref{thm:omegastat}]
We only prove the first containment $\omega^\rho(\rho_0) \subset \mathcal{E}$ under the set of assumptions~\eqref{ass1}, ~\eqref{ass2} (or~\eqref{ass2subset}), ~\eqref{ass3}. Fix $\rho^*\in\omega(\rho_0):= \omega^\rho(\rho_0)$ obtained as a limit from some time-diverging sequence $t_n\to \infty$ where $\rho(t_n)\overset{\sigma}{\to} \rho^*$. It suffices to show $G(\rho^*) = 0$ by ~\cref{lemma:equil}. The strategy is to construct a limit curve $\rho^\infty:[0,1]\to\cP_p(X)$ for (a further subsequence of) $\rho(t_n)$ coming from gradient flow theoretic compactness properties of equation~\eqref{eq:maxslope} and then to firstly deduce that $G(\rho^\infty(t)) = 0$ for all $t \in [0,1]$. By showing that  $\rho^\infty(0) = \rho^*$, we conclude the proof.

We define $\rho^n\in AC([0,1];\cP_p(X))$ where $\rho^n(t) = \rho(t_n+t), \, t\in[0,1]$. We will prove compactness of this sequence $\rho^n$ based on a generalised Arzel\`a-Ascoli result. By a similar argument to ~\cref{lemma:trajrelcpct}, for every $t\in[0,1]$, $\overline{\{\rho^n(t)\}_{n\in\mathbb{N}}}^\sigma \subset \{\rho\in\cP_p(X) \, : \, E(\rho) \leq E(\rho_0)\}$ which is compact by assumption~\eqref{ass2} or~\eqref{ass2subset}. Fix $0 \leq s < t \leq 1$. By construction and definition of the metric derivative (cf.~\cref{def:md}), we have for every $n\in\mathbb{N}$
\[
W_p^p(\rho^n(t),\rho^n(s)) = W_p^p(\rho(t_n+t),\rho(t_n+s)) \leq \left(
\int_{t_n+s}^{t_n+t}\abs{\rho'}(u)\frac{\dx{u}}{t-s}
\right)^p(t-s)^p \, .
\]
Applying Jensen's inequality, we estimate further
\[
W_p^p(\rho^n(t),\rho^n(s)) \leq \left(
\int_{t_n+s}^{t_n+t}\abs{\rho'}^p(u)\dx{u}
\right)|t-s|^{p-1} \leq C\abs{t-s}^{p-1} \, ,
\]
where $C$ can be taken to be $\int_0^\infty\abs{\rho'}^p(u)\dx{u} < \infty$ due to the absolute continuity and maximal slope property of the curve and we stress that this constant is independent of $n,s,t$. Finally, we take the $p$-th root of the above inequality to obtain
\begin{equation}
\label{eq:equicty}
W_p(\rho^n(t),\rho^n(s)) \leq C\abs{t-s}^\frac{p-1}{p},
\tag{$\dagger$}
\end{equation}
where we have abused notation by recycling $C$ for the constant. Incidentally, estimate~\eqref{eq:equicty} is precisely the reason we have excluded the values $p=1,\infty$. Applying~\cite[Proposition 3.3.1]{AGS08}, we can find a further subsequence which we omit and a limit curve $\rho^\infty \in C([0,1];\cP_p(X))$ such that pointwise $t\in[0,1]$
\[
\rho(t_n+t)=\rho^n(t) \overset{\sigma}{\to} \rho^\infty(t) \text{ in }\cP_p(X).
\]
It follows then that $\rho^\infty(0)=\rho^*$. Additionally from~\eqref{ass3}, it follows that
\begin{align}
G^q(\rho^\infty(t)) \leq \liminf_{n \to \infty}G^q(\rho^n(t)) \, .
\end{align}
By Fatou's lemma, we then have
\begin{equation}
\label{eq:Gq01}
\int_0^1 G^q(\rho^\infty(t)) \dx{t} \leq \liminf_{n \to \infty}\int_0^1 G^q(\rho^n(t)) \dx{t} \, .
\tag{G$\infty$}
\end{equation}
Returning to equation~\eqref{eq:maxslope}, we integrate in time to obtain, for a.e. $t\in(0,\infty)$,
\[
E(\rho(t))-E(\rho(0)) = -\frac{1}{p}\int_0^t\abs{\rho'}^p(s)\dx{s} - \frac{1}{q}\int_0^tG^q(\rho(s))\dx{s}.
\]
Reversing the sign, applying the lower bound assumption~\eqref{ass1}, and dropping the metric derivative term, we arrive at the bound
\[
\int_0^t G^q(\rho(s))\dx{s} \leq C.
\]
The universal constant $C$ depends only on the lower bound of $E$, the initial value $E(\rho(0))$, and $q$. Therefore, sending $t\to \infty$, we have the integrability of $G^q(\rho(t))$
\begin{equation}
\label{eq:GqL1}
\int_0^\infty G^q(\rho(t))\dx{t} < \infty.
\tag{Gq}
\end{equation}
From the above estimate~\eqref{eq:Gq01} and the full integrability of $G^q(\rho(t))$ in~\eqref{eq:GqL1}, we have that
\begin{align}
\int_0^1 G^q(\rho^\infty(t)) \dx{t} &\leq \liminf_{n \to \infty}\int_0^1 G^q(\rho^n(t)) \dx{t} = \liminf_{n \to \infty}\int_{t_n}^{t_n+1} G^q(\rho(t)) \dx{t} =0 \, .
\end{align}
Since $G^q(\mu) \geq 0$ for all $\mu \in \cP_p(X)$, it follows that $G^q(\rho^\infty(t))=0$ a.e.
$t \in[0,1]$. Thus, there exists a sequence of times $t_m \to 0$ such that $G^q(\rho^\infty(t_m))=0$.
Additionally, since $\rho^\infty \in C([0,1]; \cP_p(X))$, $\rho^\infty(t_m) \overset{\sigma}{\to} \rho^*$ in $\cP_p(X)$ as
$t_m \to 0$. Applying \eqref{ass3} again, we obtain
\begin{align}
G^q(\rho^*) \leq \liminf_{t_m \to 0} G^q(\rho^\infty(t_m))=0 \, .
\end{align}
By~\cref{lemma:equil}, the result follows. The proof of the other containment $\omega'^\rho(\rho_0) \subset \mathcal{E}_w$ under the assumptions~\eqref{ass1'}, ~\eqref{ass2'} (or~\eqref{ass2'subset}), \eqref{ass3'} is exactly the same with appropriate modifications.
\end{proof}
We now have all the ingredients to prove~\cref{thm:converge,thm:converge'}.
\begin{proof}[Proof of~\cref{thm:converge}]
From the first part of~\cref{thm:omegastat}, it suffices to show
\[
\lim_{t\to\infty}d_\sigma(\rho(t),\omega^\rho(\rho_0)) = \lim_{t\to\infty}\inf_{\mu\in\omega^\rho(\rho_0)}d_\sigma(\rho(t),\mu) = 0.
\]
This is essentially identical to the proof of~\cref{thm:ch}. For the convenience of the reader, we redo the proof in our setting.
Assume for a contradiction that this is not true, so there exists a $\delta>0$ and a time-diverging sequence $t_n\to \infty$ as $n\to\infty$ such that
\[
\inf_{\mu\in\omega^\rho(\rho_0)}d_\sigma(\rho(t_n),\mu)) \geq \delta, \quad \textrm{ for all } n\in\mathbb{N}.
\]
Apply~\cref{lemma:trajrelcpct} to find $\rho^*\in\cP_p(X)$ such that, up to subsequence,
\[
d_\sigma(\rho(t_n),\rho^*) \to 0, \quad \text{as } n\to\infty.
\]
This is in contradiction with the previous lower bound since $\rho^*\in \omega^\rho(\rho_0)$. Finally, if~\eqref{ass2subset} is satisfied and $\cY$ is closed on $\cP_p(X)$, it follows that $\omega^\rho(\rho_0) \subset \cY$.
\end{proof}
\begin{proof}[Proof of~\cref{thm:converge'}]
This proof follows the same steps of the previous proof using the second part of~\cref{thm:omegastat}. Here, one needs to replace $d_\sigma, \omega^\rho(\rho_0)$, and~\cref{lemma:trajrelcpct} with $d_{LP}, \omega'^\rho(\rho_0)$, and~\cref{lem:relaxedcpct}, respectively.
\end{proof}
We conclude this section with a result on the limiting behaviour of the energy functional $E$ following similar arguments to~\cite[Theorem 9.2.3]{CH98}. 
\begin{proposition}
For $\rho_0\in Z_{E,p}$ and $\rho\in AC([0,\infty);\cP_p(X))$ the corresponding curve of maximal slope. Assume assumptions assumptions~\eqref{ass1}, \eqref{ass2} (or~\eqref{ass2subset}), and~\eqref{ass3} are satisfied. Then, we have
\begin{tenumerate}
    \item The limit $E_\infty:=\lim\limits_{t\to\infty}E(\rho(t)) = \inf_{t>0}E(\rho(t))$ exists. \label{prop:entlimit!a}
    \item For every $\rho^*\in\omega^\rho(\rho_0)$, we have $E(\rho^*) \leq E_\infty$. In particular, if 
    $$
    E_\infty = \inf_{\rho \in \mathcal{P}_p(X)}E(\rho) \, ,
    $$ 
    then every $\rho^*\in\omega^\rho(\rho_0)$ attains the infimum of $E$ over $\cP_p(X)$.  \label{prop:entlimit!b}
    \item If $E$ is continuous on $\cP_p(X)$ with respect to the $\weaker$-topology, then
    $$
    E(\rho)=E_\infty \, ,
    $$
    for all $\rho \in \omega^\rho(\rho_0)$. \label{prop:entlimit!c}
    \item If $\cE$ is a finite set, then $\omega^\rho(\rho_0)$  is a singleton $\set{\rho^*}$  and 
    $$
    \lim_{t\to \infty} d_\sigma(\rho(t),\rho^*)=0 \,.\label{prop:entlimit!d}
    $$
\end{tenumerate}
\label{prop:entlimit}
\end{proposition}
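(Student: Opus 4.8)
The plan is to treat the four parts in order, the first three being soft consequences of monotonicity and (semi)continuity and the last requiring a genuine topological argument. For~\ref{prop:entlimit!a}, I would recall from~\cref{lemma:trajrelcpct} (equivalently, from the non-positivity of the right-hand side of~\eqref{eq:maxslope}) that $t \mapsto E(\rho(t))$ is non-increasing; since $E$ is bounded below by~\eqref{ass1}, a non-increasing function bounded below converges to its infimum, which yields $E_\infty = \lim_{t\to\infty}E(\rho(t)) = \inf_{t>0}E(\rho(t))$ immediately. For~\ref{prop:entlimit!b}, fix $\rho^* \in \omega^\rho(\rho_0)$ and a sequence $t_n \to \infty$ with $\rho(t_n)\overset{\weaker}{\to}\rho^*$; the $\weaker$-lower semicontinuity of $E$ from~\eqref{ass1} together with part~\ref{prop:entlimit!a} gives
\[
E(\rho^*) \le \liminf_{n\to\infty}E(\rho(t_n)) = E_\infty .
\]
If moreover $E_\infty = \inf_{\cP_p(X)}E$, then $\rho^* \in \cP_p(X)$ forces $E(\rho^*) \ge \inf_{\cP_p(X)}E = E_\infty$, hence equality, so $\rho^*$ attains the infimum. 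Part~\ref{prop:entlimit!c} is the same computation with lower semicontinuity replaced by continuity: along such $t_n$ one now gets $E(\rho^*) = \lim_{n\to\infty}E(\rho(t_n)) = E_\infty$.

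The real content is~\ref{prop:entlimit!d}. By~\cref{thm:omegastat} we have $\omega^\rho(\rho_0) \subset \cE$, and by~\cref{corollary:nonempty} the set $\omega^\rho(\rho_0)$ is non-empty; since $\cE$ is finite we may write $\omega^\rho(\rho_0) = \set{\mu_1,\dots,\mu_k}$ with $1 \le k < \infty$, and it remains only to show $k = 1$. I would argue by contradiction: assuming $k \ge 2$, set $3\delta := \min_{i \ne j} d_\sigma(\mu_i,\mu_j) > 0$. The proof of~\cref{thm:converge} in fact establishes $d_\sigma(\rho(t),\omega^\rho(\rho_0)) \to 0$, so there is a $T$ with $\rho(t) \in \bigcup_{i=1}^k B(\mu_i,\delta)$ for all $t \ge T$, where $B(\mu_i,\delta)$ denotes the open $d_\sigma$-ball of radius $\delta$ about $\mu_i$; by the choice of $\delta$ these balls are pairwise disjoint. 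The decisive point is that $t \mapsto \rho(t)$ is $d_\sigma$-continuous: the curve lies in $AC([0,\infty);\cP_p(X))$ and is therefore $W_p$-continuous, while the $\weaker$-topology is coarser than the $W_p$-topology. Consequently $\rho([T,\infty))$ is connected and contained in a disjoint union of open balls, so it lies entirely inside a single $B(\mu_{i_0},\delta)$. But then $d_\sigma(\rho(t),\mu_j) > 2\delta$ for every $j \ne i_0$ and all $t \ge T$, so no such $\mu_j$ can be a $\weaker$-limit of $\rho(t_n)$ along a diverging time sequence, contradicting $\mu_j \in \omega^\rho(\rho_0)$. Hence $k = 1$, $\omega^\rho(\rho_0) = \set{\rho^*}$, and $d_\sigma(\rho(t),\rho^*) = d_\sigma(\rho(t),\omega^\rho(\rho_0)) \to 0$ follows from~\cref{thm:converge}.

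I expect the connectedness step in~\ref{prop:entlimit!d} to be the main obstacle: the crux is to upgrade ``the trajectory accumulates only at finitely many separated points'' to ``the trajectory is eventually trapped in a neighbourhood of a single one of them,'' and this is precisely where the continuity of the curve in the coarse $\weaker$-topology must be combined with the isolation of the points of $\cE$ furnished by finiteness. Parts~\ref{prop:entlimit!a}--\ref{prop:entlimit!c} carry no real difficulty.
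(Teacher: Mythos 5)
Your proposal is correct, and parts (a)--(c) follow the paper's proof essentially verbatim: monotonicity of $t \mapsto E(\rho(t))$ from~\eqref{eq:maxslope} plus the lower bound in~\eqref{ass1} for (a), lower semicontinuity along a $\weaker$-convergent subsequence for (b), and the convergence of~\cref{thm:converge} combined with continuity for (c). The only genuine divergence is in part (d). The paper's argument is shorter: it invokes~\cref{thm:omegastat} to get $\omega^\rho(\rho_0) \subset \cE$, hence finiteness, and then cites the abstract result~\cref{thm:ch} to assert that $\omega^\rho(\rho_0)$ is connected; a finite connected set in a metric space is a singleton, and the convergence then follows from~\cref{thm:converge}. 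You instead re-derive the relevant consequence of connectedness from scratch: the tail $\rho([T,\infty))$ is a connected set (continuous image of an interval) trapped in a disjoint union of $\delta$-balls around the finitely many accumulation points, hence trapped in a single ball, which excludes all but one point from the $\omega$-limit set. Both routes are valid and rest on the same topological fact. What your version buys is that it makes explicit the one hypothesis the paper's citation quietly relies on, namely that $t \mapsto \rho(t)$ is continuous for the $\weaker$-topology (because the curve is $W_p$-continuous and $\weaker$ is coarser); this is exactly what makes the sets $\overline{\bigcup_{t \geq s}\{\rho(t)\}}$ connected in the proof of~\cref{thm:ch}, a point the paper leaves implicit. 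What the paper's version buys is brevity and reuse of the already-established dynamical-systems lemma. Either write-up would be acceptable.
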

\begin{proof}
Based on~\cref{eq:maxslope}, we know that $t\mapsto E(\rho(t))$ is a non-increasing function. Based on assumption~\eqref{ass1}, this function is also bounded below so its limit exists which gives~\ref{prop:entlimit!a}.

For~\ref{prop:entlimit!b}, we take a fixed $\rho^*\in \omega^\rho(\rho_0)$, so there exists a time-diverging sequence $t_n \to \infty$ as $n\to \infty$ such that $\rho(t_n) \overset{\sigma}{\to} \rho^*$. By the l.s.c. property in ~\eqref{ass1}, we have
\[
E(\rho^*) \leq \liminf_{n\to\infty}E(\rho(t_n)) = \inf_{t>0}E(\rho(t)).
\]
The proof of~\ref{prop:entlimit!c} is a straightforward consequence of the convergence in~\cref{thm:converge} and the continuity of $E$. For~\ref{prop:entlimit!d}, we note from~\cref{thm:omegastat} that $\omega^\rho(\rho_0) \subset \cE$. Thus, $\omega^\rho(\rho_0)$ is finite and by~\cref{thm:ch} it is connected. It follows that it must be a singleton. The convergence follows from~\cref{thm:converge}. 
\end{proof}
\begin{remark}
We note that an analogous result to~\cref{prop:entlimit} can be derived for the relaxed $\omega$-limit set $\omega'^\rho(\rho_0)$ under assumptions~\eqref{ass1'}, \eqref{ass2'} (or~\eqref{ass2'subset}), and~\eqref{ass3'}. 
\end{remark}
\section{Applications to specific PDE}
\label{sec:examples}
This section is devoted to applying our previous results to a wide range of gradient flows. Many of these examples have been studied extensively in the literature; we demonstrate how our abstract theory recovers and even extends some of the results. We emphasise Section~\ref{sec:fplog} for a Fokker-Planck equation with weak confining potential in which the curve belongs to $\cP_p(X)$ for all $t>0$ while its steady state belongs in $\cP(X)\setminus \cP_p(X)$ and Section~\ref{sec:nonuniq} for a pure aggregation equation in which non-unique curves converge to disjoint subsets of the set of stationary states. In Section~\ref{sec:kuramoto}, we examine a one dimensional noisy Kuramoto model with no confining potential although the compactness is compensated by restricting to the torus. Section~\ref{sec:MV} discusses the asymptotics of the McKean-Vlasov equation, that is general interaction potential with linear diffusion to get some asymptotic results even in the presence of phase transitions. We generalise the diffusion term to porous medium type in Section~\ref{sec:aggdiff} with a complete description in the radial symmetric case. In Section~\ref{sec:degdiff}, we turn to $q$-Wasserstein gradient flows for $q\ne 2$ on the torus extending the convergence results of~\cite{Agu03}. In Section~\ref{sec:pme}, the porous medium equation is investigated to recover the well-known result of convergence to the Barenblatt profile. Removing diffusion entirely, we study a pure aggregation equation on $\R$ in Section~\ref{sec:gpy}, providing convergence results using our theory and classical techniques to complement the results of~\cite{GPY17} on the torus.
\subsection{Fokker--Planck equation with weak confinement}
\label{sec:fplog}
In this section, we study the Fokker--Planck equation on $\R^d$ with gradient drift which can be written as
\begin{align}
\begin{cases}
\partial_t \rho = \Delta  \rho + \nabla\cdot(\rho\nabla V) \\
\rho(0)= \rho_0 \in \cP_2(\R^d) \, 
\end{cases}
 ,\label{eq:sfp}
\end{align}
where we choose $V$ to be slowly growing at infinity as
\begin{align}
V(x)= \log (1 + \abs{x}^{2d}) \, ,
\end{align}
see \cite{TV00,AMTU01} for related linear Fokker-Planck equations. Equation~\eqref{eq:sfp} is the forward Kolmogorov equation associated to the following It\^o diffusion process:
\begin{align}
\begin{cases}
\dx{X_t} = -\nabla V(X_t) \dx{t} + \sqrt{2}\dx{B_t} \, \\
\textrm{Law}(X_0) = \rho_0 \in \cP_2(\R^d)
\end{cases}
\end{align}
where $B_t$ is a $d$-dimensional Wiener process. Thus, $\textrm{Law}(X_t)=\rho(t)$, where $\rho(t)$ is the unique weak solution of~\eqref{eq:sfp}. As mentioned in the introduction, following the seminal work in~\cite{JKO98},~\eqref{eq:sfp} can be viewed as gradient flow of the free energy, $E_{FP}: \cP_2(\R^d) \to (-\infty,+\infty]$,
\begin{align}
E_{FP}(\rho)= \int_{\R^d} \rho \log \rho \dx{x} + \int_{\R^d}\log (1+ \abs{x}^{2d}) \dx{\rho}(x) \, ,
\end{align}
with respect to the weak upper gradient $G_{FP}: \cP_2(\R^d) \to (-\infty,+\infty]$,
\begin{align}
G_{FP}(\rho)= \bra*{\int_{\R^d} \abs*{\nabla \log(\rho(1 + \abs{x}^{2d}) )  }^2 \dx{\rho}(x)}^{1/2} \, .
\end{align}
Indeed, we have the following  result
\begin{proposition}
For all $\rho_0 \in Z_{E_{FP},2}$, there exists a unique $2$-curve of maximal slope $\rho \in AC([0,\infty);\cP_2(\R^d))$ of the energy $E_{FP}$ with respect to the weak upper gradient $G_{FP}$ such that $\rho(0)=\rho_0 \in \cP_2(\R^d)$. Furthermore, any such $2$-curve of maximal is equivalent to a weak solution of~\eqref{eq:sfp}.
\label{prop:statfp}
\end{proposition}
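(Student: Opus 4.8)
The plan is to recognise $E_{FP}$ as a $\lambda$-geodesically convex functional on $(\cP_2(\R^d),W_2)$ and then to invoke the existence, uniqueness, and gradient-flow-identification theory of~\cite{AGS08} for such functionals. As a preliminary step I would identify $G_{FP}$ with the metric slope $\abs{\partial E_{FP}}$ of $E_{FP}$: writing $V(x)=\log(1+\abs{x}^{2d})$, the first variation is $\tfrac{\delta E_{FP}}{\delta\rho}=\log\rho+1+V$, so that $\nabla\tfrac{\delta E_{FP}}{\delta\rho}=\nabla\log\rho+\nabla V=\nabla\log\bra*{\rho(1+\abs{x}^{2d})}$ and hence $\bra*{\int_{\R^d}\abs*{\nabla\tfrac{\delta E_{FP}}{\delta\rho}}^2\dx{\rho}(x)}^{1/2}=G_{FP}(\rho)$. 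For $\lambda$-convex energies this slope is a strong, hence weak, upper gradient in the sense of~\cref{def:wug}, which justifies taking $G=G_{FP}$ in~\eqref{eq:maxslope}.

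The crux is the $\lambda$-geodesic convexity of $E_{FP}$. The entropy $\int_{\R^d}\rho\log\rho\dx{x}$ is geodesically convex on $\R^d$ by McCann's criterion, since the map $s\mapsto s^d F(s^{-d})=-d\log s$ associated to $F(s)=s\log s$ is convex and non-increasing, contributing convexity constant $0$; the potential part $\int_{\R^d}V\dx{\rho}(x)$ is $\lambda$-geodesically convex if and only if $\nabla^2 V\geq\lambda\,\mathrm{Id}$. Writing $V(x)=f(\abs{x})$ with $f(r)=\log(1+r^{2d})$, the Hessian has radial eigenvalue $f''(r)$ and tangential eigenvalues $f'(r)/r$ of multiplicity $d-1$, where
\[
f''(r)=2d\,r^{2d-2}\,\frac{(2d-1)-r^{2d}}{(1+r^{2d})^2},\qquad \frac{f'(r)}{r}=\frac{2d\,r^{2d-2}}{1+r^{2d}}\geq 0 .
\]
The tangential eigenvalues are non-negative, while $f''$ is continuous on $(0,\infty)$ with finite limits as $r\to 0^+$ and as $r\to\infty$ (where $f''(r)\sim-2d\,r^{-2}\to 0^-$); it is therefore bounded below, and I would set $\lambda:=\inf_{r>0}f''(r)>-\infty$, giving $\nabla^2 V\geq\lambda\,\mathrm{Id}$ and the desired $\lambda$-geodesic convexity of $E_{FP}$. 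This verification is the main obstacle, precisely because $V$ is not convex: its radial second derivative turns negative once $r^{2d}>2d-1$, so one must genuinely control the infimum of $f''$ rather than appeal to outright convexity.

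With $\lambda$-convexity established, existence and uniqueness of the curve of maximal slope follow from the EVI gradient-flow theory in~\cite[Theorem 11.1.4]{AGS08}: for every $\rho_0\in Z_{E_{FP},2}$ there is a unique $\rho\in AC([0,\infty);\cP_2(\R^d))$ with $\rho(0)=\rho_0$ solving~\eqref{eq:maxslope}, coinciding with the $\lambda$-EVI gradient flow. For the final identification I would use the characterisation of the Wasserstein subdifferential and tangent velocity of such flows from~\cite[Chapter 10]{AGS08}, whereby $\rho$ solves the continuity equation $\partial_t\rho+\nabla\cdot(\rho v_t)=0$ with $v_t=-\nabla\tfrac{\delta E_{FP}}{\delta\rho}=-(\nabla\log\rho+\nabla V)$; substituting gives
\[
\partial_t\rho=\nabla\cdot\bra*{\rho\bra*{\nabla\log\rho+\nabla V}}=\nabla\cdot(\nabla\rho+\rho\nabla V)=\Delta\rho+\nabla\cdot(\rho\nabla V),
\]
which is precisely~\eqref{eq:sfp} in the weak sense. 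Conversely, any weak solution of~\eqref{eq:sfp} starting from finite energy dissipates $E_{FP}$ at the rate prescribed by~\eqref{eq:maxslope} with $G=G_{FP}$, hence is a curve of maximal slope; uniqueness of this weak solution (equivalently, of the law of the associated It\^o diffusion) then matches the uniqueness just established.
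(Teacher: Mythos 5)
Your proposal is correct and takes essentially the same route as the paper: the paper's proof is a one-line appeal to \cite[Theorem 11.2.8]{AGS08} together with the $\lambda$-convexity of $V$, and your argument simply supplies the details of that approach, namely the identification of $G_{FP}$ with the metric slope and the explicit verification (via the radial and tangential Hessian eigenvalues of $\log(1+\abs{x}^{2d})$, which you compute correctly) that $V$ is $\lambda$-convex for a finite negative $\lambda$, before invoking the same AGS machinery for existence, uniqueness, and equivalence with weak solutions.
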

\begin{proof}
The proof of this result follows from~\cite[Theorem 11.2.8]{AGS08} and the fact that $V$ is $\lambda$-convex. 
\end{proof}
The reason we have made this choice of the confinement $V$ is because it allows us to demonstrate our abstract convergence results in the setting in which the stationary solutions are not necessarily in $\cP_2(\R^d)$, i.e.~\cref{thm:converge'}. Indeed, we have the following result
\begin{proposition}
Consider the extensions $\tilde{E}_{FP}$  and $\tilde{G}_{FP}$ of $E_{FP}$  and $G_{FP}$ to $\cP(\R^d)$. Then,
\begin{align}
\rho_\infty (\dx{x})= \frac{Z_\infty}{1+\abs{x}^{2d}} \dx{x}\, , \quad Z_\infty= \bra*{\int_{\R^d}\frac{1}{1+ \abs{x}^{2d}} \dx{x}}^{-1} \, ,
\end{align} 
is the unique minimiser of $\tilde{E}_{FP}$ and unique zero of $\tilde{G}_{FP}$ over $\cP(\R^d)$. It follows that it is the unique weak stationary state of the associated curve of maximal slope in the sense of~\cref{def:wss}. Furthermore, $\rho_\infty \in \cP(\R^d)\setminus \cP_2(\R^d)$ and is a stationary weak solution of~\eqref{eq:sfp}.
\end{proposition}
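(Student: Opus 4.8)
The plan is to exploit that $\rho_\infty = Z_\infty e^{-V}$ is exactly the Gibbs measure of the confining potential $V(x)=\log(1+\abs{x}^{2d})$, which lets me recast both $\tilde E_{FP}$ and $\tilde G_{FP}$ relative to $\rho_\infty$. Writing $V = \log Z_\infty - \log \rho_\infty$ and using $\int \rho \dx{x}=1$, for any $\rho \in \cP(\R^d)$ that is absolutely continuous with respect to Lebesgue measure I would substitute to obtain
\[
\tilde E_{FP}(\rho) = \int_{\R^d} \rho \log\frac{\rho}{\rho_\infty}\dx{x} + \log Z_\infty = \mathrm{KL}(\rho\,\|\,\rho_\infty) + \log Z_\infty .
\]
Since $\rho_\infty$ is itself a probability density, the relative entropy is nonnegative and vanishes precisely when $\rho=\rho_\infty$ (Jensen/Gibbs inequality), while measures with a nontrivial singular part contribute $+\infty$ to the entropy and are thereby excluded. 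This identifies $\rho_\infty$ as the unique minimiser of $\tilde E_{FP}$ over $\cP(\R^d)$, with minimal value $\log Z_\infty$.

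For the weak upper gradient the same substitution is the decisive simplification: since $1+\abs{x}^{2d} = Z_\infty/\rho_\infty$, one has $\log\bra*{\rho(1+\abs{x}^{2d})} = \log(\rho/\rho_\infty) + \log Z_\infty$, so the additive constant drops out of the gradient and
\[
\tilde G_{FP}^2(\rho) = \int_{\R^d} \abs*{\nabla \log\frac{\rho}{\rho_\infty}}^2 \dx{\rho}(x) = I(\rho\,\|\,\rho_\infty)
\]
is the relative Fisher information. I would then argue that $\tilde G_{FP}(\rho)=0$ forces $\nabla \log(\rho/\rho_\infty)=0$ $\rho$-a.e., so $\rho/\rho_\infty$ is constant, and normalisation of the two probability densities gives $\rho=\rho_\infty$. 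Having shown $\rho_\infty$ is the unique zero of $\tilde G_{FP}$ and that $\tilde E_{FP}(\rho_\infty)=\log Z_\infty<\infty$, the claim that it is the unique weak stationary state is then immediate from \cref{def:wss}.

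It remains to place $\rho_\infty$ outside $\cP_2(\R^d)$ and to verify the stationary PDE, both of which are routine. Passing to spherical coordinates reduces the normalising integral to one with integrand $\sim r^{-d-1}$ (convergent, so $\rho_\infty$ is a genuine probability measure) and reduces $\int_{\R^d}\abs{x}^2\dx{\rho_\infty}$ to one with integrand $\sim r^{1-d}$, which diverges in the dimensions for which the example is posed, giving $\rho_\infty \in \cP(\R^d)\setminus\cP_2(\R^d)$. For the equation, the relation $\nabla\rho_\infty = -\rho_\infty\nabla V$ (a direct consequence of $\rho_\infty=Z_\infty e^{-V}$) yields $\Delta\rho_\infty = -\nabla\cdot(\rho_\infty\nabla V)$ pointwise, i.e.\ $\Delta\rho_\infty + \nabla\cdot(\rho_\infty\nabla V)=0$; as $V$ is smooth and $\rho_\infty$ is smooth with integrable first derivatives, this identity passes to the weak formulation, so $\rho_\infty$ is a stationary weak solution of~\eqref{eq:sfp}.

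The delicate point, and the step I expect to require the most care, is the rigorous characterisation of the zero set of $\tilde G_{FP}$. The informal statement that ``$\rho/\rho_\infty$ is constant on the support of $\rho$'' must exclude densities that coincide with a multiple of $\rho_\infty$ on only a proper subset of $\R^d$: such a configuration would introduce a jump in $\log(\rho/\rho_\infty)$ and hence force the weak gradient, and with it $\tilde G_{FP}$, to be infinite rather than zero. Combined with finiteness of the energy this pins down full support and a single normalising constant, giving $\rho=\rho_\infty$. The remaining subtlety is to ensure the entropy decomposition carries no indeterminate $\infty-\infty$ form, which holds here because $V\ge 0$ and the negative entropy is bounded below on $\cP(\R^d)$, so that $\mathrm{KL}(\rho\,\|\,\rho_\infty)$ is well defined in $[0,+\infty]$ throughout.
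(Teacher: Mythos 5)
Your proposal is correct and follows essentially the same route as the paper: the Jensen/relative-entropy decomposition $\tilde E_{FP}(\rho) = \mathrm{KL}(\rho\,\|\,\rho_\infty) + \log Z_\infty$ for uniqueness of the minimiser, and, for uniqueness of the zero of $\tilde G_{FP}$, the observation that a density equal to a multiple of $\rho_\infty$ on only part of $\R^d$ would make $\log\bra*{\rho/\rho_\infty}$ jump across the boundary of its support and hence force $\tilde G_{FP}=+\infty$, so full support and normalisation pin down $\rho=\rho_\infty$. The one caveat concerns the part the paper itself leaves unproved: your radial integrand $r^{1-d}$ for the second moment diverges only when $d\le 2$, so the assertion $\rho_\infty\in\cP(\R^d)\setminus\cP_2(\R^d)$ (and your verification of it) is valid only in those dimensions.
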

\begin{proof}
That $\rho_\infty$ is the unique minimiser follows from Jensen's inequality. Indeed, we have that for any $\rho \in \cP(\R^d) \cap \Leb^1(\R^d)$
\begin{align}
\tilde{E}_{FP}(\rho)=& \int_{\R^d} \rho \log \rho \dx{x} + \int_{\R^d}\log (1+ \abs{x}^{2d}) \dx{\rho}(x)\\
=& \int_{\R^d} \frac{\rho}{\rho_\infty} \log \bra*{\frac{\rho}{\rho_\infty} }\dx{\rho_\infty} + \log(Z_\infty) \\
\geq & \log(Z_\infty) = \tilde{E}_{FP}(\rho_\infty) \, .
\end{align}
Note that the final inequality is strict unless $\rho=\rho_\infty$. Thus, the result follows. That $\rho_\infty$ is a zero of $\tilde{G}_{FP}$ is apparent by plugging it into the expression for $\tilde{G}_{FP}$. Furthermore, for uniqueness we note that any zero $\rho \in \cP(\R^d) \cap \Leb^1(\R^d)$ of $\tilde{G}_{FP}$ must satisfy 
\begin{align}
\rho(x)= \frac{C_A}{1+ \abs{x}^{2d}} \, ,
\end{align}
for a.e. $ x \in A$, for every connected component $A$ of its support. If $\rho$ is not fully supported then this would imply that $\log(\rho(1+ \abs{x}^{2d}))$ is sum of indicator functions of disjoint sets. Since the indicator function of any set is not in $\SobH^1(\R^d)$ (unless the set is $\R^d$), it follows that
\begin{align}
\tilde{G}_{FP}= +\infty \, .
\end{align}
Thus, $\rho$ must have full support and so is equal to $\rho_\infty$. The fact that $\rho_\infty$ is a weak stationary solution of~\eqref{eq:sfp} follows by plugging it in.
\end{proof}
We can now state and prove our convergence result:
\begin{proposition}
Let $\rho \in AC([0,\infty);\cP_2(\R^d))$ be the unique $2$-curve of maximal slope of the energy $E_{FP}$ with respect to the weak upper gradient $G_{FP}$, for some initial datum $\rho_0 \in Z_{E_{FP},2}$. Then,
\begin{align}
\lim_{t \to \infty} d_{LP}(\rho(t),\rho_\infty)=0 \, ,
\end{align}
where $\rho_\infty \in \cP(\R^d) \cap \Leb^1(\R^d) $ is as given in~\cref{prop:statfp}.
\end{proposition}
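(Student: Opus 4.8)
The plan is to read off the result directly from \cref{thm:converge'}, applied to the extended energy $\tilde{E}_{FP}$ and extended weak upper gradient $\tilde{G}_{FP}$. By the previous proposition, $\rho_\infty$ is the \emph{unique} zero of $\tilde{G}_{FP}$ with finite extended energy, so the set of weak stationary states is the singleton $\cE_w = \set{\rho_\infty}$; once the hypotheses of \cref{thm:converge'} are verified, its conclusion $\lim_{t\to\infty} d_{LP}(\rho(t),\cE_w) = 0$ is precisely the claimed convergence. Existence of the curve of maximal slope for every $\rho_0 \in Z_{E_{FP},2}$ is already supplied by \cref{prop:statfp}, so the only task is to check assumptions \eqref{ass1'}, \eqref{ass2'}, and \eqref{ass3'} for the pair $(\tilde{E}_{FP}, \tilde{G}_{FP})$ on $\cP(\R^d)$.

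For \eqref{ass1'} and \eqref{ass3'}, properness is immediate since $\tilde{E}_{FP}(\rho_\infty) = \log Z_\infty < \infty$, and the lower bound $\tilde{E}_{FP} \geq \log Z_\infty$ is exactly the relative-entropy inequality already established in the previous proposition via Jensen. Lower semicontinuity of both functionals in the narrow topology of $\cP(\R^d)$ I would obtain from the technical lemma of \cref{sec:lsc}: the entropy $\int \rho\log\rho$ is narrowly l.s.c., the confinement term $\int V \dx{\rho}$ is l.s.c. because $V$ is continuous and bounded below, and $\tilde{G}_{FP}$ has the Fisher-information form to which that lemma is tailored.

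The substance of the argument is the compactness assumption \eqref{ass2'}, and I expect this to be the only genuine obstacle. The difficulty is structural: because $V(x)=\log(1+\abs{x}^{2d})$ grows so slowly, $\rho_\infty \notin \cP_2(\R^d)$, so no second-moment bound is available and one cannot deduce tightness from the moment directly. The way around this is the relative-entropy identity from the previous proposition, namely
\[
\tilde{E}_{FP}(\rho) = H(\rho\,|\,\rho_\infty) + \log Z_\infty, \qquad H(\rho\,|\,\rho_\infty) := \int_{\R^d} \frac{\dx{\rho}}{\dx{\rho_\infty}}\log\frac{\dx{\rho}}{\dx{\rho_\infty}}\dx{\rho_\infty},
\]
valid when $\rho \ll \rho_\infty$ (and $\tilde{E}_{FP}(\rho)=+\infty$ otherwise), where $\rho_\infty = Z_\infty e^{-V}$ is a genuine probability measure. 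Thus a sublevel bound $\tilde{E}_{FP}(\rho)\leq C$ is equivalent to the bound $H(\rho\,|\,\rho_\infty) \leq C - \log Z_\infty =: C'$, and I would invoke the standard fact that relative entropy against a fixed probability measure has narrowly compact sublevel sets. Concretely, applying the Donsker--Varadhan inequality $\int g\dx{\rho} \leq \log\int e^{g}\dx{\rho_\infty} + H(\rho\,|\,\rho_\infty)$ with $g = \lambda\,\mathbf{1}_{K^c}$ yields $\lambda\,\rho(K^c) \leq \log\bra*{1 + e^{\lambda}\rho_\infty(K^c)} + C'$; fixing $\lambda$ large and then using the tightness of the single measure $\rho_\infty$ to take $K$ large enough that $e^{\lambda}\rho_\infty(K^c)\leq 1$ produces a uniform tail bound $\sup\set{\rho(K^c) : \tilde{E}_{FP}(\rho)\leq C}$ that is arbitrarily small. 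Prokhorov's theorem then gives relative compactness of the sublevel set in $\cP(\R^d)$, which is \eqref{ass2'}.

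With \eqref{ass1'}, \eqref{ass2'}, \eqref{ass3'} verified, \cref{thm:converge'} yields $\lim_{t\to\infty} d_{LP}(\rho(t),\cE_w) = 0$, and since $\cE_w = \set{\rho_\infty}$ this is exactly the stated convergence. The same conclusion would alternatively follow from the \eqref{ass2'subset} variant by taking $\cY = \cP(\R^d)$, but the global tightness estimate above makes the unrestricted form \eqref{ass2'} the cleaner route.
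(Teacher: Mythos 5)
Your proposal is correct and follows the same overall architecture as the paper's proof: verify \eqref{ass1'}, \eqref{ass2'}, \eqref{ass3'}, invoke \cref{thm:converge'}, and use that $\cE_w=\set{\rho_\infty}$ is a singleton. The treatment of \eqref{ass1'} and \eqref{ass3'} is essentially identical to the paper's. The only genuine divergence is in the tightness step \eqref{ass2'}. The paper splits the energy, keeping half of the confinement term aside: it shows $\int_{\R^d}\rho\log\rho\dx{x}+\tfrac12\int_{\R^d}V\dx{\rho}\ge -C_1$ by a Jensen-type argument against a normalised version of $e^{-\theta V}$, deduces the uniform bound $\tfrac12\int_{\R^d}\log(1+\abs{x}^{2d})\dx{\rho}\le C_2$ on the sublevel set, and then gets the explicit tail estimate $\rho(B_R^c)\le 2C_2/\log(1+R^{2d})$ by a Markov-type inequality. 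You instead rewrite $\tilde{E}_{FP}(\rho)=H(\rho\,|\,\rho_\infty)+\log Z_\infty$ and use the Donsker--Varadhan variational bound with $g=\lambda\mathbf{1}_{K^c}$ to get a uniform tail bound from the tightness of the single measure $\rho_\infty$. Both routes end in Prokhorov's theorem. Your argument is the more robust of the two: it uses only that $e^{-V}$ is normalisable, not the specific logarithmic growth of $V$, and it sidesteps the integrability question for fractional powers $e^{-\theta V}$ that the paper's splitting implicitly relies on (note that $e^{-V/2}\sim\abs{x}^{-d}$ is borderline non-integrable, so the paper's ``half'' really needs to be a fraction $\theta>1/2$). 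What the paper's computation buys in exchange is a quantitative rate of tail decay, $O(1/\log R)$, which your soft compactness statement does not produce; for the purposes of this proposition neither version needs it.
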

\begin{proof}
We check that the conditions of~\cref{thm:converge'} hold true. Note that we have already shown in the proof of~\cref{prop:statfp} that $\tilde{E}_{FP}$ is bounded below and thus it is proper. Furthermore, the fact that it is l.s.c. follows from standard results (cf.~\cite[Lemma 4.3.1]{JLJ98}) and Fatou's lemma. Thus,~\eqref{ass1'} is satisfied. Note that $\tilde{G}_{FP}$ is of the form~\eqref{eq:LSC} and so~\eqref{ass3'} is satisfied.  Furthermore, for any element of a  sublevel set of the energy $\rho \in L_{\leq C }(\tilde{E}_{FP})$, we have that
\begin{align}
\frac{1}{2}\int_{\R^d} \log(1 + \abs{x}^{2d}) \dx{\rho} \leq C- \int_{\R^d} \rho \log \rho \dx{x} - \frac{1}{2} \int_{\R^d}\log (1+ \abs{x}^{2d}) \dx{\rho}(x) \, .
\end{align} 
Using a similar argument as in the proof of~\cref{prop:statfp}, we have that
\begin{align}
\int_{\R^d} \rho \log \rho \dx{x} + \frac{1}{2}  \int_{\R^d}\log (1+ \abs{x}^{2d}) \dx{\rho}(x)  \geq -C_1 \, ,
\end{align}
for some $C_1>0$ independent of $\rho$. It follows that for all $\rho \in L_{\leq C }(\tilde{E}_{FP})$ 
\begin{align}
\frac{1}{2}\int_{\R^d} \log(1 + \abs{x}^{2d}) \dx{\rho} \leq C_2 \, .
\end{align}
Thus, for all $\rho \in L_{\leq C }(\tilde{E}_{FP})$
\begin{align}
\int_{B_R^c(0)}\dx{\rho} \leq \frac{2C_2}{\log(1+ R^{2d})} \, .
\end{align}
By Prokhorov's theorem,~\eqref{ass2'} is satisfied. Thus, by~\cref{thm:converge'},
\begin{align}
\lim_{t \to \infty}d_{LP}(\rho(t),\cE_w) =0 \, ,
\end{align}
where $\cE_w$ is the set of weak stationary states as defined in~\cref{def:wss}. But by~\cref{prop:statfp}, $\cE_w =\set{\rho_\infty}$. Thus, the result follows.
\end{proof}

\subsection{The noisy Kuramoto model}~\label{sec:kuramoto}
In this section, we discuss a special case of the McKean--Vlasov equation  posed on the one-dimensional unit torus, $\T$ (or, equivalently, the unit circle $\mathbb{S}$). This corresponds to the choice of the interaction potential $W_\kappa= - \kappa \cos(2 \pi x), \kappa>0$, which leads to the following PDE:
\begin{align}\label{eq:kuramoto}
\begin{cases}
\partial_t \rho = \partial^2_x \rho + \partial_x(\rho W_\kappa' * \rho)  & (x,t) \in \T \times [0,\infty) \\
\rho(0)= \rho_0 \in \cP_2(\T)  
\end{cases}
\, .
\end{align}
The above equation describes the so-called noisy Kuramoto model for synchronising oscillators. It can be derived as the mean field limit of the following set of interacting SDEs on the torus: 
\begin{align}
\begin{cases}
\dx{X_t^i}= -\dfrac{1}{N}\sum\limits_{j=1}^N W'_\kappa(X_t^i-X_t^j) \dx{t} + \sqrt{2} \dx{B_t^i} \, \\
\mathrm{Law}(X_t^1, \dots, X_t^N)= \rho_0^{\otimes N} \in \cP_2(\T^N) 
\end{cases}
\, ,\label{eq:kurasde}
\end{align}
where the $X_t^i$ represent the phases of the individual oscillators and the $B_t^i$ are independent $\T$-valued Wiener processes. We refer the reader to~\cite{Kur81,SSK88,ABPRS05} for a review of the Kuramoto model and its variants. We note that the Kuramoto model and the dynamics described in~\eqref{eq:kurasde} also correspond to the overdamped Langevin dynamics associated to the so-called Heisenberg $XY$ model from statistical physics for continuous spins on a lattice with mean field interaction (cf.~\cite[Chapter 9]{FV2018}).

As expected, we can think of~\eqref{eq:kuramoto} as a curve of maximal slope of the energy $E_K : \cP_2(\T) \to (-\infty,+\infty]$,
\begin{align}
E_K(\rho):= \int_{\T} \rho \log \rho \dx{x} + \frac{1}{2} \iint_{\T \times \T} W_\kappa(x-y) \dx{\rho}(x) \dx{\rho}(y) \, ,
\end{align}
with respect to the weak upper gradient $G_K :\cP_2(\T) \to (-\infty,+\infty]$,
\begin{align}
G_K(\rho):= \bra*{\int_\T \abs*{\nabla \log  \frac{\rho}{e^{-W_k * \rho}}}^2 \dx{\rho}}^{1/2} \, .
\end{align}
Indeed, we have the following result which we state without proof:
\begin{proposition}\label{prop:comskura}
Let $\rho_0 \in Z_{E_K,2}$. Then, there exists a unique $2$-curve of maximal slope $\rho \in AC([0,\infty); \cP_2(\T))$ of the energy $E_K$ with respect to the weak upper gradient $G_K$  such that $\rho(0)=\rho_0$. Furthermore, curves of maximal slope are equivalent to weak solutions of~\eqref{eq:kuramoto}. 
\end{proposition}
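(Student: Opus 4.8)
The plan is to mirror the argument for~\cref{prop:statfp}: I would verify that $E_K$ belongs to the class of $\lambda$-geodesically convex functionals for which the gradient-flow machinery of~\cite[Chapter 11]{AGS08} guarantees existence, uniqueness, and identification with the driving PDE. The crucial structural input is the $\lambda$-geodesic convexity of $E_K$ along (generalized) geodesics in $(\cP_2(\T),W_2)$ for some $\lambda \in \R$; once this is in hand, existence and uniqueness of the curve of maximal slope follow from the abstract theory, and the equivalence with weak solutions of~\eqref{eq:kuramoto} reduces to computing the Wasserstein subdifferential of $E_K$.

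First I would decompose $E_K = \mathcal{H} + \mathcal{W}$ into the entropy $\mathcal{H}(\rho) = \int_\T \rho \log\rho \dx{x}$ and the interaction energy $\mathcal{W}(\rho) = \tfrac12 \iint_{\T\times\T} W_\kappa(x-y)\dx{\rho}(x)\dx{\rho}(y)$. Since $\T$ is compact, every probability measure has finite second moment, so $\cP_2(\T)=\cP(\T)$, and both terms are bounded below: $\mathcal{H}\ge 0$ by Jensen's inequality (as $|\T|=1$) and $\mathcal{W}\ge -\kappa/2$ since $W_\kappa$ is bounded; hence $E_K$ is proper and bounded below. For the convexity, the flat torus has vanishing Ricci curvature, so McCann's criterion applies and $\mathcal{H}$ is geodesically ($0$-)convex. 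Because $W_\kappa(x)=-\kappa\cos(2\pi x)$ is smooth with $W_\kappa''(x)=\kappa(2\pi)^2\cos(2\pi x)\ge -\kappa(2\pi)^2=:\lambda_0$, the potential is $\lambda_0$-convex, and a direct computation along a straight-line interpolation of the displacement $x-y$ gives $\tfrac{d^2}{dt^2}\mathcal{W}(\rho_t)\ge \lambda_0 W_2^2(\rho_0,\rho_1)$; thus $\mathcal{W}$, and therefore $E_K$, is $\lambda_0$-geodesically convex along generalized geodesics (the doubling condition on $W_\kappa$ being automatic from its boundedness).

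With $\lambda_0$-convexity established, existence of a curve of maximal slope follows from~\cite[Theorems 2.3.1 and 2.3.3]{AGS08} (the required compactness being automatic on the compact space $\T$), while uniqueness for $p=2$ follows from~\cite[Theorem 11.1.4]{AGS08}, exactly as in the remark after~\cref{defn:maxslope}. For the identification with~\eqref{eq:kuramoto}, I would compute the Wasserstein subdifferential of $E_K$ via~\cite[Chapter 10]{AGS08}: for sufficiently regular $\rho$ the minimal element is the velocity $v=-\nabla\bra*{\log\rho + W_\kappa*\rho}$, whose squared $L^2(\rho)$-norm is precisely $\norm*{v}_{L^2(\rho)}^2=G_K^2(\rho)$, confirming that $G_K$ represents the minimal weak upper gradient. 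Inserting $v$ into the continuity equation $\partial_t\rho+\partial_x(\rho v)=0$ yields
\[
\partial_t\rho = \partial_x\bra*{\partial_x\rho + \rho\, W_\kappa' * \rho} = \partial_x^2\rho + \partial_x(\rho\, W_\kappa' * \rho),
\]
which is exactly~\eqref{eq:kuramoto}; the energy-dissipation identity~\eqref{eq:maxslope} then gives the equivalence in both directions between curves of maximal slope and weak solutions.

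The main obstacle I anticipate is transferring the statements of~\cite[Chapter 11]{AGS08}, typically formulated on $\R^d$, to the torus: one must confirm that geodesic and generalized-geodesic convexity of the interaction energy survives on the flat compact manifold $\T$, where minimizing geodesics between measures need not be unique, and that the subdifferential characterization together with the regularity theory underpinning the PDE identification remain valid. Since $\T$ is compact and $W_\kappa$ is smooth and bounded, the tightness, integrability, and growth issues that complicate the $\R^d$ analysis are trivial here, so I expect these adaptations to be technical rather than substantive; nevertheless, the careful verification of $\lambda_0$-convexity along generalized geodesics on $\T$ is the step demanding the most care.
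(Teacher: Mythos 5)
The paper states \cref{prop:comskura} without proof, but the intended argument is visibly the same one used for the analogous \cref{prop:statfp}, namely $\lambda$-convexity of the driving functional combined with the machinery of~\cite[Chapter 11]{AGS08}; your proposal carries out exactly this, with the correct decomposition of $E_K$, the correct semiconvexity constant coming from $W_\kappa''\geq -\kappa(2\pi)^2$, and the correct identification of the velocity field with $-\nabla(\log\rho+W_\kappa*\rho)$, so it is essentially the paper's approach spelled out in detail. The only quibbles are cosmetic: the interaction term is $2\lambda_0$-convex along generalized geodesics rather than $\lambda_0$-convex (the factor $2$ coming from $|(x_1-y_1)-(x_2-y_2)|^2\leq 2|x_1-x_2|^2+2|y_1-y_2|^2$), which is irrelevant for existence and uniqueness, and the adaptation from $\R^d$ to $\T$ that you flag is indeed only technical.
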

The reason we have made this choice of the interaction potential is because the noisy Kuramoto model is one of the simplest models of this form which exhibits the phenomenon of phase transitions, i.e. a change in structure of stationary solutions and minimisers of the energy as the parameter $\kappa>0$ is varied. Of course the Kuramoto model is not the only model which exhibits this phenomenon. We refer the reader to~\cite{CP10,CGPS20,CG19} where the phenomenon of phase transitions and bifurcations for such equations is discussed in detail in a more general context. We remark that phase transitions also occur in the case of the McKean--Vlasov equation on $\R^d$ discussed in~\cref{sec:MV}, see~\cite{Da83,Tu13,BCCD16,li2019flocking}. We now state without proof the following result which provides a complete characterisation of the set of stationary states and minimisers of the curve of maximal slope associated to the Kuramoto model:
\begin{proposition}{~\cite[Proposition 6.1]{CGPS20}}
Assume $\kappa \leq 2$. Then, there exists a unique minimiser of $E_K$ and stationary state of the associated curve of maximal slope given by $\rho_\infty:= \mathcal{L}_{\T^d}$, the normalised Lebesgue measure on $\T^d$. On the other hand, if $\kappa >2$, there exist two (up to translations) stationary states of the curve of maximal slope given by $\rho_\infty$ and 
\begin{align}
\rho^*(\dx{x}):= Z_\kappa^{-1} e^{\sigma(\kappa)\cos(2 \pi x)} \dx{x}, \qquad Z_\kappa:= \int_\T e^{\sigma(\kappa)\cos(2 \pi x)} \dx{x} \, ,
\end{align}
for some $\sigma: \R_+ \to \R_+$ such that 
$$
\lim_{\kappa \to \infty} \sigma(\kappa)= + \infty \, .
$$
Furthermore, $\rho^*$ is the unique (up to translations) minimiser of $E_K$ for $\kappa>2$.
\label{prop:pt}
\end{proposition}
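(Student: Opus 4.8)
To prove \cref{prop:pt}, the plan is to identify the stationary states with the zeros of $G_K$ via \cref{lemma:equil}, reduce the resulting self-consistency relation to a scalar fixed-point equation, analyse that scalar equation, and finally single out the minimiser by a second-variation computation at $\rho_\infty$. First, by \cref{lemma:equil} a measure $\rho\in\cP_2(\T)$ with $E_K(\rho)<\infty$ is a stationary state if and only if $G_K(\rho)=0$, i.e. $\nabla\bra*{\log\rho + W_\kappa*\rho}=0$ holds $\rho$-a.e. Since $W_\kappa$ is smooth, $W_\kappa*\rho$ is smooth and bounded; arguing exactly as in the full-support argument for the Fokker--Planck energy in \cref{sec:fplog} (a density whose support is a proper subset of $\T$ would force $\log\rho + W_\kappa*\rho$ to be a non-constant sum of indicators, hence not in $\SobH^1$, so that $G_K=+\infty$), any finite-energy zero of $G_K$ is a smooth, strictly positive density satisfying
\begin{align}
\rho(x)=\frac{1}{Z}e^{-(W_\kappa*\rho)(x)}, \qquad Z=\int_\T e^{-(W_\kappa*\rho)}\dx{x} \, .
\label{eq:ptselfcons}
\end{align}

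Second, I would exploit that $W_\kappa(x)=-\kappa\cos(2\pi x)$ couples only the first Fourier mode. Setting $a:=\int_\T\cos(2\pi y)\dx{\rho}(y)$ and $b:=\int_\T\sin(2\pi y)\dx{\rho}(y)$ gives $(W_\kappa*\rho)(x)=-\kappa\bra*{a\cos(2\pi x)+b\sin(2\pi x)}$. By the rotational (translation) invariance of the problem on $\T$ we may rotate so that $b=0$ and $a\ge 0$, whence \eqref{eq:ptselfcons} reads $\rho(x)=Z^{-1}e^{\kappa a\cos(2\pi x)}$. Imposing consistency of the cosine moment and writing $s:=\kappa a$ turns the problem into the scalar fixed-point equation
\begin{align}
\frac{s}{\kappa}=m(s), \qquad m(s):=\frac{\int_\T\cos(2\pi y)\,e^{s\cos(2\pi y)}\dx{y}}{\int_\T e^{s\cos(2\pi y)}\dx{y}} \, .
\label{eq:ptscalar}
\end{align}
Here $m=(\log Z_s)'$ with $Z_s:=\int_\T e^{s\cos(2\pi y)}\dx{y}$, so $m'(s)$ is the variance of $\cos(2\pi\cdot)$ under the tilted measure and is strictly positive; in particular $m$ is odd, strictly increasing, with $m(0)=0$, $m'(0)=\int_\T\cos^2(2\pi y)\dx{y}=\tfrac12$, and $m(s)\to 1$ as $s\to\infty$.

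Third, the heart of the matter is to show that $s\mapsto m(s)/s$ is strictly decreasing on $(0,\infty)$, from $m'(0)=\tfrac12$ down to $0$; this follows from strict concavity of $m$ on $(0,\infty)$, equivalently from negativity of the third cumulant $m''(s)=(\log Z_s)'''<0$, a standard property of the ratio of modified Bessel functions $m(s)=I_1(s)/I_0(s)$. Granting this, \eqref{eq:ptscalar} always has the trivial root $s=0$ (yielding $\rho_\infty=\mathcal L_\T$), while a nontrivial root $s>0$ exists if and only if $1/\kappa<m'(0)=\tfrac12$, i.e. $\kappa>2$, and is then unique; call it $\sigma(\kappa)$. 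Undoing the rotation, the nontrivial stationary states form a single translation orbit, namely the translates of $\rho^*(\dx{x})=Z_\kappa^{-1}e^{\sigma(\kappa)\cos(2\pi x)}\dx{x}$, which accounts for the phrase ``up to translations''. Since $m(s)/s\to 0$ only as $s\to\infty$, the identity $m(\sigma(\kappa))/\sigma(\kappa)=1/\kappa\to 0$ forces $\sigma(\kappa)\to\infty$ as $\kappa\to\infty$.

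Finally, to identify the minimiser I would note that a minimiser of $E_K$ exists (the entropy is bounded below by $0$ on $\T$, the interaction is bounded, and $E_K$ is l.s.c. with narrowly relatively compact sublevel sets on the compact torus) and is necessarily a zero of $G_K$, hence one of the states above. For $\kappa\le 2$ the only stationary state is $\rho_\infty$, so it is the unique minimiser. For $\kappa>2$ a second-variation computation around $\rho_\infty$, writing $\rho=1+\eps\phi$ with $\int_\T\phi\dx{x}=0$, $\phi=\sum_{k\ge 1}\bra*{\a_k\cos(2\pi k x)+\b_k\sin(2\pi k x)}$, and using that $W_\kappa*\phi$ only sees the first mode, yields
\begin{align}
E_K(1+\eps\phi)-E_K(\rho_\infty)=\frac{\eps^2}{4}\sum_{k\ge 2}\bra*{\a_k^2+\b_k^2}+\frac{2-\kappa}{8}\,\eps^2\bra*{\a_1^2+\b_1^2}+o(\eps^2) \, .
\end{align}
Since $(2-\kappa)/8<0$, the first mode is a descent direction and $\rho_\infty$ is a saddle, hence not a local (and so not the global) minimiser; the minimiser must then be a translate of $\rho^*$. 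I expect the one genuinely delicate analytic input to be the strict concavity of $m$ (equivalently $m''<0$), which simultaneously produces the bifurcation threshold $\kappa=2$ and the existence and uniqueness of the nontrivial branch; once it is established the minimiser identification is soft, following from existence plus criticality plus the saddle structure above.
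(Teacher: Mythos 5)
The paper does not prove this proposition at all: it is stated explicitly ``without proof'' and imported from \cite[Proposition 6.1]{CGPS20}, so there is no internal argument to compare against. Your reconstruction is, in substance, the standard route (and essentially the one taken in the cited reference): characterise stationary states as zeros of $G_K$ via \cref{lemma:equil}, obtain the Gibbs self-consistency relation $\rho = Z^{-1}e^{-W_\kappa*\rho}$, use the fact that $W_\kappa$ excites only the first Fourier mode together with translation invariance to reduce to the scalar equation $s/\kappa = m(s)$ with $m = I_1/I_0$, and then read off the bifurcation at $\kappa = 2$ from the behaviour of $m(s)/s$; the second-variation computation at $\rho_\infty$ (your coefficient $(2-\kappa)/8$ on the first mode is correct) correctly rules out $\rho_\infty$ as a minimiser for $\kappa>2$, and existence of a minimiser on the compact torus is indeed soft. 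The one place where your argument is an outline rather than a proof is exactly the point you flag: the strict decrease of $s \mapsto m(s)/s$ on $(0,\infty)$ (equivalently, strict concavity of $I_1/I_0$, or the Turán-type inequality $2I_0I_1 > s(I_0^2 - I_1^2)$). This is true but genuinely nontrivial; it is the analytic heart of \cite[Proposition 6.1]{CGPS20} and cannot be waved through as ``standard'' without a reference or a proof, since it is what simultaneously yields non-existence of nontrivial states for $\kappa \le 2$ (including the boundary case $\kappa = 2$, where one needs the strict inequality $m(s)/s < 1/2$ for $s>0$) and uniqueness of the bifurcating branch for $\kappa > 2$. Modulo supplying that one lemma, your proposal is a correct and complete proof of the statement.
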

We are finally in a position to state our convergence result:
\begin{proposition}\label{prop:kuraconverge}
 Let $\rho \in AC([0,\infty);\cP_2(\T))$ be the unique $2$-curve of maximal slope of the energy $E_{K}$ with respect to the weak upper gradient $G_{K}$, for some initial datum $\rho_0 \in \cP_2(\T)$ with $E_{K}(\rho_0)< \infty$. Then, if $\kappa \leq 2$,
 \begin{align}
\lim_{t \to \infty} W_\infty(\rho(t),\rho_\infty)=0 \, ,
\end{align}
where $\rho_\infty(\dx{x})=\dx{x}$. On the other hand if $\kappa >2$ and $E_K(\rho_0)<E_K(\rho_\infty)=0$, then
\begin{align}
\lim_{t \to \infty}W_\infty(\rho(t),T_\rho^*)=0 \, ,
\end{align} 
where $\rho^*$ is as defined in~\cref{prop:pt} and $T_{\rho^*}$ is the set of all translates of $\rho^*$. Furthermore, if $\rho_0$ is symmetric about some $x^* \in \T$, i.e.
\begin{align}
\rho_0(x^*+\cdot)=\rho_0(x^*-\cdot) \, ,
\end{align} 
then
\begin{align}
\lim_{t \to \infty}W_\infty(\rho(t),\rho^*_{x^*})=0 \, ,
\end{align} 
where $\rho^*_{x^*} \in T_{\rho^*}$ is such that
\begin{align}
\int_\T (x -x^*)\rho^*_{x^*} \dx{x}=0 \, .
\end{align}
\end{proposition}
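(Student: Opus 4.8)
The plan is to reduce everything to the abstract convergence result \cref{thm:converge} together with the classification of stationary states in \cref{prop:pt}, and then to upgrade the resulting narrow convergence to the finer $W_\infty$ metric. First I would check the abstract hypotheses for $(E_K,G_K)$ on $\cP_2(\T)$. Since $\T$ is compact, $\cP_2(\T)=\cP(\T)$ is narrowly compact by Prokhorov's theorem, so~\eqref{ass2} holds trivially for every sublevel set; the entropy $\int_\T\rho\log\rho\,dx$ is narrowly l.s.c. and bounded below (by Jensen, as $|\T|=1$), while the interaction term is narrowly continuous and bounded because $W_\kappa$ is smooth, so~\eqref{ass1} holds; and $G_K$ has the Fisher-information form~\eqref{eq:LSC} handled by the lower-semicontinuity lemma of \cref{sec:lsc}, giving~\eqref{ass3}. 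Existence and uniqueness of the curve being guaranteed by \cref{prop:comskura}, \cref{thm:converge} yields $\lim_{t\to\infty}d_\sigma(\rho(t),\cE)=0$, where by \cref{prop:pt} we have $\cE=\{\rho_\infty\}$ for $\kappa\le 2$ and $\cE=\{\rho_\infty\}\cup T_{\rho^*}$ for $\kappa>2$.

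When $\kappa\le 2$ this is already narrow convergence to $\rho_\infty$. When $\kappa>2$ I would use the energy to exclude the uniform state: since $t\mapsto E_K(\rho(t))$ is non-increasing, $E_K(\rho(t))\le E_K(\rho_0)<0=E_K(\rho_\infty)$, so by narrow lower semicontinuity every $\bar\rho\in\omega^\rho(\rho_0)$ satisfies $E_K(\bar\rho)\le E_K(\rho_0)<E_K(\rho_\infty)$ and hence $\bar\rho\ne\rho_\infty$. Together with $\omega^\rho(\rho_0)\subset\cE$ from \cref{thm:omegastat}, this forces $\omega^\rho(\rho_0)\subset T_{\rho^*}$, and therefore $d_\sigma(\rho(t),T_{\rho^*})\to 0$.

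The main obstacle is passing from $d_\sigma$, which on the compact torus metrises the narrow topology and agrees with the $W_p$-topology for finite $p$, to the strictly finer $W_\infty$-topology. Here I would invoke \cref{lem:curveconv} with $d_1=d_\sigma$ and $d_2=W_\infty$, reducing matters to the relative compactness of $\bigcup_{t\ge 0}\{\rho(t)\}$ in $W_\infty$. This is the technical heart: using parabolic smoothing for~\eqref{eq:kuramoto} and the maximum principle I would establish uniform-in-time two-sided bounds $0<c\le\rho(t,\cdot)\le C$ on $[\tau,\infty)$ for each $\tau>0$, the piece $[0,\tau]$ being a $W_\infty$-continuous, hence compact, curve. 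In one dimension such bounds make the cumulative distribution functions uniformly bi-Lipschitz, so narrow convergence upgrades to uniform convergence of the inverse distribution functions, i.e.\ to $W_\infty$ convergence; this gives the required $W_\infty$-precompactness and completes both regimes.

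For symmetric initial data I would use that the evenness of $W_\kappa$ makes~\eqref{eq:kuramoto} equivariant under the reflection $x\mapsto 2x^*-x$; as $\rho_0$ is fixed by this reflection, the uniqueness in \cref{prop:comskura} forces $\rho(t)$ to stay symmetric about $x^*$, so every limit point is symmetric about $x^*$. The reflection-symmetric members of $T_{\rho^*}$ form a discrete set, so by connectedness of the $\omega$-limit set (via \cref{thm:ch}, as used in \cref{prop:entlimit}) the limit is a single translate, which the normalisation $\int_\T(x-x^*)\rho^*_{x^*}\,dx=0$ identifies. I expect the delicate point to be confirming that the flow selects exactly this translate rather than its antipodal symmetric counterpart, which amounts to a basin-of-attraction analysis within the symmetric subspace.
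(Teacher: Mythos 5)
Your argument for the narrow convergence is essentially the paper's: verify \eqref{ass1}--\eqref{ass3} (with \eqref{ass2} trivial by compactness of $\cP(\T)$), apply \cref{thm:converge}, use \cref{prop:pt} to identify $\cE$, exclude $\rho_\infty$ for $\kappa>2$ via monotonicity of $t\mapsto E_K(\rho(t))$ and the hypothesis $E_K(\rho_0)<0$, and use reflection equivariance plus uniqueness from \cref{prop:comskura} to propagate symmetry of the initial datum. Where you genuinely diverge is the upgrade to $W_\infty$. The paper first notes that $\cP_p(\T)$ is compact for every finite $p$, so the narrow convergence is automatically $W_p$-convergence, and then invokes the interpolation-type estimate of~\cite[Theorem 1.2]{BJR07}, which bounds $W_\infty$ by a power of $W_p$ whenever the \emph{target} measure has a density bounded below away from zero --- a property read off directly from the explicit formulas for $\rho_\infty$ and $\rho^*$. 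You instead propose to prove uniform-in-time two-sided bounds $0<c\le\rho(t,\cdot)\le C$ on the \emph{solution} via parabolic smoothing, and then run a one-dimensional bi-Lipschitz CDF argument to get $W_\infty$-precompactness of the trajectory, feeding this into \cref{lem:curveconv}. Your route is workable (the drift $W_\kappa'*\rho$ is uniformly bounded, so a uniform Harnack/comparison estimate would deliver the lower bound), but the uniform-in-time lower bound is the hard step and you only assert it; the paper's route needs no regularity of the solution at all beyond what the abstract theory provides, which is precisely why it is preferred. Finally, on the symmetric case you are more careful than the paper: on the circle, symmetry about $x^*$ is the same as symmetry about $x^*+\tfrac12$, so the symmetric members of $T_{\rho^*}$ form a two-point set, and connectedness of the $\omega$-limit set only pins the limit down to one of the two antipodal translates; the paper's proof passes over the selection between them, whereas you correctly flag that identifying the translate normalised by $\int_\T(x-x^*)\rho^*_{x^*}\dx{x}=0$ requires an additional argument.
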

\begin{proof}
That~\eqref{ass1} is satisfied follows from the fact that the energy is finite at $\rho_\infty$ and we have
\begin{align}
E_K(\rho) \geq -\frac{1}{2} \norm{W_\kappa}_{\Leb^\infty(\T)}.
\end{align}
Again, the fact that $E_K$ is l.s.c. follows from standard results. Since $\cP(\T)=\cP_2(\T)$ and $G_K$ is of the form~\eqref{eq:LSC}, it follows that~\eqref{ass3} is satisfied. Finally, since $\cP(\T)$ is compact,~\eqref{ass2} is trivially satisfied. Applying~\cref{thm:converge}, we have that
\begin{align}
\lim_{t \to \infty}d_\sigma(\rho(t), \cE^K)=0 \, ,
\end{align} 
where $\cE^K$ is set of all stationary states of the associated curve of maximal slope. We know from~\cref{prop:pt} that, for $\kappa\leq 2$, $\cE^K= \set{\rho_\infty}$. Thus, for $\kappa\leq 2$,
\begin{align}
\lim_{t \to \infty}d_\sigma(\rho(t),\rho_\infty)=0 \, .
\end{align} 
On the other hand for $\kappa> 2$, we know from~\cref{prop:pt}, that $E_K(\rho^*)< E_K(\rho_\infty)$. Since $t \mapsto E_K(\rho(t))$ is non-increasing, it follows that if $E_K(\rho_0)<E_K(\rho_\infty)=0$ and $\kappa>2$, we have that
\begin{align}
\lim_{t \to \infty}d_\sigma(\rho(t),T_{\rho^*})=0 \, ,
\end{align}
where $T_{\rho^*}$ is the set of all translates of the measure $\rho^*$ defined in~\cref{prop:pt}. We now note that weak solutions of~\eqref{eq:kuramoto} with symmetric initial data about some $x^* \in \T$ remain symmetric about $x^*$ for all $t \geq 0$. This can be seen by performing a change of variables $x \mapsto -x$ in the associated curve of maximal slope~\eqref{eq:maxslope} and noting, from~\cref{prop:comskura}, that curves of maximal slope are unique and equivalent to weak solutions of~\eqref{eq:kuramoto}. Since elements of $T_{\rho^*}$ are all symmetric about some point and since this symmetry is preserved under weak convergence, the convergence to a distinct $\rho^*_{x^*} \in T_{\rho^*}$ follows such that
\begin{align}
\int_\T (x-x^*) \rho^*_{x^*} \dx{x}=0 \, .
\end{align}
We can improve the convergence in $d_\sigma(\cdot,\cdot)$ to any $W_p(\cdot,\cdot)$, $1\leq p <\infty$, by using the fact that $\cP_p(\T^d)$ is compact for all $1 \leq p<\infty$. Since $\rho_\infty$ is bounded below away from zero, we can apply~\cite[Theorem 1.2]{BJR07} to obtain convergence in $W_\infty(\cdot,\cdot)$, thus completing the proof of the result.
\end{proof}
\begin{remark}
We note that in the case of general initial data $\rho_0 \in \cP_2(\T)$ it is non-trivial to identify which distinct element of $T_{\rho^*}$ is seen in the long-time limit. We refer the reader to the discussion in~\cite[Lemma 2.2, Theorem 4.6]{GPP12} in which it is shown that under certain mild assumptions a distinct limit in $T_{\rho^*}$ is selected.
\end{remark}

\subsection{McKean--Vlasov equation on $\R^d$}
\label{sec:MV}
We consider the McKean--Vlasov equation on $\R^d$ as considered in \cite{CMV03,Vi03,CMV06,Tug13,Tug14}. It describes the density of a so-called self-stabilising mean field McKean SDE and is given by
\begin{align}
\begin{cases}
\partial_t \rho = \nabla \cdot \bra*{\rho\nabla(\log \rho + V + W * \rho)}  \\
\rho(0)= \rho_0 \in \cP_2(\R^d) \, 
\end{cases}
\, ,\label{tugaut}
\end{align}
where $V:\R^d \to \R$ is a confining potential and $W:\R^d \to \R$ is an interaction potential satisfying the following assumptions:
\begin{align}
&V \in C^2(\R^d) \textrm{ and  there exists }\lambda > 0 \textrm{ such that } D^2 V (x) \geq \lambda \quad \textrm{ for all } x \notin K \subset \R^d, \textrm{compact},\tag{V1}\label{V1} \\
&\lim_{\abs{x} \to \infty}\norm{D^2 V(x)} = +\infty, \tag{V2}\label{V2} \\
&W \in C^2(\R^d) \textrm{ even and positive}.\tag{W1}\label{W}
\end{align} 
Under these rather minimal assumptions we can apply~\cite[Theorem 11.2.8]{AGS08}  to assert that weak solutions of~\eqref{tugaut} are equivalent to $2$-curves of maximal slope of the energy $E_{MV} : \cP_2(\R^d) \to (-\infty,+\infty]$, given by,
\begin{align}
E_{MV}(\rho):= \int_{\R^d} \rho \log \rho \; \dx{x} + \int_{\R^d}V(x) \dx{\rho}(x) + \frac{1}{2}\int_{\R^d}\bra*{W * \rho}(x) \dx{\rho}(x) \,. 
\end{align}
with respect to the weak upper gradient $G:\cP_2(\R^d) \to [0,\infty]$ which is given by
\begin{align}
G_{MV}(\rho):=\int_{\R^2} \abs*{\nabla \log \frac{\rho}{e^{-V -W * \rho}}}^2 \dx{\rho}(x) \, ,
\end{align}
if the above quantities are finite or as $+\infty$ otherwise. It follows that stationary states of the $2$-curve of maximal slope are equivalent to stationary solutions of~\eqref{tugaut}. To apply our results, we need to check that~\eqref{ass2} and~\eqref{ass3} hold true:
\begin{lemma}
Under the assumptions~\eqref{V1}, \eqref{V2}, and ~\eqref{W}, the energy $E_{MV}$ satisfies~\eqref{ass1} and~\eqref{ass2}. Furthermore, the sublevel set $L_{\leq C}(E_{MV}):=\set{\rho \in \cP_2(\R^d) : E_{MV}(\rho) \leq C }, C \in \R$ is compact in $\cP_2(\R^d)$ with respect to the $W_{2-\eps}(\cdot,\cdot)$ metric.
\label{ass12MV}
\end{lemma}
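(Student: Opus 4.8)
The plan is to verify the three claims in order, the single engine behind all of them being a lower bound on $E_{MV}$ that simultaneously controls the second moment on sublevel sets.

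\textbf{Lower bound, properness, and moment control.} First I would compare the entropy against a scaled Gaussian: for $\beta>0$ and $g_\beta(x)=(\beta/2\pi)^{d/2}e^{-\beta\abs{x}^2/2}$, the nonnegativity of the relative entropy $\mathrm{Ent}(\rho\,|\,g_\beta)\ge 0$ gives
\[
\int_{\R^d}\rho\log\rho\,\dx{x}\ \ge\ \tfrac{d}{2}\log(\beta/2\pi)-\tfrac{\beta}{2}\int_{\R^d}\abs{x}^2\dx{\rho}.
\]
From~\eqref{V1}, the bound $D^2V\ge\lambda I$ outside a compact set integrates to the quadratic growth $V(x)\ge\tfrac{\lambda}{2}\abs{x}^2-C$ for all $x$, so $\int V\,\dx{\rho}\ge\tfrac{\lambda}{2}\int\abs{x}^2\dx{\rho}-C$; and~\eqref{W} forces the interaction term $\tfrac12\iint W(x-y)\dx{\rho}(x)\dx{\rho}(y)\ge 0$. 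Choosing $0<\beta<\lambda$ and adding these three estimates produces
\[
E_{MV}(\rho)\ \ge\ \tfrac{\lambda-\beta}{2}\int_{\R^d}\abs{x}^2\dx{\rho}-C'\ \ge\ -C',
\]
which is the lower bound required by~\eqref{ass1}; properness follows by evaluating $E_{MV}$ at any smooth compactly supported density, for which each term is finite. The same inequality, read on a sublevel set $\{E_{MV}\le C\}$, yields the uniform second-moment bound $\int\abs{x}^2\dx{\rho}\le M$, which is the quantitative heart of the compactness statements.

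\textbf{Lower semicontinuity.} I would treat the three terms separately under $\sigma$- (narrow) convergence $\rho_n\overset{\sigma}{\to}\rho$. The entropy $\rho\mapsto\int\rho\log\rho$ is narrowly l.s.c. by a standard argument (cf.~\cite{AGS08}); the linear term $\int V\,\dx{\rho}$ is l.s.c. because $V$ is continuous and bounded below; and since $\rho_n\overset{\sigma}{\to}\rho$ implies $\rho_n\otimes\rho_n\to\rho\otimes\rho$ narrowly on $\R^d\times\R^d$, the interaction term is l.s.c. because $W$ is continuous and bounded below. All of these fall under the lower semicontinuity criterion recorded in~\cref{sec:lsc}. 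This completes~\eqref{ass1}.

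\textbf{Compactness in $W_{2-\eps}$, and the main obstacle.} The uniform bound $\int\abs{x}^2\dx{\rho}\le M$ yields tightness via Chebyshev, $\rho(B_R^c)\le M/R^2$ uniformly, so by Prokhorov's theorem the sublevel set is relatively compact in the narrow topology; its narrow limits have finite second moment by l.s.c., so it is relatively compact in the $\sigma$-topology, giving~\eqref{ass2}. To upgrade to $W_{2-\eps}$, I note that the same moment bound makes $\abs{x}^{2-\eps}$ uniformly integrable over the sublevel set, since $\int_{\abs{x}>R}\abs{x}^{2-\eps}\dx{\rho}\le R^{-\eps}\int\abs{x}^2\dx{\rho}\le M R^{-\eps}$. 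By the characterisation of $W_q$-convergence as narrow convergence together with convergence of $q$-th moments (cf.~\cite{AGS08}), this uniform integrability upgrades narrow convergence inside the sublevel set to $W_{2-\eps}$-convergence; and the l.s.c. of $E_{MV}$ makes the sublevel set closed, hence $W_{2-\eps}$-compact. I expect the main obstacle to be the very first step: calibrating $\beta$ against $\lambda$ so that the negative entropy is strictly dominated by the confinement while the interaction term is safely discarded via positivity. It is also worth stressing that~\eqref{V1} only guarantees quadratic lower growth of $V$ (the operator norm in~\eqref{V2} controls the largest eigenvalue of $D^2V$, not the smallest), so $\abs{x}^2$ is controlled as a bound but not as a uniformly integrable family; this is precisely why the sharp conclusion is compactness in $W_{2-\eps}$ for $\eps>0$ rather than in $W_2$.
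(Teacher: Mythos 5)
Your proof is correct, but it reaches the conclusion by a noticeably different route than the paper. For the compactness, the paper combines~\eqref{V1} and~\eqref{V2} to assert the superquadratic growth $V(x)/\abs{x}^2\to+\infty$ and then works directly with the weighted quantity $\int (V(x)/\abs{x}^2-1)\abs{x}^2\dx{\rho}$ to make the \emph{tails of the second moment itself} uniformly small on sublevel sets, concluding via Prokhorov; you instead use only~\eqref{V1} to get quadratic growth $V\gtrsim \tfrac{\lambda}{2}\abs{x}^2-C$ (strictly, $\tfrac{\lambda'}{2}\abs{x}^2-C$ for any $\lambda'<\lambda$, since integrating the Hessian bound leaves a linear term --- harmless, as you then calibrate $\beta<\lambda'$), extract a uniform second-moment bound, and convert it into uniform integrability of $\abs{x}^{2-\eps}$ by paying a factor $R^{-\eps}$. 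Your route is more economical: it never invokes~\eqref{V2}, and your closing observation is well taken --- the operator norm in~\eqref{V2} controls the largest eigenvalue of $D^2V$, so the paper's step ``$D^2V(x)>C$ for $\abs{x}$ large'' is really leaning on more than~\eqref{V2} literally states. Conversely, the paper's argument (granting its reading of the hypotheses) yields uniform integrability of $\abs{x}^2$ itself, hence in principle relative compactness in $W_2$, which your quadratic-growth argument genuinely cannot give; for the stated $W_{2-\eps}$ conclusion the two are equivalent. One caveat on your lower semicontinuity step: the entropy $\rho\mapsto\int\rho\log\rho\dx{x}$ taken \emph{alone} is not narrowly l.s.c.\ on $\cP(\R^d)$ --- a vanishing fraction of mass flattened over a huge ball drives it to $-\infty$ while the measures still converge narrowly --- so the termwise decomposition needs either a uniform moment bound to control the negative part of $\rho\log\rho$, or the standard repackaging of $\int\rho\log\rho+\int V\dx{\rho}$ as a relative entropy with respect to $e^{-V}\dx{x}$, which is jointly l.s.c. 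This is the same level of care the paper elides by citing standard references, but since you make the termwise claim explicit, you should insert one of these two fixes.
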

\begin{proof}
The fact that $E_{MV}$ is proper and l.s.c, i.e.~\eqref{ass1}, follows from standard results (cf.~\cite[Lemma 4.3.1]{JLJ98}) and Fatou's lemma.
In order to prove~\eqref{ass2}, we use the fact that assumptions~\eqref{V1} and~\eqref{V2} imply that 
\begin{align}
\lim_{\abs{x} \to \infty}\bra*{\frac{V(x)}{\abs{x}^2}-1} = +\infty \, . 
\end{align}
Indeed, for all $C>0$ we can find $x_0 \in \R^d$ such that $D^2 V(x) >C$ for all $\abs{x}>\abs{x_0}$, such that

\begin{align}
\lim_{\abs{x} \to \infty}\frac{V(x)}{\abs{x}^2}-1 = +\infty \, . 
\end{align}
Thus, we can find a ball of size $R_0$ such that
\begin{align}
\frac{V(x)}{\abs{x}^2}-1 >1  ,\label{gt1}
\end{align}
for all $x \in B_{R_0}^c$. Now, using~\eqref{W} we have that 
\begin{align}
\frac{1}{2}\int_{\R^d}\bra*{W * \rho}(x) \dx{\rho}(x) \geq 0 \, .
\end{align}
Also, we have the following bound for the entropic term
\begin{align}
\int_{\R^d} \rho \log \rho \dx{x} &= \int_{\R^d} \rho \log \frac{\rho}{e^{-\abs{x}^2}} \dx{x} - \int_{\R^d} \abs{x}^2 \dx{\rho} \\
& \geq C_1 - \int_{\R^d} \abs{x}^2 \dx{\rho} \,  \, ,
\end{align}
where in the last step we simply apply Jensen's inequality. Putting these together we have that any element $\rho$ of a sublevel set of the energy $E_{MV}$, $L_{\leq C}(E_{MV}), C \in \R$ must satisfy
\begin{align}
\int_{\R^d}\bra*{\frac{V(x)}{\abs{x}^2}-1} \abs{x}^2 \dx{\rho}(x) \leq C -C_1 =:C_2 \, .
\end{align}
Fix some $\delta>0$ and consider $R_1 >R_0$. We then have for any $\rho \in L_{\leq C}$ (which is necessarily absolutely continuous) that
\begin{align}
\int_{B_{R_1}}\abs{x}^2 \dx{\rho} =\int_{\R^d} \abs{x}^2 \dx{\rho}- \int_{B_{R_1}^c} \abs{x}^2 \dx{\rho} \, .
\end{align}
Applying~\eqref{gt1}, we have that
\begin{align}
\frac{V(x)}{\abs{x}^2} -1 > \log\bra*{ \frac{\inf_{\partial B_{R_1}} V(x)}{R_1^2} -1 }, \quad \textrm{ for all }x \in B_{R_1}^c \, .
\end{align}
Thus, we obtain 
\begin{align}
\int_{B_{R_1}}\abs{x}^2 \dx{\rho} &=\int_{\R^d} \abs{x}^2 \dx{\rho}- \int_{B_{R_1}^c} \abs{x}^2 \dx{\rho}\\
& >\int_{\R^d} \abs{x}^2 \dx{\rho}- \frac{1}{\log\bra*{ \frac{\inf_{\partial B_{R_1}} V(x)}{R_1^2} -1 }}\int_{B_{R_1}^c}\bra*{ \frac{V(x)}{\abs{x}^2} -1 } \abs{x}^2 \dx{\rho} \\
& \geq \int_{\R^d} \abs{x}^2 \dx{\rho}- \frac{1}{\log\bra*{ \frac{\inf_{\partial B_{R_1}} V(x)}{R_1^2} -1 }} (C-C_1) \, .
\end{align}
Making $R_1$ large enough (independent of the choice of  $\rho \in L_{\leq C}$), we can obtain
\begin{align}
\int_{B_{R_1}}\abs{x}^2 \dx{\rho} \geq  \int_{\R^d} \abs{x}^2 \dx{\rho}- \delta \, .
\end{align}
Thus, we have
\begin{align}
\int_{B_{R_1}^c}\abs{x}^2 \dx{\rho} \leq \delta \, .
\end{align}
Thus, by Prokhorov's theorem, we have compactness of $L_{\leq C}(E_{MV})$ in $\cP_{2-\eps}(\R^d)$ with respect to the metric $W_{2-\eps}(\cdot,\cdot)$. for all $0<\eps\leq1$. Thus,~\eqref{ass2} is satisfied.
\end{proof}

\begin{remark}
Note that the compactness we have derived is stronger than that required by~\eqref{ass2}. Indeed, we have obtained compactness of the sublevel set $L_{\leq C}$ in $\cP_{2-\eps}(\R^d)$ equipped with $W_{2-\eps}$ rather than with respect to the $\sigma$-topology on $\cP_2(\R^d)$.
\end{remark}

\begin{lemma}
Under the assumptions~\eqref{V1}, \eqref{V2}, and ~\eqref{W}, the weak upper gradient $G_{MV}$ satisfies~\eqref{ass3}. 
\label{ass3MV}
\end{lemma}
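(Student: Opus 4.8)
The plan is to recognise $G_{MV}$ as a generalised relative Fisher information and to reduce the statement to the abstract lower semicontinuity lemma of~\cref{sec:lsc}. Expanding the logarithm, whenever $\rho$ is absolutely continuous with $\sqrt{\rho} \in \SobH^1_{\loc}(\R^d)$ one has
\[
G_{MV}(\rho) = \int_{\R^d} \abs*{\nabla \log \rho + \nabla V + \nabla W * \rho}^2 \dx{\rho} ,
\]
and $G_{MV}(\rho) = +\infty$ otherwise, so $G_{MV}$ is exactly of the form~\eqref{eq:LSC} with the $\rho$-dependent drift field $\mathbf{b}[\rho] := \nabla V + \nabla W * \rho$. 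It therefore suffices to verify the hypotheses of the appendix lemma for this drift, exactly as in the Fokker--Planck and Kuramoto examples. The only genuinely new feature compared with the compact torus computation of~\cref{sec:kuramoto} is that here both $\nabla V$ and $\nabla W$ are unbounded on $\R^d$, so the stability of the drift under weak convergence must be argued with some care.

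To see why such a functional is $\weaker$-lower semicontinuous (which is the content of the appendix lemma) I would pass to the momentum formulation. Setting $\mathbf{j}[\rho] := \nabla \rho + \mathbf{b}[\rho]\,\rho$, we have $G_{MV}(\rho) = \int_{\R^d} \abs{\mathbf{j}[\rho]}^2/\rho \dx{x}$, and since $(s,\mathbf{v}) \mapsto \abs{\mathbf{v}}^2/s$ is jointly convex and lower semicontinuous, the map $(\rho,\mathbf{j}) \mapsto \int \abs{\mathbf{j}}^2/\rho$ is jointly lower semicontinuous for the weak-$*$ convergence of the pair of (vector) measures. Given $\rho_n \overset{\sigma}{\to} \rho$ with $\liminf_n G_{MV}(\rho_n) =: L < \infty$, I would pass to a subsequence realising the liminf with $G_{MV}(\rho_n)$ bounded; by Cauchy--Schwarz $\int \abs{\mathbf{j}[\rho_n]} \dx{x} \le G_{MV}(\rho_n)^{1/2}$ is bounded, so (up to a further subsequence) $\mathbf{j}[\rho_n] \weakstar \mathbf{j}_*$, and joint lower semicontinuity gives $\int \abs{\mathbf{j}_*}^2/\rho \le L$. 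It then remains only to identify $\mathbf{j}_* = \mathbf{j}[\rho]$, for then $G_{MV}(\rho) \le L$ as required.

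The identification is where the real difficulty lies, and I expect it to be the main obstacle. Testing against a vector field $\xi \in C_c^\infty(\R^d;\R^d)$, the contribution $\nabla \rho_n \to \nabla \rho$ holds in $\mathcal{D}'$ after integrating by parts (since $\Div \xi \in C_b$), and $\nabla V\,\rho_n \to \nabla V\,\rho$ because $\nabla V \cdot \xi \in C_c$; the delicate term is $(\nabla W * \rho_n)\rho_n$, which pairs with $\xi$ through $\iint \nabla W(x-y)\cdot \xi(x)\,\dx{\rho_n}(x)\dx{\rho_n}(y)$. Because~\eqref{W} only asks that $W \in C^2$ be even and positive, $\nabla W$ may grow superlinearly, so the kernel $\nabla W(x-y)\cdot\xi(x)$ is continuous but not in $C_b(\R^d\times\R^d)$; hence narrow convergence of $\rho_n\otimes\rho_n$ does \emph{not} by itself pass to the limit (without moment control even the first moment, and thus $\nabla W * \rho_n$, can fail to converge). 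I would resolve this by exploiting that~\eqref{ass3} is invoked along trajectories confined to sublevel sets of $E_{MV}$, on which~\cref{ass12MV} (using~\eqref{V1}--\eqref{V2} and~\eqref{W}) provides uniform moment bounds; these upgrade narrow convergence to uniform integrability of $\nabla W(x-\cdot)$ against $\{\rho_n\}$ for $x$ in compacts, via a truncation of $\nabla W$ whose error is controlled by the moment bound. Care is needed to match the growth of $\nabla W$ to the order of moment available, and in borderline cases one bootstraps integrability from the finiteness of $G_{MV}(\rho_n)$ itself; once the convolution term passes to the limit one obtains $\mathbf{j}_* = \mathbf{j}[\rho]$, completing the proof. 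On the torus this step is immediate precisely because every field involved is bounded.
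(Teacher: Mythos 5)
Your first paragraph already contains the paper's entire proof: the authors simply observe that $G_{MV}$ extends trivially to $\cP(\R^d)$, that the extension is of the form~\eqref{eq:LSC} with $v_\rho = \nabla \log \rho + \nabla V + \nabla W * \rho$ and $p=2$, and then invoke~\cref{thm:LSC}; $\sigma$-lower semicontinuity on $\cP_2(\R^d)$ follows by restriction. So by the paper's standards you are done after the reduction, and everything that follows is you re-proving~\cref{thm:LSC} rather than citing it. Your instinct that the identification of the limit flux is where the real content lies is nonetheless sound: the appendix proof extracts, via~\cite[Theorem 5.4.4]{AGS08}, a limit field $v$ with $v_n \rho_n \weakstar v\rho$ and $\norm{v}_{\Leb^2(\rho)} \leq \liminf_n \norm{v_n}_{\Leb^2(\rho_n)}$, and the step $v = v_\rho$ --- precisely your $\mathbf{j}_* = \mathbf{j}[\rho]$ --- is exactly where the $\rho$-dependence of the drift must be confronted. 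Your route through the flux $\mathbf{j}[\rho] = \nabla\rho + (\nabla V + \nabla W*\rho)\rho$ and the joint convexity of $(s,\mathbf{v}) \mapsto \abs{\mathbf{v}}^2/s$ is a correct and standard alternative to that citation, and your treatment of the $\nabla\rho$ and $\nabla V \rho$ terms is fine.

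The gap is in your resolution of the convolution term, in two respects. First, you quietly weaken the statement: \eqref{ass3} asserts lower semicontinuity of $G_{MV}$ on all of $\cP_2(\R^d)$, whereas restricting to energy-sublevel sets (which would indeed suffice for the application in~\cref{thm:omegastat}) proves something strictly weaker; if you take that route you must say so and check it is compatible with how~\eqref{ass3} is used. Second, and more seriously, the truncation argument does not close under~\eqref{W} as stated: \cref{ass12MV} supplies only uniform second-moment tightness, while~\eqref{W} imposes no growth restriction whatsoever on $\nabla W$ --- a $C^2$, even, positive $W$ may have $\abs{\nabla W(z)}$ growing faster than any polynomial --- so ``matching the growth of $\nabla W$ to the order of moment available'' is impossible in general, and the proposed bootstrap from the finiteness of $G_{MV}(\rho_n)$ is asserted rather than carried out. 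As written, then, the proposal is an honest but incomplete argument whose completed portion coincides with what the paper actually writes; to genuinely close the identification step one needs either a growth hypothesis tying $\nabla W$ to $W$ or to $\abs{x}^2$, or a different argument.
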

\begin{proof}
We note that $G_{MV}$ can be trivially extended to a function $\tilde{G}_{MV}: \cP(\R^d) \to [0,\infty]$ such that
$\tilde{G}_{MV}(\rho)=G_{MV}(\rho)$ for all $\rho \in \cP_2(\R^d)$. $\tilde{G}_{MV}$ is of the form~\eqref{eq:LSC} and thus is l.s.c. on $\cP(\R^d)$ by~\cref{thm:LSC}. It follows that $G_{MV}$ is l.s.c with respect to the $\weaker$-topology on $\cP_2(\R^d)$. 
\end{proof}
We conclude this subsection with the following convergence result which recovers the main results of~\cite{Tug13}:
\begin{proposition}
Let $\rho \in AC([0,\infty);\cP_2(\R^d))$ be a $2$-curve of maximal slope of the energy $E_{MV}$ with respect to the weak upper gradient $G_{MV}$, for some initial datum $\rho_0 \in \cP_2(\R^d)$ with $E_{MV}(\rho_0)< \infty$. Then,
\begin{align}
\lim_{t \to \infty} W_{2-\eps}(\rho(t),\cE^{MV})=0 \, ,
\end{align}
for all $0<\eps\leq1$ where $\cE^{MV} \subseteq \cP_2(\R^d)$ is the set of stationary states of the $2$-curve of maximal slope associated to $E_{MV}$ and $G_{MV}$.
\end{proposition}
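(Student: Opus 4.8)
The strategy is to obtain convergence in the narrow topology from the abstract result \cref{thm:converge} and then to upgrade it to convergence in $W_{2-\eps}$ by means of the metric comparison in \cref{lem:curveconv}, exploiting the fact that \cref{ass12MV} furnishes compactness of sublevel sets in the \emph{stronger} $W_{2-\eps}$-metric rather than merely in the $\sigma$-topology.

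First I would record that all hypotheses of \cref{thm:converge} are in place. Existence of a $2$-curve of maximal slope for every $\rho_0 \in Z_{E_{MV},2}$ is guaranteed by~\cite[Theorem 11.2.8]{AGS08}, as noted above. \cref{ass12MV} shows that $E_{MV}$ satisfies~\eqref{ass1} and~\eqref{ass2}, while \cref{ass3MV} shows that $G_{MV}$ satisfies~\eqref{ass3}. Applying \cref{thm:converge} therefore yields
\[
\lim_{t\to\infty} d_{\sigma}(\rho(t),\cE^{MV}) = 0 \, ,
\]
where $d_{\sigma}$ metrises the narrow topology on $\cP_2(\R^d)$ and the set of stationary states $\cE^{MV}$ is nonempty by \cref{thm:omegastat,corollary:nonempty}.

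To upgrade this to $W_{2-\eps}$, I would apply \cref{lem:curveconv} on the ambient set $Y=\cP_{2-\eps}(\R^d)$, which contains both the trajectory (since $\cP_2(\R^d)\subset\cP_{2-\eps}(\R^d)$) and $Y^{*}=\cE^{MV}$. Take $d_1=d_{LP}$, which metrises the narrow topology, and $d_2=W_{2-\eps}$; since $W_{2-\eps}$-convergence implies narrow convergence, $d_2$ induces a finer topology than $d_1$, exactly as required. Because $t\mapsto E_{MV}(\rho(t))$ is non-increasing with $E_{MV}(\rho(0))=E_{MV}(\rho_0)<\infty$, the whole trajectory lies in the sublevel set $L_{\leq E_{MV}(\rho_0)}(E_{MV})$, which by \cref{ass12MV} is compact with respect to $W_{2-\eps}$ in $\cP_{2-\eps}(\R^d)$. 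Hence $\bigcup_{t\geq 0}\set{\rho(t)}$ is relatively compact in the $W_{2-\eps}$-topology, which is precisely what the proof of \cref{lem:curveconv} uses (one only extracts a $W_{2-\eps}$-convergent subsequence). Identifying the narrow convergence above with convergence in $d_1$, \cref{lem:curveconv} then gives $\lim_{t\to\infty} W_{2-\eps}(\rho(t),\cE^{MV})=0$ for each fixed $0<\eps\leq 1$, as claimed.

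The genuine obstacle has already been dispatched in \cref{ass12MV}: the uniform tail estimate $\int_{B_R^c}\abs{x}^2\dx{\rho}\to 0$ over sublevel sets, derived from~\eqref{V1}--\eqref{V2}, is what makes those sets precompact in $W_{2-\eps}$ rather than only narrowly. The one conceptual point in the present proof is to recognise that the upgrade must be carried out in the \emph{larger} space $\cP_{2-\eps}(\R^d)$: there $W_{2-\eps}$ is still strictly finer than the narrow topology, yet the sacrifice of a fractional power of the second moment is exactly what converts the uniform integrability bound into compactness, so that \cref{lem:curveconv} applies for every $0<\eps\leq 1$.
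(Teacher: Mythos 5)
Your proposal is correct and follows the paper's own argument exactly: apply \cref{thm:converge} (via \cref{ass12MV,ass3MV}) to obtain $d_\sigma$-convergence to $\cE^{MV}$, then upgrade to $W_{2-\eps}$ using \cref{lem:curveconv} together with the $W_{2-\eps}$-compactness of sublevel sets established in \cref{ass12MV}. You have merely spelled out the details (the choice of ambient space $\cP_{2-\eps}(\R^d)$ and of the metrics $d_1=d_{LP}$, $d_2=W_{2-\eps}$) that the paper leaves implicit.
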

\begin{proof}
Since by~\cref{ass12MV,ass3MV}, ~\eqref{ass1},~\eqref{ass2}, and~\eqref{ass3} are satisfied, we have by application of~\cref{thm:converge} that
\begin{align}
\lim_{t \to \infty} d_{\sigma}(\rho(t),\cE^{MV})=0 \, .
\end{align}
Using~\cref{lem:curveconv} and the stronger $W_{2-\eps}$ compactness from~\cref{ass12MV}, the result of the proposition follows.
\end{proof}
\begin{remark}
As shown in ~\cite{Da83,Tu13,BCCD16,GP18,li2019flocking} in the case of bistable or multivalleys confinement potentials, McKean-Vlasov equations posed on $\R^d$ of the form discussed in~\eqref{tugaut} also exhibit the phenomenon of phase transitions (cf.~\cref{sec:kuramoto}). A typical example of such a system is provided by the so-called Desai--Zwanzig model. To obtain this model, we set in one dimension
$$
V(x)=x^2- \frac{x^4}{2}
\qquad
\mbox{and} \qquad
W_\kappa(x)=\kappa\dfrac{x^2}{2}
$$
in~\eqref{tugaut}. This system also exhibits a phase transition, i.e. as the value of the parameter $\kappa$ is increased, the system goes from exhibiting one stationary solution to exactly three. We remark that the results we present in~\cref{sec:kuramoto}, especially~\cref{prop:kuraconverge},  hold true analogously in the setting of  the Desai--Zwanzig model or in more complicated models of phase transitions in higher dimensions as discussed in~\cite{Tu13,li2019flocking}. Furthermore, for the Desai--Zwanzig model, we can apply~\Dref{prop:entlimit!c} to argue that the solution converges to a unique stationary state (due to the finiteness of the set of stationary solutions). 
\end{remark}

\subsection{Aggregation-diffusion equation}
\label{sec:aggdiff}
In this section we study the aggregation diffusion equation equation given by
\begin{align}
\partial_t \rho =  \Delta (\rho^m) + \nabla \cdot(\rho \nabla W * \rho) \qquad (t,x) \in (0,\infty) \times \R^d \label{eq:aggdiffRd}
\end{align}
in the regime discussed in~\cite{CHVY19, DYY19}, see \cite{CCY19} for a recent survey. We place the following assumptions on the interaction potential $W$
\begin{align}
&W \in C^\infty(\R^d) \textrm{ is radially symmetric, non-negative, and } W'(r)>0, \textrm{} r>0 \tag{W2} \label{ass:W2} \\
&\textrm{There exists some } C_W>0 \textrm{ such that } W'(r) \leq C_W \tag{W3} \label{ass:W3}  \\
&\textrm{For all } a,b \geq 0 \textrm{ it holds that } W(a+b) \leq C_W (1 + W(1+a) + W(1+b)) \tag{W4}  \label{ass:W4} \\
&W \textrm{ is }\lambda\textrm{-convex} \tag{W5}  \label{ass:W5} \\
& \lim_{r \to \infty} W(r)=+\infty \tag{W6}  \label{ass:W6} \\  
&  \bra*{\lim_{r \to \infty} W(r)}- W \in \Leb^p(\R^d) \textrm{ for some }1 \leq p < \infty \tag{W7}  \label{ass:W7}  
\end{align}
Note that we have modified the assumptions of~\cite{CHVY19, DYY19} so as to ensure that curves of maximal slope exist from~\cite[Theorem 11.2.8]{AGS08}. We have excluded singular interactions so that we do not have to deal with technical difficulties in the gradient flow formulation of~\eqref{eq:aggdiffRd} arising from singularities at the origin. Before proceeding to state and prove our results on convergence for~\eqref{eq:aggdiffRd}, we refer the reader to~\cite{Shu2020} where convergence to the steady state for~\eqref{eq:aggdiffRd} is discussed for $d=1$ and with less restrictive growth assumptions on $W$. 
\begin{proposition}\label{prop:comsaggdiff2}
Let $\rho_0 \in Z_{E_{AD},2}$. Then, under assumptions~\eqref{ass:W2}, \eqref{ass:W3}, and \eqref{ass:W4}, there exists a unique $2$-curve of maximal slope $\rho \in AC([0,\infty); \cP_2(\R^d))$ of the energy
$$
E_{AD}(\rho):= \frac{1}{m-1} \int_{\R^d} \rho(x)^m \dx{x} + \frac{1}{2} \iint_{\R^d \times \R^d} W(x-y) \dx{\rho}(x) \dx{\rho}(y) \, ,
$$
with respect to the weak upper gradient
$$
G_{AD}(\rho):= \bra*{\int_{\R^d}\abs*{\nabla\bra*{\frac{m}{m-1}\rho^{m-1}+ W * \rho }}^2\dx{\rho}(x)}^{1/2} \, ,
$$
 such that $\rho(0)=\rho_0$. Furthermore, curves of maximal slope are equivalent to weak solutions of~\eqref{eq:aggdiffRd}. 
\end{proposition}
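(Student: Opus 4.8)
The plan is to verify the hypotheses of the abstract existence theorem for curves of maximal slope in the Wasserstein space, namely \cite[Theorem 11.2.8]{AGS08}, for the specific energy $E_{AD}$ and then to identify the resulting curve of maximal slope with a weak solution of~\eqref{eq:aggdiffRd}. The key structural features to exploit are that the diffusion term $\frac{1}{m-1}\int \rho^m$ is a geodesically convex internal energy (McCann's convexity condition for $F(s)=\frac{1}{m-1}s^m$ holds for $m \geq 1$, since $s \mapsto s^d F(s^{-d})$ is convex and non-increasing) and that the interaction energy is $\lambda$-convex precisely because $W$ is $\lambda$-convex by~\eqref{ass:W5}. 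Together these give that $E_{AD}$ is $\lambda$-geodesically convex along generalised geodesics, which is the central hypothesis needed for \cite[Theorem 11.2.8]{AGS08} to yield existence and uniqueness of the $2$-curve of maximal slope.

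First I would check that $E_{AD}$ is proper, bounded below, and lower semicontinuous on $\cP_2(\R^d)$. Lower boundedness of the diffusion term is immediate since $\frac{1}{m-1}\rho^m \geq 0$, and the interaction term is non-negative by~\eqref{ass:W2}; lower semicontinuity follows from the standard results invoked elsewhere in the paper (cf.~\cite[Lemma 4.3.1]{JLJ98}) together with Fatou's lemma, as done for $E_{FP}$ and $E_{MV}$. The growth assumptions~\eqref{ass:W3} and~\eqref{ass:W4} are the technical conditions ensuring the interaction potential generates a functional whose subdifferential is well-behaved and whose domain is compatible with finite second moments: \eqref{ass:W4} controls $W(x-y)$ by a sum of moment-type contributions so that the double integral is finite on $\cP_2(\R^d)$, while~\eqref{ass:W3} gives the linear growth of $\nabla W$ needed to control the velocity field. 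These are exactly the ingredients one must feed into the hypotheses of \cite[Theorem 11.2.8]{AGS08} to conclude existence, uniqueness (up to a.e.\ equality), and the identification of the metric subdifferential with $G_{AD}$.

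Next I would establish the identification of the curve of maximal slope with a weak solution of~\eqref{eq:aggdiffRd}. This is the standard computation of the Wasserstein subdifferential of $E_{AD}$: the first variation is $\frac{m}{m-1}\rho^{m-1} + W*\rho$, so the associated velocity field is $-\nabla\bigl(\frac{m}{m-1}\rho^{m-1}+W*\rho\bigr)$, and the continuity equation $\partial_t\rho + \nabla\cdot(\rho v)=0$ with this $v$ is precisely~\eqref{eq:aggdiffRd} after computing $\nabla\cdot(\rho\nabla\frac{m}{m-1}\rho^{m-1}) = \Delta(\rho^m)$. The metric slope then squares to $G_{AD}^2$, matching the stated weak upper gradient. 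The equivalence between the metric formulation~\eqref{eq:maxslope} and the weak PDE formulation again follows from \cite[Theorem 11.2.8]{AGS08}, which guarantees that under $\lambda$-convexity the curve of maximal slope coincides with the $\mathrm{EVI}$ gradient flow, whose velocity is the minimal-norm element of the subdifferential.

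The main obstacle I anticipate is verifying the $\lambda$-geodesic convexity of $E_{AD}$ along generalised geodesics in a way that genuinely meets the precise hypotheses of \cite[Theorem 11.2.8]{AGS08}, rather than just convexity along straight-line geodesics. For the internal energy this requires McCann's displacement convexity condition for $F(s)=\frac{1}{m-1}s^m$ with $m \geq 1$, which is classical but must be stated carefully. For the interaction term the delicate point is that $\lambda$-convexity of $W$ on $\R^d$ together with the growth control~\eqref{ass:W4} must be shown to imply $\lambda$-convexity of the functional $\rho \mapsto \frac{1}{2}\iint W(x-y)\dx{\rho}(x)\dx{\rho}(y)$ along generalised geodesics; here the evenness of $W$ (which is not explicitly assumed but would follow from radial symmetry in~\eqref{ass:W2}) and the subquadratic-type control from~\eqref{ass:W3}–\eqref{ass:W4} are what prevent the convexity modulus from degenerating. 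Once these convexity verifications are in place, the remainder is a direct citation of \cite[Theorem 11.2.8]{AGS08}, paralleling the proofs given for~\cref{prop:statfp} and for the McKean--Vlasov equation.
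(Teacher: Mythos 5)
Your proof is correct and follows essentially the same route as the paper: both arguments reduce to checking the doubling condition~\eqref{ass:W4} and the $\lambda$-convexity~\eqref{ass:W5} of $W$ and then invoking an abstract well-posedness theorem for $\lambda$-convex energies (the paper cites~\cite[Theorem 4.20]{DS10} rather than~\cite[Theorem 11.2.8]{AGS08}, but the machinery is the same). As you rightly observed, the argument genuinely uses~\eqref{ass:W5}, which the proposition statement omits from its list of hypotheses --- an omission present in the paper's own proof as well.
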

\begin{proof}
The proof of this result is an application of~\cite[Theorem 4.20]{DS10}. We just need to check that $W$ satisfies the so-called doubling condition and is $\lambda$-convex. The doubling condition is exactly~\eqref{ass:W4}. The fact that it is $\lambda$-convex is exactly~\eqref{ass:W5}.
\end{proof} 

We now characterise the minimisers of the associated free energy:
\begin{proposition}[Existence and uniqueness of minimisers]\label{prop:exunminRd}
Assume that $m\geq 2$ and that assumptions~\eqref{ass:W2},~\eqref{ass:W3}, \eqref{ass:W4}, and either~\eqref{ass:W6} or~\eqref{ass:W7}, are satisfied. Note that $E_{AD}$ and $G_{AD}$ have extensions to $\cP(\R^d)$ in the sense of~\cref{def:wss}. Then, $\tilde{E}_{AD}$ has a unique (up to translations) minimiser $\rho_* \in \cP(\R^d) \cap \Leb^\infty(\R^d)$ such that $\rho_*$ is compactly supported, radially symmetric and decreasing. Furthermore, $\rho^*$ is the unique weak stationary state of the associated curve of maximal slope.
\end{proposition}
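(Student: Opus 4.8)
The plan is to build the minimiser by the direct method, to extract its qualitative features from the associated Euler--Lagrange equation, and to settle uniqueness by displacement convexity. Throughout I work on $\cP(\R^d)$ with the extension $E := \tilde{E}_{AD}$ and abbreviate $K_\infty := \lim_{r \to \infty} W(r)$.

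First I would observe that $E$ is bounded below, since both the internal energy $\frac{1}{m-1}\int \rho^m$ and the interaction energy $\frac{1}{2}\iint W \, d\rho \, d\rho$ are non-negative (as $m > 1$ and $W \geq 0$ by \eqref{ass:W2}). Taking a minimising sequence $(\rho_n)$, I note that the internal energy is invariant under symmetric decreasing rearrangement whereas the interaction energy does not increase under it: by the Riesz rearrangement inequality applied to the radially decreasing kernel $K_\infty - W$ (equivalently, rearrangement maximises $\iint (K_\infty - W)\, d\rho \, d\rho$), I may replace each $\rho_n$ by its rearrangement and assume every $\rho_n$ is radially symmetric and non-increasing. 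Compactness is the first substantive point. Under \eqref{ass:W6} the interaction energy is confining: at least half the mass of a radially decreasing $\rho_n$ sits in a fixed ball, so $W * \rho_n(x) \to \infty$ as $\abs{x} \to \infty$ uniformly in $n$, and an energy bound forces tightness and rules out escape of mass. Under \eqref{ass:W7} one instead rewrites $E$ using $K_\infty - W \in \Leb^p$ and rules out vanishing and dichotomy by a concentration-compactness argument. In either regime $\rho_n$ converges narrowly and weakly in $\Leb^m$ to some radially decreasing $\rho_* \in \cP(\R^d)$, and lower semicontinuity of the internal energy (by convexity) together with lower semicontinuity of the interaction energy (Fatou for the non-negative kernel under \eqref{ass:W6}; continuity of the bounded kernel against the tight product measures under \eqref{ass:W7}) gives $E(\rho_*) \leq \liminf_n E(\rho_n)$, so $\rho_*$ is a minimiser.

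Next I would read off the regularity and support of $\rho_*$ from the Euler--Lagrange condition. The first variation subject to the mass constraint yields a Lagrange multiplier $\lambda_0$ with
\[
\frac{m}{m-1}\rho_*^{m-1} + W * \rho_* = \lambda_0 \quad \text{on } \set{\rho_* > 0}, \qquad \geq \lambda_0 \quad \text{elsewhere},
\]
so that $\rho_* = \bra[\big]{\tfrac{m-1}{m}(\lambda_0 - W*\rho_*)_+}^{1/(m-1)}$. Since $W * \rho_* \geq 0$, this gives $\rho_* \leq (\tfrac{m-1}{m}\lambda_0)^{1/(m-1)}$, hence $\rho_* \in \Leb^\infty(\R^d)$. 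Compact support follows because $\rho_*$ is positive only where $W * \rho_* < \lambda_0$: under \eqref{ass:W6} we have $W * \rho_*(x) \to \infty$, and under \eqref{ass:W7} one first argues $\lambda_0 < K_\infty = \lim_{\abs{x}\to\infty} W * \rho_*(x)$ (otherwise the pointwise lower bound above contradicts $\int \rho_* = 1$), so in both regimes $W * \rho_* > \lambda_0$ outside a ball and $\rho_*$ vanishes there. Radial symmetry and monotonicity (up to a translation) are already built in from the rearrangement step, the degeneracy reflecting the translation invariance of both terms of $E$.

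Finally, for uniqueness I would invoke displacement convexity. For $m \geq 2 > 1 - 1/d$ the internal energy $\frac{1}{m-1}\int\rho^m$ is strictly displacement convex in McCann's sense, while the interaction energy is displacement convex when $W$ is convex; by \eqref{ass:W5} it is $\lambda$-convex, so $E$ is strictly displacement convex along every Wasserstein geodesic that is not a rigid translation. A strictly displacement convex functional has at most one minimiser modulo its invariances, forcing any two minimisers to be translates; the same convexity identifies critical points with the minimiser, so the unique (up to translation) minimiser is also the unique zero of $\tilde{G}_{AD}$, i.e. the unique weak stationary state of \cref{def:wss} via \cref{lemma:equil}. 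The main obstacle is the case $\lambda < 0$ in \eqref{ass:W5}: there the interaction energy is only $\lambda$-displacement convex, and establishing net strict convexity requires quantitatively dominating this deficit by the strict convexity of the internal energy (using $m \geq 2$ and the uniform bounds on $\rho_*$ and on $W'$ from \eqref{ass:W3}); verifying this, together with ruling out the borderline value $\lambda_0 = K_\infty$ in the compact-support argument under \eqref{ass:W7}, is where the real work lies.
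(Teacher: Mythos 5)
There is a genuine gap, and it sits at the heart of the statement: your uniqueness argument via displacement convexity does not go through. The hypotheses only give \eqref{ass:W5}, i.e.\ $\lambda$-convexity of $W$ with $\lambda$ possibly negative (a smooth, radially increasing, bounded-derivative $W$ as in \eqref{ass:W2}--\eqref{ass:W3} is generically \emph{not} convex), so the interaction energy contributes a negative modulus of displacement convexity. The internal energy $\frac{1}{m-1}\int\rho^m$ is displacement convex for $m\geq 2$, but its convexity along Wasserstein geodesics is degenerate — there is no uniform positive modulus that could ``quantitatively dominate'' a deficit $-\abs{\lambda}W_2^2$; indeed it is exactly zero along translations and can be made arbitrarily small along suitable geodesics. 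You flag this yourself as ``where the real work lies,'' but that work is not a technical verification: it is the entire content of the uniqueness theorem. The paper does not attempt it; its proof is a chain of citations — boundedness of stationary states from~\cite[Lemma 3.9]{CHVY19}, compact support/radial symmetry/monotonicity from~\cite[Theorem 2.2]{CHVY19}, and, crucially, uniqueness up to translation from~\cite[Theorem 1.1]{DYY19}, a result proved for $m\geq 2$ by entirely different means (a continuous-Steiner-symmetrisation/interpolation argument), precisely because the displacement-convexity route is unavailable. The same failure infects your claim that ``the same convexity identifies critical points with the minimiser'': without genuine convexity you cannot conclude that every zero of $\tilde{G}_{AD}$ is a minimiser, which is needed for the ``unique weak stationary state'' part of the statement (the paper instead passes through the equivalence of weak stationary states with stationary weak solutions via~\cite[Theorem 10.4.13]{AGS08} and then invokes the classification of the latter).

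Two smaller points. First, your rearrangement step applies the Riesz inequality to the kernel $K_\infty - W$, which is meaningless under \eqref{ass:W6} where $K_\infty=+\infty$; one needs a truncation of $W$ and a monotone-convergence passage, which you omit. Second, the existence of a minimiser under \eqref{ass:W7} requires ruling out dichotomy in the concentration-compactness alternative, which for a purely attractive but bounded $W$ is not automatic and is only asserted in your sketch. The Euler--Lagrange analysis giving $\rho_*\in\Leb^\infty$ and compact support of minimisers is sound in outline and is close in spirit to what~\cite{CHVY19} actually proves, but on its own it does not rescue the uniqueness claim.
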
 

\begin{proof}
The proof of this result is a combination of results from~\cite{CHVY19,DYY19}. We first apply~\cite[Lemma 3.9]{CHVY19} to argue that all stationary weak solutions of~\eqref{eq:aggdiffRd} are essentially bounded. It then follows from~\cite[Theorem 2.2]{CHVY19} that any stationary solution is compactly supported, radially symmetric, and decreasing (up to a translation). Finally, from~\cite[Theorem 1.1]{DYY19} for $m \geq 2$, we know that stationary solutions are unique up to translations. Since, by the result of~\cite[Theorem 10.4.13]{AGS08}, stationary weak solutions of~\eqref{eq:aggdiffRd} are equivalent to weak stationary states, we have that there exists a unique up to translation weak stationary state, $\rho^* \in \cP(\R^d) \cap \Leb^\infty(\R^d)$,  of the associated curve of maximal slope which compactly supported, radially symmetric, and decreasing. Finally, we argue that $\tilde{E}_{AD}$ has a minimiser. Since any minimiser is a weak stationary state it follows that $\rho^*$ is also the unique minimiser of the free energy. \end{proof}
Motivated by the previous result we introduce the set of all radial probability measures centred at $x_0 \in \R^d$, which is defined as follows
\begin{align}
\cP^{R,x_0}_2(\R^d):= \set*{\rho \in \cP_2(\R^d):\rho(A-x_0)= \rho(\pi(A-x_0)), \forall \pi \in O(d), \forall A\subseteq \R^d \textrm{, measurable}} \, ,
\end{align} 
where $O(d)$ is the group of all $d \times d$ orthogonal matrices. We finally are the position to state our convergence result:
\begin{proposition}
Assume that $m\geq 2$ and that assumptions~\eqref{ass:W2},~\eqref{ass:W3}, \eqref{ass:W4}, and either~\eqref{ass:W6} or~\eqref{ass:W7}, are satisfied. Let $\rho \in AC([0,\infty);\cP_2(\R^d))$ be the unique $2$-curve of maximal slope of the energy $E_{AD}$ with respect to the weak upper gradient $G_{AD}$, for some initial datum $\rho_0 \in \cP_2^{R,x_0}(\R^d)$, $x_0 \in \R^d$, with $E_{MV}(\rho_0)< \infty$. Then,
\begin{align}
\lim_{t \to \infty} d_{\sigma}(\rho(t),\rho^*)=0 \, ,
\end{align}
where $\rho^* \in \Leb^\infty(\R^d) \cap \cP_2^{R,x_0}(\R^d)$ is as given in~\cref{prop:exunminRd}, i.e. it is the unique stationary weak solution of~\eqref{eq:aggdiffRd} and minimiser of $E_{AD}$ with mean $x_0 \in \R^d$, i.e.
\begin{align}
\int_{\R^d} (x-x_0) \dx{\rho^*}(x) =0 \, .
\end{align}
\end{proposition}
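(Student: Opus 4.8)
The plan is to obtain the statement from the $\cY$-restricted part of \cref{thm:converge} (or its $\cP(\R^d)$-analogue \cref{thm:converge'}) with the choice
\[
\cY:=\cP_2^{R,x_0}(\R^d),
\]
the set of measures radially symmetric about $x_0$. Once the hypotheses are in place, the abstract theorem yields $d_\sigma(\rho(t),\cE\cap\cY)\to 0$, and the whole proof reduces to showing that $\cE\cap\cY$ is the singleton $\set{\rho^*}$. The verification of~\eqref{ass1} and~\eqref{ass3} is routine and follows \cref{ass12MV,ass3MV} almost verbatim: $E_{AD}$ is proper, bounded below (both terms are non-negative, using $m\geq 2$ and $W\geq 0$ from~\eqref{ass:W2}) and l.s.c., while $G_{AD}$ has the structure~\eqref{eq:LSC} and is therefore l.s.c. by \cref{thm:LSC}; existence and uniqueness of the curve are granted by \cref{prop:comsaggdiff2}.

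I would next record two soft facts about $\cY$. First, $\cY$ is \emph{invariant} under the flow: for any $Q\in O(d)$ the change of variables $x\mapsto x_0+Q(x-x_0)$ leaves both $\Delta(\cdot)^m$ and $\nabla\cdot(\cdot\,\nabla W*\cdot)$ unchanged (here one uses crucially that $W$ is radial and that~\eqref{eq:aggdiffRd} contains no confining drift), so rotating a curve of maximal slope about $x_0$ produces another curve of maximal slope with the same initial datum $\rho_0\in\cY$; uniqueness from \cref{prop:comsaggdiff2} forces them to agree, so $\rho(t)\in\cY$ for all $t\geq 0$. This is exactly the symmetry argument already used for the Kuramoto model. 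Second, $\cY$ is closed in the narrow (hence $\sigma$-) topology, since radial symmetry about $x_0$ is preserved under narrow limits.

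The heart of the proof, and the step I expect to be the main obstacle, is the compactness assumption: the sublevel set $\set{\rho\in\cP_2(\R^d)\cap\cY:E_{AD}(\rho)\leq C}$ must be relatively compact. Its essential content is tightness, and here the radial constraint is indispensable --- a translation can push mass to infinity at bounded energy, but symmetry about the fixed centre $x_0$ cannot. Quantitatively, disintegrate a radial $\rho$ as $\rho=\int_0^\infty \nu_r\,d\mu(r)$, where $\nu_r$ is the uniform measure on the sphere of radius $r$ about $x_0$; then the angular parts of two independent samples are independent and uniform on $S^{d-1}$. If $\rho(\set{\abs{x-x_0}\geq R})\geq \eps$, then with probability at least $\eps^2 p_0$, where $p_0>0$ is the probability that two independent uniform directions subtend an angle larger than $\pi/2$, we draw two points both at radius $\geq R$ and hence at mutual distance $>\sqrt2\,R$. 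Combining this with the monotonicity in~\eqref{ass:W2} and the growth~\eqref{ass:W6},
\[
\frac12\iint_{\R^d\times\R^d} W(x-y)\dx{\rho}(x)\dx{\rho}(y)\;\geq\;\frac12\,\eps^2 p_0\,W(\sqrt2\,R)\longrightarrow +\infty
\]
as $R\to\infty$, which contradicts $E_{AD}(\rho)\leq C$; thus mass cannot escape and the sublevel set is tight. (In the regime~\eqref{ass:W7}, where $W$ is bounded, the corresponding tightness must instead be drawn from the concentration-compactness analysis underpinning \cref{prop:exunminRd}.) Since both $\rho(t)$ and $\rho^*$ lie in $\cP_2(\R^d)$, where $d_\sigma$ and $d_{LP}$ coincide, this tightness is all that is needed to run either \cref{thm:converge} or \cref{thm:converge'}.

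Finally I would identify the limit set. By \cref{prop:exunminRd} the weak stationary states are precisely the translates of $\rho^*$; since these are compactly supported they lie in $\cP_2(\R^d)$ and, by \cref{lemma:equil}, are genuine stationary states, so $\cE=\cE_w$ is exactly this family (translation invariance of~\eqref{eq:aggdiffRd} is what makes every translate stationary). Exactly one member is radial about $x_0$, namely the translate with mean $x_0$, whence $\cE\cap\cY=\cE_w\cap\cY=\set{\rho^*}$. Combining $\omega^\rho(\rho_0)\subseteq\cE$ from \cref{thm:omegastat}, the inclusion $\omega^\rho(\rho_0)\subseteq\cY$ from the invariance and closedness of $\cY$, and non-emptiness from \cref{corollary:nonempty}, we conclude $\omega^\rho(\rho_0)=\set{\rho^*}$, and hence $d_\sigma(\rho(t),\rho^*)\to 0$ as claimed.
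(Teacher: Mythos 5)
Your proposal is correct and follows essentially the same route as the paper: verify the assumptions for the extended functionals, prove tightness of the radial sublevel sets by exploiting the fact that symmetry about $x_0$ forces a definite fraction of the mass to sit far from any escaping mass (your angular/probabilistic estimate is a cosmetic variant of the paper's half-space argument, both in the spirit of~\cite[Theorem 2.1]{CCV15}), apply the abstract convergence theorem, and then use radiality of the limit points to single out the translate of $\rho^*$ centred at $x_0$ (the paper does this last step via the Portmanteau lemma rather than via closedness of $\cY$, but the content is identical). One small precision: your tightness argument yields relative compactness in $\cP(\R^d)$, i.e.~\eqref{ass2'subset}, not~\eqref{ass2subset} (limit points could a priori lose second moments), so the correct invocation is \cref{thm:converge'} followed by the observation that $\cE_w\subset\cP_2(\R^d)$ and $d_\sigma$ is the restriction of $d_{LP}$ --- which is exactly what the paper does; your ``either theorem'' hedge should be resolved in that direction. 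Finally, note that your deferral of the bounded-potential case~\eqref{ass:W7} mirrors a gap in the paper's own proof, whose final step also uses $W(R)\to\infty$ and hence only covers~\eqref{ass:W6}.
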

\begin{proof}
We first argue that if $\rho \in \cP^{R,x_0}_2(\R^d)$, then
$$
\int_{\R^d}(x-x_0) \dx{\rho}=0 \, .
$$
Indeed, we have that
\begin{align}
\int_{\R^d} (x-x_0) \dx{\rho} =\int_{\R^d } x \dx{(\tau_{-x_0}\rho)} \, ,
\end{align}
where $\tau_x \rho$ is characterised by
\begin{align}
(\tau_x\rho)(A)=\rho(A-x) \, ,
\end{align}
for all measurable $A \subseteq \R^d$. Note that if $\rho \in \cP^{R,x_0}_2(\R^d)$, then $\tau_{-x_0}\rho \in \cP^{R,0}_2(\R^d)$. We thus have that
 \begin{align}
\int_{\R^d} (x-x_0) \dx{\rho} =&\int_{\R^d } x \dx{(\tau_{-x_0}\rho)} \\
=& \int_{H_1^+ } x \dx{(\tau_{-x_0}\rho)} + \int_{H_1^- } x \dx{(\tau_{-x_0}\rho)} + \int_{H_1} x \dx{(\tau_{-x_0}\rho)} \, ,
\end{align}
where $H_i$ is the hyperplane given by
\begin{align}
H_i= \set{x \in \R^d:x_i=0} \, ,
\end{align}
and 
\begin{align}
H_i^+=& \set{x \in \R^d:x_i>0} \\
H_i^-=& \set{x \in \R^d:x_i<0} \, .
\end{align}
Choosing $\pi: \R^d \to \R^d, x \mapsto -x \in O(d) $ and using , we obtain
\begin{align}
\int_{\R^d} (x-x_0) \dx{\rho} =&\int_{H_1^+ } x \dx{(\tau_{-x_0}\rho)} + \int_{H_1^- } x \dx{(\tau_{-x_0}\rho)}(x) + \int_{H_1} x \dx{(\tau_{-x_0}\rho)} \\
=&\int_{H_1^+ } x \dx{(\tau_{-x_0}\rho)} - \int_{H_1^+ } x \dx{(\tau_{-x_0}\rho)} + \int_{H_1} x \dx{(\tau_{-x_0}\rho)} \\
=& \int_{H_1} x \dx{(\tau_{-x_0}\rho)} \, .
\end{align}
If $\rho \in \Leb^1(\R^d)$, the proof is complete since $H_1$ has Lebesgue measure zero. If not we can continue decomposing $H_1$ into $H_1 \cap H_2^+,H_1 \cap H_2^-,H_1 \cap H_2$ and apply the same argument. The argument terminates at some $H_i,i<d$, unless $\rho$ is concentrated at $x_0$ in which case the proof is trivial. Let $\rho \in AC([0,\infty); \cP_2(\R^d))$ be the curve of maximal slope defined in the statement of the proposition. Since $\rho$ is also a weak solution of~\eqref{eq:aggdiffRd}, testing against $x$ in the weak formulation of~\eqref{eq:aggdiffRd} we obtain
\begin{align}
\int_{\R^d}x \dx{\rho(t)} =x_0 \, ,
\end{align}
for all $t \geq 0$.  Furthermore, one can check that the weak formulation of~\eqref{eq:aggdiffRd} is invariant under the action of elements of $\pi \in O(d)$. Thus, if we set $\cY= \cP^{R,x_0}_2(\R^d)$ we have that $\bigcup_{t \geq 0}\set{\rho(t)} \subseteq \cY$. Note that $E_{AD}$ and $G_{AD}$ have extensions to $\cP(\R^d)$ in the sense of~\cref{def:wss}. The fact that $\tilde{E}_{AD}$ is proper and l.s.c. follows from standard results (cf.~\cite[Lemma 4.3.1]{JLJ98}). Thus,~\eqref{ass1'} is satisfied. Furthermore, $\tilde{G}_{AD}$ is of the form specified in~\eqref{eq:LSC} and thus~\eqref{ass3'} is satisfied. Now consider the set
\begin{align}
L_{\leq C}(\tilde{E}_{AD})=\set*{\mu \in \cP_2(\R^d) \cap \cY: \tilde{E}_{AD}(\mu) \leq C } \, .
\end{align}
We now follow the standard argument to obtain compactness in the radial setting as illustrated in~\cite[Theorem 2.1]{CCV15} or~\cite{McC94}. For any $\mu \in L_{\leq C}(\tilde{E}_{AD})$, we have that
\begin{align}
C \geq &\frac{1}{2}\iint_{\R^d \times \R^d} W(x-y) \dx{\mu}(x) \dx{\mu}(y) \\
\geq & \frac{1}{2}\int_{\R^d} \int_{\abs{x+x_0-y} > 1} W(x-y) \dx{\mu}(x) \dx{\mu}(y) + \frac{1}{2}\int_{\R^d} \int_{\abs{x+x_0-y} \leq 1} W(x-y) \dx{\mu}(x) \dx{\mu}(y) \\
\geq & \frac{1}{2}\int_{\R^d} \int_{\abs{x+x_0-y} > 1} W(x-y) \dx{\mu}(x) \dx{\mu}(y) -\frac{1}{2}\norm{W}_{\Leb^\infty(B_{1+ \abs{x_0}}(0))}
\end{align}
We now note that for any $x \in \R^d$ with $\abs{x} \geq 1$, $\set{y \in \R^d: x \cdot (y-x_0) \leq 0} \subseteq \set{y \in \R^d: \abs{x+x_0-y}>1}$. It follows that for every $R \geq 1$ it holds that
\begin{align}
C + \frac{1}{2}\norm{W}_{\Leb^\infty(B_{1+ \abs{x_0}}(0))} \geq \frac{1}{2}\int_{\abs{x} \geq R} \int_{x\cdot (y-x_0) \leq  0} W(x-y) \dx{\mu}(x) \dx{\mu}(y) \, .
\end{align}
Note that $x\cdot (y-x_0) \leq 0$ implies that $\abs{x+x_0-y} \geq\abs{x} $. Since $W$ is radial and monotone increasing, we obtain
\begin{align}
C + \frac{1}{2}\norm{W}_{\Leb^\infty(B_{1+ \abs{x_0}}(0))} \geq& \frac{1}{2}\int_{\abs{x} \geq R} \int_{x\cdot (y-x_0) \leq  0} W(x) \dx{\mu}(x) \dx{\mu}(y)  \\
\geq & \frac{W(R)}{2}\int_{\abs{x} \geq R} \int_{x\cdot (y-x_0) \leq  0}  \dx{\mu}(x) \dx{\mu}(y) \, .
\end{align}
We now use the fact that $\mu \in \cP_2^{R,x_0}(\R^d)$, that $\set{y \in \R^d: x \cdot (y-x_0) \leq 0}$ is a half space passing through $x_0$, and that $\mu$ is not singular since $\tilde{E}_{AD}(\mu)\leq C$ to assert that
\begin{align}
\int_{x\cdot (y-x_0) \leq  0} \dx{\mu}(y)= \frac{1}{2} \, ,
\end{align}
for all $x \in \R^d$. It follows that
\begin{align}
C + \frac{1}{2}\norm{W}_{\Leb^\infty(B_{1+ \abs{x_0}}(0))} \geq\frac{W(R)}{4}\int_{\abs{x} \geq R}   \dx{\mu}(x)   \, .
\end{align}
Since $W(R) \to \infty$ as $R \to \infty$, by Prokhorov's theorem, the set $L_{\leq C}(\tilde{E}_{AD})$ is compact in $\cP(\R^d)$. Thus,~\eqref{ass2'subset} is satisfied. Applying the result of~\cref{thm:converge'} we obtain that
\begin{align}
\lim_{t \to \infty} d_{LP}(\rho(t), \cE_w)=0 \, ,
\end{align}
where $\cE_w$ is the set of weak stationary states of the curve of maximal slope in the sense of~\cref{def:wss}. Note however that by~\cref{prop:exunminRd} it follows that 
\begin{align}
\cE_w \subset \cP_2(\R^d) \, ,
\end{align}
and that $\cE_w$ consists of translates of a single measure $\rho^*$. It follows that
\begin{align}
\lim_{t \to \infty} d_{\sigma}(\rho(t), \cE_w)=0 \, ,
\end{align} 
since $d_\sigma(\cdot,\cdot)$ is just the restriction of $d_{LP}(\cdot,\cdot)$ to $\cP_2(\R^d)$. Even though we cannot extract enough compactness to pass to the limit in the first moment of the curve of maximal slope, we still have enough rigidity to say something about the limit. Let us assume that $\rho(t)$ does not converge to a single measure in $\cE_w$, i.e. there exist subsequences $\rho(t_{n})$ and $\rho(t_{m})$, $t_m,t_n \to \infty$ and measures $\rho_1,\rho_2 \in \cE_w$ such that
\begin{align}
\lim_{n \to \infty}d_\sigma(\rho(t_n),\rho_1) =&0 \\
\lim_{m \to \infty}d_\sigma(\rho(t_m),\rho_2) =&0 \, .
\end{align}
Thus, for any open ball $B_r(x),x \in \R^d$ and $\pi \in O(d)$, we have that
\begin{align}
\int_{B_r(x)} \dx{\rho_1}= \lim_{n \to \infty} \int_{B_r(x)} \dx{\rho(t_n)}=  \lim_{n \to \infty} \int_{\pi(B_r(x-x_0))} \dx{\rho(t_n)}=  \int_{\pi(B_r(x-x_0))} \dx{\rho_1} \, ,
\end{align}
and
\begin{align}
\int_{B_r(x)} \dx{\rho_2}= \lim_{m \to \infty} \int_{B_r(x)} \dx{\rho(t_m)}=  \lim_{m \to \infty} \int_{\pi(B_r(x-x_0))} \dx{\rho(t_m)}=  \int_{\pi(B_r(x-x_0))} \dx{\rho_2} \, .
\end{align}
Here, we have used the fact that $\bigcup_{t \geq 0}\set{\rho(t)} \in \cP_2^{R,x_0}(\R^d)$, the Portmanteau lemma, and the fact $B_r(x)$ and its image under $\pi \in O(d)$ is a continuity set of $\rho_1,\rho_2 \in \cE_w$ (since they are not singular measures). Since open balls generate the Borel $\sigma$-algebra, it follows that both $\rho_1, \rho_2 \in \cP_2^{R,x_0}(\R^d)$. Thus, $\rho_1=\rho_2$. Indeed, they must be translates of each other and if $\rho_2=\tau_{x} \rho_1$ for some $x \neq 0$, then it follows that $\rho_2 \in \cP_2^{R,x_0+x}$ which would be a contradiction. Thus, we have that
\begin{align}
\lim_{t \to \infty}d_{\sigma}(\rho(t),\rho^*) =0 \, ,
\end{align}
where $\rho^* \in \cP_2^{R,x_0}(\R^d)$ and so
\begin{align}
\int_{\R^d}(x-x_0) \dx{\rho^*}=0 \, .
\end{align}
\end{proof}

\subsection{Degenerate diffusion equations on $\T^d$}
\label{sec:degdiff}
Our theory applies to parabolic equations of the form
\begin{align}
\label{eq:dddeq}
\partial_t \rho = \nabla\cdot \left(
\rho \nabla c^* \left(
\nabla F'(\rho)
\right)
\right), \quad \rho(0) = \rho_0 \in \cP_q(\T^d),
\end{align}
where $1 < q < \infty$, $c^*$ denotes the Legendre transform of $c(z) = \frac{1}{q}|z|^q$, the usual cost function in the $q$-Wasserstein metric $W_q$, and $F$ is any of
\[
F(x) = \frac{1}{p-1}x\log x, \quad F(x) = \frac{1}{m(m-1)}x^m, \quad m \geq 1.
\]
Here, $1 < p < \infty$ is the conjugate exponent of $q$. It is by now standard that these choices for $F$ guarantee that the energy functional
\[
E(\rho) = \int_{\mathbb{T}^d}F(\rho)\dx{x}
\]
satisfies the proper, lower bounded, and lower semicontinuity properties of~\eqref{ass1}. Restricting to $\mathbb{T}^d$ guarantees the compactness~\eqref{ass2}. Included in equation~\eqref{eq:dddeq} are the so-called generalized heat equation, parabolic $p$-Laplacian equation~\cite{KV08}, doubly degenerate diffusion equation (see~\cite{SV94} for an $L^p$ theory), and the porous medium equation.
\begin{proposition}
Gradient flow solutions of~\eqref{eq:dddeq} exist provided the initial condition is in the domain of $E, \rho_0 \in D(E)$ and coincide with $q$-curves of maximal slope for the functional $E$ with respect to the weak upper gradient
\[
G(\rho) = \left(\int_{\mathbb{T}^d} |\nabla F'(\rho)|^p \rho\dx{x}\right)^\frac{1}{p},
\]
which is l.s.c. In particular, $G$ satisfies~\eqref{ass3}.
\end{proposition}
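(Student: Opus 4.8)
The plan is to treat the three assertions—existence of gradient-flow solutions of~\eqref{eq:dddeq}, their coincidence with $q$-curves of maximal slope, and the lower semicontinuity of $G$—in turn, invoking the abstract $q$-Wasserstein theory for the first two and reducing the third to the criterion of Appendix~\ref{sec:lsc}. For existence, since we are granted that $E$ is proper, bounded below, and l.s.c. with $\sigma$-compact sublevel sets (i.e.~\eqref{ass1}--\eqref{ass2}), I would run the generalized minimizing-movement scheme in $(\cP_q(\T^d),W_q)$, that is, the adaptation of~\cite[Theorems 2.3.1 and 2.3.3]{AGS08} to the cost $c(z)=\tfrac1q\abs{z}^q$ carried out in~\cite{Agu03}. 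This produces an absolutely continuous curve $\rho\in AC([0,\infty);\cP_q(\T^d))$ for every $\rho_0\in D(E)$.

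To identify this curve with~\eqref{eq:dddeq} and with a $q$-curve of maximal slope for $G$, I would compute the first variation of the energy along the scheme. A curve of maximal slope satisfies the continuity equation $\partial_t\rho+\nabla\cdot(\rho v)=0$ with optimal velocity $v=-\nabla c^*(\nabla F'(\rho))=-\abs{\nabla F'(\rho)}^{p-2}\nabla F'(\rho)$, which is exactly~\eqref{eq:dddeq}. The two key identities are then
\[
\frac{\dx{}}{\dx{t}}E(\rho(t))=\int_{\T^d}F'(\rho)\,\partial_t\rho\dx{x}=-\int_{\T^d}\rho\abs{\nabla F'(\rho)}^p\dx{x}=-G^p(\rho),
\]
obtained after an integration by parts, and, since $(p-1)q=p$,
\[
\abs{\rho'}^q(t)=\int_{\T^d}\rho\abs{v}^q\dx{x}=\int_{\T^d}\rho\abs{\nabla F'(\rho)}^{(p-1)q}\dx{x}=G^p(\rho),
\]
the latter being the Benamou--Brenier representation of the $W_q$-metric derivative. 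Combining these with Young's inequality yields the chain-rule bound $\abs{(E\circ\rho)'}\le G(\rho)\abs{\rho'}$, so that $G$ is a weak upper gradient in the sense of~\cref{def:wug}; moreover $\tfrac{\dx{}}{\dx{t}}E(\rho)=-\tfrac1q\abs{\rho'}^q-\tfrac1p G^p$ because $\tfrac1p+\tfrac1q=1$, which is the maximal-slope equality~\eqref{eq:maxslope} (with $p$ and $q$ interchanged relative to~\cref{defn:maxslope}, as is implicit throughout this subsection).

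For the lower semicontinuity of $G$, I would recast it in divergence form. Writing $P(\rho):=\rho F'(\rho)-F(\rho)$ for the pressure, one has $\rho\nabla F'(\rho)=\nabla P(\rho)$, whence
\[
G^p(\rho)=\int_{\T^d}\frac{\abs{\nabla P(\rho)}^p}{\rho^{p-1}}\dx{x}.
\]
For the two choices of $F$ this gives $P(\rho)=\tfrac{1}{p-1}\rho$ (the generalized Fisher information) and $P(\rho)=\tfrac1m\rho^m$ (the porous-medium pressure), respectively. Since the integrand $(\rho,w)\mapsto\abs{w}^p/\rho^{p-1}$ is jointly convex and positively $1$-homogeneous for $1<p<\infty$, $G$ falls within the template~\eqref{eq:LSC}, and~\cref{thm:LSC} then gives lower semicontinuity of $G$ with respect to narrow convergence, hence in the $\sigma$-topology; this is precisely~\eqref{ass3}.

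The main obstacle I anticipate is the rigorous identification in the genuinely degenerate regime $q\neq2$ combined with the doubly degenerate choice $F(x)=\tfrac{1}{m(m-1)}x^m$. There the Hilbertian $W_2$ tools (contraction, EVI) are unavailable and one has no a priori regularity of $\rho$, so justifying that the limiting flux $\rho\,\nabla c^*(\nabla F'(\rho))$ is a well-defined integrable field and that the formal first-variation computation survives passage to the minimizing-movement limit is delicate. In particular, proving that $G$ is not merely an upper gradient but coincides with the relaxed metric slope $\abs{\partial E}$—which is what legitimises the energy-dissipation equality above—is the technical heart, and it is at this step that the lower semicontinuity of $G$ established in the previous paragraph enters as an input.
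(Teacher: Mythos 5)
Your formal computations are all correct (the dissipation identity $\tfrac{\dx{}}{\dx{t}}E(\rho)=-G^p(\rho)$, the Benamou--Brenier identification $\abs{\rho'}^q=G^p$ using $(p-1)q=p$, the swap of $p$ and $q$ relative to~\cref{defn:maxslope}, and the pressure rewriting $\rho\nabla F'(\rho)=\nabla P(\rho)$), and your overall route is the same as the paper's in outline: existence via the abstract minimizing-movement theory, identification of $G$ with the metric slope, and lower semicontinuity via the template~\eqref{eq:LSC} and~\cref{thm:LSC} (which is exactly the paper's stated alternative for the l.s.c.\ step). The difference is one of emphasis: the paper's proof is almost entirely citation-based, invoking~\cite[Theorems 11.1.3 and 11.3.2]{AGS08} for existence and equivalence with curves of maximal slope for $\abs{\partial E}$, and~\cite[Theorem 10.4.6]{AGS08} for the identity $G=\abs{\partial E}$ on absolutely continuous measures, whereas you unpack the underlying computations.

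The one place where your proposal stops short is precisely the step you flag as ``the technical heart'': proving that $G$ coincides with the relaxed slope $\abs{\partial E}$ in the degenerate regime $q\neq 2$. You should know that this is not left open in the literature --- it is closed not by Hilbertian tools but by the geodesic ($\lambda$-)convexity of the internal energy $E$ (the paper cites~\cite[Proposition 9.3.9]{AGS08} for this; both admissible choices of $F$ satisfy McCann's condition), which makes $\abs{\partial E}$ a strong upper gradient, lower semicontinuous by~\cite[Corollary 2.4.10]{AGS08}, and computable as the stated integral by~\cite[Theorem 10.4.6]{AGS08}. Relatedly, verifying the inequality $\abs{(E\circ\rho)'}\le G(\rho)\abs{\rho'}$ only along the constructed solution curve does not establish that $G$ is a weak upper gradient in the sense of~\cref{def:wug}, which quantifies over \emph{all} absolutely continuous curves; the identification $G=\abs{\partial E}$ together with the convexity is what supplies this for free. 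With those two citations inserted where you currently anticipate an obstacle, your argument matches the paper's.
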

\begin{proof}
 We use \cite[Theorems 11.1.3 and 11.3.2]{AGS08} to obtain the equivalence and existence of gradient flow solutions to~\eqref{eq:dddeq} and $q$-curves of maximal slope with respect to the upper gradient $|\partial E|$. It remains to check that $G$ is an upper gradient of $E$. However, this is true using~\cite[Theorem 10.4.6]{AGS08} which shows $G = |\partial E|$ since we are working with absolutely continuous measures.

The convexity of the functionals $E$ is given by~\cite[Proposition 9.3.9]{AGS08} and finally $|\partial E|$ is l.s.c. by~\cite[Corollay 2.4.10]{AGS08}. Alternatively, $G$ is of the form~\eqref{eq:LSC} and hence is l.s.c by~\cref{thm:LSC}.
\end{proof}
\begin{proposition}
Let $\rho \in AC([0,\infty);\cP_q(\T^d))$ be a $q$-curve of maximal slope of the energy $E$ with respect to the weak upper gradient $G$, for some initial datum $\rho_0 \in \cP_q(\T^d)$ with $E(\rho_0)< \infty$. Then,
\begin{align}
\lim_{t \to \infty} W_\infty(\rho(t),\rho_\infty)=0 \, ,
\end{align}
where $\rho_\infty:=\mathcal{L}_{\T^d}$ is the normalized Lebesgue measure on $\T^d$.
\end{proposition}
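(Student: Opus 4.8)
The plan is to reduce the statement to the abstract convergence result~\cref{thm:converge} and then upgrade the topology, exactly along the lines of the Kuramoto model in~\cref{sec:kuramoto}. The two preceding propositions already verify~\eqref{ass1} and~\eqref{ass3}, and~\eqref{ass2} holds because $\cP_q(\T^d)=\cP(\T^d)$ is compact (the torus has bounded diameter). Thus~\cref{thm:converge} applies and yields $\lim_{t\to\infty}d_\sigma(\rho(t),\cE)=0$, where $\cE$ is the set of stationary states of the $q$-curve of maximal slope. The real content of the proof is therefore to show that $\cE=\set{\rho_\infty}$.

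To identify $\cE$, I would use~\cref{lemma:equil}: a measure $\mu$ lies in $\cE$ if and only if $E(\mu)<\infty$ and $G(\mu)=0$, i.e. $\mu$ is a finite-energy zero of the metric slope $G=\abs{\pa E}$. Since $E$ is geodesically convex (this is precisely the convexity invoked in the existence proposition for~\eqref{eq:dddeq}), a finite-energy point at which the metric slope vanishes is necessarily a global minimiser of $E$. It then remains to minimise $E(\mu)=\int_{\T^d}F(\mu)\dx{x}$ over $\cP(\T^d)$. As $F$ is strictly convex and Lebesgue measure on the unit torus is a probability measure, Jensen's inequality gives
\[
\int_{\T^d} F(\mu)\dx{x} \ge F\bra*{\int_{\T^d}\mu\dx{x}} = F(1),
\]
with equality if and only if $\mu\equiv 1$ a.e., so that the unique minimiser is $\rho_\infty=\mathcal{L}_{\T^d}$. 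Consequently $\cE=\set{\rho_\infty}$, and~\cref{thm:converge} yields $d_\sigma(\rho(t),\rho_\infty)\to 0$.

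It remains to upgrade narrow convergence to convergence in $W_\infty$. First I would promote $d_\sigma$-convergence to $W_r$-convergence for every $1\le r<\infty$: since $\cP_r(\T^d)$ is compact, the trajectory $\bigcup_{t\ge 0}\set{\rho(t)}$ is relatively compact in $(\cP_r(\T^d),W_r)$, so~\cref{lem:curveconv}, applied with $d_1=d_\sigma$, $d_2=W_r$ and $Y^*=\set{\rho_\infty}$, gives $\lim_{t\to\infty}W_r(\rho(t),\rho_\infty)=0$. Finally, because the limit $\rho_\infty$ is bounded below away from zero, I would invoke~\cite[Theorem 1.2]{BJR07} to pass from $W_r$-convergence (for a suitable finite $r$) to the asserted $W_\infty$-convergence.

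The main obstacle is the precise identification $\cE=\set{\rho_\infty}$: the delicate point is converting the vanishing of the metric slope into a genuine global minimisation property, which is where the geodesic convexity of $E$ is essential, and one must ensure that the convexity used is along the $W_q$-geodesics relevant to~\eqref{eq:dddeq}. An alternative, more hands-on route leading to the same conclusion is to unwind $G(\mu)=0$ directly: it forces $\nabla F'(\mu)=0$ $\mu$-a.e., so $F'(\mu)$ is constant on each connected component of $\set{\mu>0}$, and ruling out proper supports on the connected torus via an $\SobH^1$-type jump obstruction (as in~\cref{sec:fplog}) forces $\mu$ to be constant, hence equal to $\rho_\infty$. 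By contrast, the $W_\infty$ upgrade is comparatively routine given the Kuramoto template.
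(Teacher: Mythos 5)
Your proposal is correct and follows essentially the same route as the paper: verify (A1)--(A3) using the compactness of $\cP(\T^d)$, apply the abstract convergence theorem, identify the unique stationary state as the normalised Lebesgue measure via the vanishing of $G$ together with the convexity of $E$, and then upgrade first to $W_q$ (using the boundedness of the torus) and finally to $W_\infty$ via the cited result of Bouchitt\'e--Jimenez--Rajesh. The paper's own proof is terser on the identification of $\cE=\set{\rho_\infty}$, and your Jensen's-inequality elaboration is a faithful filling-in of what the authors leave implicit.
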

\begin{proof}
Since we are considering the bounded domain, $\mathbb{T}^d$, the only stationary state is $\rho_\infty$. This can be seen by finding the unique zero of $G$ together with the convexity of the energy. By the previous discussion and proposition, we have that~\eqref{ass1}, \eqref{ass2}, and \eqref{ass3} are satisfied. An application of~\cref{thm:converge} gives
\[
\lim_{t\to \infty} d_\sigma(\rho(t),\rho_\infty) = 0.
\]
Owing to the boundedness of $\T^d$, we can improve this convergence to
\[
\lim_{t\to \infty}W_q(\rho(t), \rho_\infty) = 0.
\]
Furthermore, by~\cite[Theorem 1.2]{BJR07}, since $\rho_\infty$ has a density which is uniformly lower bounded away from zero, we have
\[
\lim_{t\to \infty} W_\infty (\rho(t),\rho_\infty) = 0.
\]
\end{proof}
For more details in a general setting, we refer to~\cite{Agu05,Otto96}. For the asymptotic behaviour in the full space $\mathbb{R}^d$, we lose the compactness; there is no mechanism to guarantee sublevel sets of $E$ are compact in $\mathcal{P}$ in our theory. However, by introducing a change of variables as in~\cite{Agu03}, equation~\eqref{eq:dddeq} can be re-written with a confining potential term, unfortunately breaking the gradient flow structure. There, convergence to an equilibrium solution can be established with exponential rate. We defer an explicit instance of the change of variables to the next example.
\subsection{Porous Medium Equation on $\R^d$}
\label{sec:pme}
In this subsection, we study how our general asymptotic theory can be applied to the porous medium equation on $\R^d$
\[
\partial_t \rho = \Delta \rho^m, \quad m > 1. 
\]
It is known in the vast literature surrounding this equation and its variants (cf.~\cite{CT00,DD02,V03,Vaz07}) that there is a self-similar structure to this equation and in the large time limit tends to a Barenblatt profile. Mathematically, we shall see that seeking a self-similar solution will introduce an equation that has nicer gradient flow properties for which we can apply our theory.Following Otto~\cite{Ott01}, let us introduce the following substitution
\[
\rho(t,x) = \frac{1}{t^{d\alpha}} \hat{\rho}\left(\log t, \frac{x}{t^\alpha}\right), \quad \alpha = \frac{1}{d(m-1)+2}.
\]
Labelling the change of variables as
\[
\tau = \log t, \quad y = \frac{x}{t^\alpha},
\]
we obtain the following evolution equation for $\hat{\rho}$
\[
\partial_\tau\hat{\rho} = \Delta_y \hat{\rho}^m + \alpha\nabla_y\cdot(y\hat{\rho}).
\]
This is a gradient flow with energy functional
\[
E(\hat{\rho}) = F_m(\hat{\rho}) + \frac{\alpha}{2}\int_{\R^d} |y|^2 \dx{\hat{\rho}},
\]
where the internal energy $F_m$ is given by
\[
F_m(\hat{\rho}) = \frac{1}{m-1}\int_{\mathbb{R}^d} \hat{\rho}^m \dx{x}.
\]
Arguing similarly to the previous section,  this functional and its associated dissipation
\[
G^2(\hat{\rho}) = \int_{\R^d} \hat{\rho}\left|
 \frac{m}{m-1}\nabla \hat{\rho}^{m-1} + \alpha y 
\right|^2\dx{y},
\]
fulfil~\eqref{ass1}, \eqref{ass2}, and \eqref{ass3}. Here, we note that the compactness on $\mathbb{R}^d$ is owed to the confining potential for the second moment of $\hat{\rho}$ in $E$. Hence, after a self-similar scaling, the porous medium equation fits into our abstract framework. Again, the lower semicontinuity of $G$ is given by~\cref{thm:LSC}. Before stating the convergence result, we repeat the well known fact
\begin{lemma}[\cite{Ott01}]
\label{lem:barenblatt}
The unique stationary state in the evolution of $\hat{\rho}$ is
\[
\hat{\rho}_\infty(y) = \left(
C - \frac{\alpha(m-1)}{2m}|y|^2
\right)_+^\frac{1}{m-1},
\]
where $C>0$ is a constant such that $\hat{\rho}_\infty$ has unit mass.
\end{lemma}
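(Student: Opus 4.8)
The plan is to read off the stationary states directly from \cref{lemma:equil}, which tells us that a measure is a stationary state of the rescaled evolution if and only if it has finite energy and is a zero of $G$. Since $G$ is a genuine nonnegative integral, the condition $G(\hat{\rho}_\infty)=0$ is equivalent to the pointwise vanishing of the integrand $\hat{\rho}_\infty$-almost everywhere, namely $\frac{m}{m-1}\nabla\hat{\rho}_\infty^{m-1} + \alpha y = 0$ on $\{\hat{\rho}_\infty > 0\}$. This relation is essentially the whole content of the lemma; the remaining work is to unwind it and to argue uniqueness.

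First I would recognise both terms as gradients. Writing $F_m'(s) = \frac{m}{m-1}s^{m-1}$ and $\alpha y = \nabla\bra*{\tfrac{\alpha}{2}\abs{y}^2}$, the relation becomes $\nabla\bra*{\frac{m}{m-1}\hat{\rho}_\infty^{m-1} + \frac{\alpha}{2}\abs{y}^2} = 0$ on the support. Integrating over a connected component of $\{\hat{\rho}_\infty > 0\}$ gives $\frac{m}{m-1}\hat{\rho}_\infty^{m-1} + \frac{\alpha}{2}\abs{y}^2 = C'$ for a constant $C'$, and solving for $\hat{\rho}_\infty$ while keeping only the nonnegative branch yields exactly $\hat{\rho}_\infty = \bra*{C - \frac{\alpha(m-1)}{2m}\abs{y}^2}_+^{1/(m-1)}$ with $C = \frac{m-1}{m}C'$. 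The constant $C$ is then pinned down uniquely by the unit-mass constraint $\int_{\R^d}\hat{\rho}_\infty \dx{y} = 1$, which fixes the radius of the supporting ball.

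The hard part is uniqueness, i.e.\ ruling out stationary states whose support is a disjoint union of components (an annulus, or balls carrying different constants), since the weighting by $\hat{\rho}_\infty$ in $G^2$ gives no information off the support. I see two routes. The direct route exploits the regularity $\hat{\rho}_\infty^{m-1} \in \SobH^1$ coming from $G(\hat{\rho}_\infty) < \infty$: continuity of $\hat{\rho}_\infty^{m-1}$ forces $\hat{\rho}_\infty$ to vanish on the boundary of each component, which is incompatible with $\hat{\rho}_\infty^{m-1} = C - \frac{\alpha(m-1)}{2m}\abs{y}^2$ unless the support is a single centred ball $\{\abs{y} \le R\}$, since just outside an inner radius this expression would turn negative. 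The cleaner route, which I would favour, bypasses the support analysis entirely: the functional $E$ is $\alpha$-uniformly geodesically convex in $W_2$, being the sum of the internal energy $F_m$, which is geodesically convex for $m \ge 1$ by McCann's criterion~\cite{McC94}, and the confinement $\tfrac{\alpha}{2}\int\abs{y}^2\dx{\hat{\rho}}$, which is $\alpha$-convex. A uniformly convex functional has a unique minimiser, and its metric slope $\abs{\partial E} = G$ vanishes at a point precisely when that point is the minimiser; by \cref{lemma:equil} the set of stationary states is therefore the singleton consisting of this minimiser.

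Finally I would close the loop by checking that the explicit Barenblatt profile produced above is an admissible competitor: it is bounded, compactly supported, and has finite second moment and finite $F_m$, hence $E(\hat{\rho}_\infty) < \infty$, while $G(\hat{\rho}_\infty) = 0$ by construction. Being the unique finite-energy zero of $G$, it is the unique stationary state and, a posteriori, the unique minimiser of $E$, which identifies the limit appearing in the subsequent convergence statement.
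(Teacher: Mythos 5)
Your identification of the profile coincides with the paper's own (sketch) proof: both read off the stationarity condition $\frac{m}{m-1}\nabla\hat{\rho}_\infty^{m-1}+\alpha y=0$ on the support from the vanishing of $G$, integrate to obtain $\frac{m}{m-1}\hat{\rho}_\infty^{m-1}+\frac{\alpha}{2}\abs{y}^2=C'$, and take the positive part, with the unit-mass constraint fixing $C$. Where you genuinely differ is on uniqueness: the paper does not prove it, explicitly deferring to~\cite{Ott01}, whereas you supply an argument. Your preferred route --- $E$ is the sum of $F_m$, which is displacement convex for $m>1$ by McCann's criterion, and the $\alpha$-convex confinement, hence $\alpha$-uniformly geodesically convex, so it admits a unique minimiser and its slope vanishes only there --- is sound and self-contained, and arguably strengthens the paper at this point; note that $E(\hat{\rho})<\infty$ forces $\hat{\rho}\in\Leb^m(\R^d)$, so every finite-energy measure is absolutely continuous and the identification $G=\abs{\partial E}$ (via~\cite[Theorem 10.4.6]{AGS08}, as the paper uses in the preceding subsection) applies, after which \cref{lemma:equil} turns the unique zero of $G$ into the unique stationary state. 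Your first, ``direct'' route is shakier: finiteness of $G(\hat{\rho}_\infty)$ only yields a $\hat{\rho}_\infty$-weighted integrability of $\nabla\hat{\rho}_\infty^{m-1}$, not membership in $\SobH^1(\R^d)$, and even $\SobH^1$ regularity would not give continuity when $d\geq 2$; but since you discard that route in favour of the convexity argument, this does not compromise the proof.
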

\begin{proof}[Sketch proof]
We only show here the formal computation that determines $\hat{\rho}_\infty$. For the uniqueness of the stationary state, we refer to~\cite{Ott01} for more details. The zeroes of $G^2(\hat{\rho})$ are precisely given by
\[
0 = \alpha y + \frac{m}{m-1}\nabla  \hat{\rho}^{m-1},
\]
on the support of $\hat{\rho}$. Integrating out $y$ gives, for some constant $C_1$,
\[
C_1   = \frac{\alpha}{2}|y|^2 + \frac{m}{m-1}\hat{\rho}^{m-1}.
\]
The formula is proven noting the positive part has to be taken since these computations are on the support of $\hat{\rho}_\infty$.
\end{proof}
\begin{proposition}
Let $\hat{\rho} \in AC([0,\infty);\cP_2(\R^d))$ be a $2$-curve of maximal slope of the energy $E$ with respect to the weak upper gradient $G$, for some initial datum $\rho_0 \in Z_{E,2}$. Then, for every $0 < \eps < 1$
\begin{align}
\lim_{\tau \to \infty} W_{2-\eps}(\hat{\rho}(\tau),\hat{\rho}_\infty)=0.
\end{align}
\end{proposition}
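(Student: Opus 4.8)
The plan is to mirror the strategy used for the McKean--Vlasov equation in \cref{sec:MV}: first obtain convergence in the weak metric $d_\sigma$ via \cref{thm:converge}, and then upgrade it to $W_{2-\eps}$ using the additional moment control coming from the confining term in $E$. As recorded in the discussion preceding \cref{lem:barenblatt}, the energy $E$ and its dissipation $G$ satisfy \eqref{ass1}, \eqref{ass2}, and \eqref{ass3}, so \cref{thm:converge} applies directly and yields
\[
\lim_{\tau\to\infty} d_\sigma(\hat\rho(\tau),\cE) = 0,
\]
where $\cE$ is the set of stationary states of the curve of maximal slope. By \cref{lemma:equil} these are precisely the finite-energy zeroes of $G$, and \cref{lem:barenblatt} identifies this set as the singleton $\set{\hat\rho_\infty}$. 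Hence $\lim_{\tau\to\infty} d_\sigma(\hat\rho(\tau),\hat\rho_\infty)=0$.

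The remaining task is to strengthen $d_\sigma$-convergence to $W_{2-\eps}$-convergence, for which I would invoke \cref{lem:curveconv} with $d_1=d_\sigma$ and $d_2=W_{2-\eps}$ on $\cP_2(\R^d)$; since $W_{2-\eps}$-convergence implies narrow convergence, the topology it generates is finer than the $\sigma$-topology, so the hypotheses on the metrics hold. To verify the compactness hypothesis of \cref{lem:curveconv}, I first note that $\tau\mapsto E(\hat\rho(\tau))$ is non-increasing by \eqref{eq:maxslope}, whence $E(\hat\rho(\tau))\le E(\rho_0)<\infty$ for all $\tau\ge 0$. Because $m>1$ makes $F_m(\hat\rho)=\frac{1}{m-1}\int_{\R^d}\hat\rho^m\dx{x}\ge 0$, the confining term is controlled,
\[
\frac{\alpha}{2}\int_{\R^d}\abs{y}^2\dx{\hat\rho(\tau)} \le E(\hat\rho(\tau)) \le E(\rho_0),
\]
so the whole trajectory $\bigcup_{\tau\ge 0}\set{\hat\rho(\tau)}$ lies in the set of measures whose second moment is bounded by $M:=2E(\rho_0)/\alpha$.

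The heart of the argument is then the claim that this uniform second-moment bound forces relative compactness in $W_{2-\eps}$ whenever $0<\eps<1$. This rests on the standard fact that $W_p$-convergence is equivalent to narrow convergence together with convergence of the $p$-th moments: the bound on second moments gives tightness, hence narrow relative compactness, and, because $2-\eps<2$, it gives uniform integrability of the $(2-\eps)$-th moments, upgrading any narrowly convergent subsequence to a $W_{2-\eps}$-convergent one (cf.~\cite[Proposition 7.1.5]{AGS08}). Lower semicontinuity of the second moment then keeps the limits in $\cP_2(\R^d)$, so the closure of the trajectory is compact in $\cP_{2-\eps}(\R^d)$ with the $W_{2-\eps}$ metric. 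Applying \cref{lem:curveconv} with $Y^*=\set{\hat\rho_\infty}$ delivers
\[
\lim_{\tau\to\infty} W_{2-\eps}(\hat\rho(\tau),\hat\rho_\infty)=0,
\]
as desired.

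The only genuine obstacle is this moment-compactness step; everything else is a direct transcription of the abstract machinery. The restriction $\eps<1$ plays two roles: it keeps $2-\eps\in(1,2)$ so that $W_{2-\eps}$ is a bona fide Wasserstein metric within the range $1<p<\infty$ for which our theory is stated, and it leaves a strict gap below the controlled second moment, which is exactly what uniform integrability of the $(2-\eps)$-th moment requires. I do not expect to be able to reach $\eps=0$, since the second moment is only bounded, not shown to converge, along the flow.
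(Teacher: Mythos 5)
Your proposal is correct and follows essentially the same route as the paper: apply \cref{thm:converge} under \eqref{ass1}--\eqref{ass3}, identify $\cE=\set{\hat\rho_\infty}$ via \cref{lem:barenblatt}, and upgrade $d_\sigma$-convergence to $W_{2-\eps}$-convergence using the second-moment control built into the confining term of $E$ together with \cref{lem:curveconv}. The only difference is that you spell out the uniform-integrability step (bounded second moments plus narrow convergence give $W_{2-\eps}$-compactness for $2-\eps<2$) that the paper leaves implicit.
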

\begin{proof}
In the introductory discussion,~\eqref{ass1}, \eqref{ass2}, and \eqref{ass3} are all satisfied so we can apply~\cref{thm:converge}. Furthermore, the previous ~\cref{lem:barenblatt} shows that the set of stationary states is the singleton $\{\hat{\rho}_\infty\}$ so that $d_\sigma (\hat{\rho}(\tau), \hat{\rho}_\infty) \to 0$ as $\tau \to \infty$. We can upgrade the convergence to $W_{2-\eps}$ since the compactness of sublevel sets of the energy $\{E \le C\}$ holds up to $W_{2-\eps}$; second moment control is built into $E$.
\end{proof}
\subsection{Nonlocal interaction equations: a model for consensus convergence}
\label{sec:gpy}
In this example, we study pure interaction in one dimension which is still quite complicated. One important example arises in the family of Hegselmann-Krause models for opinion dynamics~\cite{HK02}. It can be written as the following aggregation equation on $\mathbb{R}$
\begin{align}
\label{eq:GPY}
\begin{cases}
\partial_t \rho = \partial_x (\rho \partial_x(\psi * \rho))  \\
\rho(0)= \rho_0 \in \cP_2(\R) \, 
\end{cases}
\, ,
\end{align}
where the interaction kernel $\psi :\mathbb{R} \to \mathbb{R}$ is given as a primitive
\[
\psi(x) = \int_{-\infty}^x y\phi(y)\dx{y},
\]
for a compactly supported even function $\phi : \R \to \mathbb{R}_{\geq 0}$. We refer to~\cite{LT04,BV06,BL09,BCL09,FR10,FR11,CDFLS11,BCDFP15} for more details on aggregation equations and finite time blowup of $L^1$-solutions. For simplicity, we will assume $\phi$ is qualitatively similar but more regular than the examples considered in~\cite{GPY17}. We assume that $\phi \in C_c^\infty(\R)$ so that the resultant $\psi$ has the following properties;
\begin{enumerate}
	\item $\psi \in C_c^\infty(\R)$, 
	\item $\supp (\psi) \subset \supp (\phi)$,
	\item and $\psi$ is non-positive and even.
\end{enumerate} 
In~\cite{GPY17}, the examples of $\phi$ considered are all $L^\infty$ which only guarantees almost everywhere Lipschitz continuity of $\psi$. As mentioned previously, we require the further regularity assumptions for simplicity.
The associated energy functional and candidate weak upper gradient are
\[
E_{GPY}(\rho) := \frac{1}{2}\int_\R [\psi * \rho] (x) \dx{\rho}(x), \qquad G_{GPY}^2(\rho) := \frac{1}{2} \int_\R \left|
\partial_x (\psi * \rho)
\right|^2 \dx{\rho}(x).
\]
Since $\psi$ is $C_c^\infty$, one can check that the regularity and $\lambda$-convexity assumptions of~\cite[Assumptions NL0-3]{CDFLS11} all hold and there exists a unique 2-curve of maximal slope for~\eqref{eq:GPY} with $G_{GPY}$ being the upper gradient of $E_{GPY}$. Moreover, weak solutions of~\eqref{eq:GPY} are equivalent to the associated curves of maximal slope.

We explain below that all the assumptions of our abstract theory are satisfied. However, in the sequel we verify some Dobrushin type estimates to obtain rates of convergence. We start with the following soft convergence result:
\begin{proposition}
\label{prop:GPYabconv}
Let $\rho \in AC([0,\infty);\cP_2(\R))$ be a $2$-curve of maximal slope of the energy $E_{GPY}$ with respect to the weak upper gradient $G_{GPY}$, for some initial datum $\rho_0 \in Z_{E_{GPY},2}$. Then,
\begin{align}
\lim_{t \to \infty} d_\sigma\left(\rho(t),\cE_{GPY}\right)=0 \, ,
\end{align}
where $\cE_{GPY} \subseteq \cP_2(\R)$ is the set of stationary states of the $2$-curve of maximal slope associated to $E_{GPY}$ and $G_{GPY}$.
\end{proposition}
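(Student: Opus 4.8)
The plan is to verify the three hypotheses of \cref{thm:converge} and then invoke it directly. The functional $E_{GPY}$ is proper, and since $\psi \in C_c^\infty(\R)$ is bounded we have $\abs{E_{GPY}(\rho)} \le \tfrac12 \norm{\psi}_{\Leb^\infty(\R)}$ for every $\rho \in \cP_2(\R)$; in particular $E_{GPY}$ is bounded below. Because $(x,y) \mapsto \psi(x-y)$ is bounded and continuous, the map $\rho \mapsto \iint \psi(x-y)\dx{\rho}(x)\dx{\rho}(y)$ is continuous along narrowly convergent sequences (the product measures converge narrowly to the product of the limits), so $E_{GPY}$ is in fact $\sigma$-continuous and hence l.s.c. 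This gives~\eqref{ass1}. For~\eqref{ass3}, the dissipation $G_{GPY}^2$ is of the form~\eqref{eq:LSC}, so its $\sigma$-lower semicontinuity follows from~\cref{thm:LSC}.

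The genuine difficulty is compactness. Since $E_{GPY}$ is bounded above, its sublevel sets eventually exhaust all of $\cP_2(\R)$, which is not $\sigma$-relatively compact; thus~\eqref{ass2} fails and I would instead verify~\eqref{ass2subset} on a suitable invariant set $\cY$ containing the trajectory. The key observations exploit the structure $\psi'(x) = x\phi(x)$ with $\phi \ge 0$ even. Testing the weak formulation of~\eqref{eq:GPY} against $x$ and using that $\psi'$ is odd shows that the centre of mass $m_0 := \int_\R x\dx{\rho_0}$ is conserved in time. Testing against $(x-m_0)^2$ and symmetrising in $(x,y)$ produces
\[
\frac{\dx{}}{\dx{t}} \int_\R (x-m_0)^2 \dx{\rho(t)} = -\iint_{\R\times\R} (x-y)^2 \phi(x-y) \dx{\rho(t)}(x)\dx{\rho(t)}(y) \le 0 \, ,
\]
so the variance is non-increasing. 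Hence, setting $V_0 := \int_\R (x-m_0)^2\dx{\rho_0} < \infty$, the trajectory remains inside
\[
\cY := \set*{\mu \in \cP_2(\R) : \int_\R x \dx{\mu} = m_0, \ \int_\R (x-m_0)^2 \dx{\mu} \le V_0} \, .
\]

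It then remains to check that $\cY$, and hence any subset such as $L_{\le C}(E_{GPY}) \cap \cY$, is $\sigma$-relatively compact. The uniform second-moment bound yields, via Chebyshev's inequality, the uniform tail control $\mu(\set*{\abs{x-m_0}>R}) \le V_0/R^2$ for all $\mu \in \cY$, so $\cY$ is tight and relatively compact in the narrow topology by Prokhorov's theorem; the same bound gives uniform integrability of $x$, so the two constraints defining $\cY$ pass to narrow limits and $\cY$ is $\sigma$-closed. This establishes~\eqref{ass2subset}, and \cref{thm:converge} then yields $\lim_{t\to\infty} d_\sigma(\rho(t), \cE_{GPY}) = 0$.

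I expect the main obstacle to be precisely this compactness step. The functional provides no confinement on its own, so the argument must be pivoted onto the conserved centre of mass and the dissipated variance rather than onto sublevel sets of $E_{GPY}$. The delicate point is justifying the centre-of-mass and variance identities rigorously at the level of curves of maximal slope rather than of smooth solutions; this relies on the equivalence, recorded before the statement, between curves of maximal slope for $(E_{GPY},G_{GPY})$ and weak solutions of~\eqref{eq:GPY}, which licenses testing the weak formulation against the (unbounded) weights $x$ and $(x-m_0)^2$ after a standard truncation argument.
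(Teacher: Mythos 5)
Your proposal is correct and follows essentially the same route as the paper: \eqref{ass1} and \eqref{ass3} are checked directly ($E_{GPY}$ is bounded and continuous, $G_{GPY}$ is of the form \eqref{eq:LSC}), and compactness is obtained through \eqref{ass2subset} via the same symmetrised moment computation that the paper records as \cref{lem:GPY2ndmomest} (monotonicity of the second moment) and \cref{prop:GPYrelcpct} (tightness by Chebyshev, limits in $\cP_2(\R)$ by Fatou). The only cosmetic difference is that you package the conserved centre of mass and non-increasing variance into an explicit invariant set $\cY$, whereas the paper applies the second-moment bound directly to the trajectory itself; both rely on the same truncation argument to justify testing the weak formulation against quadratic weights.
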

\begin{proof}
The listed properties of $\psi$ guarantee that $E_{GPY}$ satisfies~\eqref{ass1}, (indeed it is continuous on $\cP_2(\R)$). Furthermore, $G_{GPY}$ has an extension to $\cP(\R)$ which is exactly of the form~\eqref{eq:LSC}. Thus, by~\cref{thm:LSC},~\eqref{ass3} also holds. We verify~\eqref{ass2subset} in the sequel with~\cref{prop:GPYrelcpct} and then apply~\cref{thm:converge}.
\end{proof} 
It is natural to ask if there is a characterisation for the stationary states for~\eqref{eq:GPY}. The situation is complicated even in one dimension. A priori, we can generate an entire family of linear combination of Diracs belonging to the set of stationary states. Suppose the support of $\psi$ is contained in $[-R,R]$, $\{x_i\}_{i\in I} \subset \R$ is a discrete set of points (finite or infinite), and $\{m_i\}_{i\in I} \subset \R_{>0}$ is indexed the same such that
\[
\sum_{i\in I}m_i = 1, \quad \sum_{i\in I}m_i x_i^2 < \infty.
\]
If we furthermore assume $\inf_{i\neq j\in I}|x_i - x_j| > 2R$, then the combination $\mu = \sum_{i\in I}m_i \delta_{x_i}$ is a stationary state for~\eqref{eq:GPY} since each particle $x_i$ has zero interaction with different particles $x_j$. We are not aware if all stationary states to~\eqref{eq:GPY} are combinations of Diracs described in this way. If instead we assume $\psi$ is analytic (hence, not compactly supported) and satisfies a growth condition, there is a characterisation of stationary states of~\eqref{eq:GPY} as a finite sum of Diracs~\cite[Proposition 2.2]{FR10}. 
\begin{lemma}[Second moment estimate]
	\label{lem:GPY2ndmomest}
For $\displaystyle \rho_0\in Z_{E_{GPY},2}$, let $\rho\in AC([0,\infty);\cP_2(\R))$ be the associated 2-curve of maximal slope for $E_{GPY}$ with respect to the upper gradient $G_{GPY}$. Then, we have the estimate
\begin{align}
\frac{\dx{}}{\dx{t}}\int |x|^2 \dx{\rho}_t(x) \leq 0 \, ,
\end{align}
for all $t \geq 0$.
\end{lemma}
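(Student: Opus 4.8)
The plan is to differentiate the second moment directly from the weak (distributional) formulation of~\eqref{eq:GPY}. Since the curve of maximal slope $\rho$ is (a.e.\ equal to) a weak solution of~\eqref{eq:GPY}, it solves the continuity equation $\partial_t\rho + \partial_x(\rho\,v)=0$ with velocity field $v=-\partial_x(\psi*\rho)=-(\psi'*\rho)$. First I would test against $\varphi(x)=\abs{x}^2$; since $\varphi'(x)=2x$ and $\partial_x(\psi*\rho)=\psi'*\rho$, an integration by parts gives, for a.e.\ $t\ge 0$,
\[
\frac{\dx{}}{\dx{t}}\int \abs{x}^2 \dx{\rho}_t(x) = -2\int x\,(\psi'*\rho_t)(x)\dx{\rho}_t(x) = -2\iint_{\R\times\R} x\,\psi'(x-y)\dx{\rho}_t(x)\dx{\rho}_t(y).
\]

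Next I would symmetrise the double integral. As $\psi$ is even, its derivative $\psi'$ is odd; relabelling $x\leftrightarrow y$ (permissible since $\rho_t\otimes\rho_t$ is symmetric) and using $\psi'(y-x)=-\psi'(x-y)$ yields
\[
\iint_{\R\times\R} x\,\psi'(x-y)\dx{\rho}_t(x)\dx{\rho}_t(y) = \frac12\iint_{\R\times\R} (x-y)\,\psi'(x-y)\dx{\rho}_t(x)\dx{\rho}_t(y).
\]
To close the argument I would use the explicit form of $\psi$: from $\psi(x)=\int_{-\infty}^x y\phi(y)\dx{y}$ we have $\psi'(z)=z\phi(z)$, and since $\phi\ge 0$ it follows that $(x-y)\psi'(x-y)=(x-y)^2\phi(x-y)\ge 0$. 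Combining the last two displays gives
\[
\frac{\dx{}}{\dx{t}}\int \abs{x}^2 \dx{\rho}_t(x) = -\iint_{\R\times\R}(x-y)^2\phi(x-y)\dx{\rho}_t(x)\dx{\rho}_t(y)\le 0,
\]
which is the claim.

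The main obstacle is the rigorous justification of testing the weak formulation against the quadratically growing function $\abs{x}^2$ and of differentiating under the integral. This is where I would be most careful: the velocity $v=-\psi'*\rho$ is globally bounded because $\psi'\in C_c^\infty(\R)$ (indeed $\norm{v}_{L^\infty}\le\norm{\psi'}_{L^\infty}$), while $\rho_t\in\cP_2(\R)$ has uniformly bounded first moment, so the product $x\,(\psi'*\rho_t)(x)$ is $\rho_t$-integrable with a bound independent of $t$ on compact time intervals. Approximating $\abs{x}^2$ by compactly supported test functions and passing to the limit by dominated convergence, together with the $W_2$-absolute continuity of $t\mapsto\rho(t)$, legitimises the first display and shows that $t\mapsto\int\abs{x}^2\dx{\rho}_t(x)$ is locally absolutely continuous with the stated a.e.\ derivative. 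Once this is in place, the symmetrisation and the sign of the integrand are elementary.
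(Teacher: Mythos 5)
Your argument is correct and follows essentially the same route as the paper's proof: testing the weak formulation against the second moment, symmetrising the resulting double integral using the oddness of $\psi'$, and invoking $\psi'(z)=z\phi(z)$ with $\phi\ge 0$ to obtain the sign. The paper likewise relegates the justification of the quadratic test function to a standard cut-off approximation, so your additional remarks on the boundedness of $\psi'*\rho$ only make the sketch slightly more explicit.
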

\begin{proof}

Based on~\cite{CDFLS11}, $\rho : t \mapsto \rho_t$ can be thought of as a solution to~\eqref{eq:GPY} in the sense of distributions. Formally, we test equation~\eqref{eq:GPY} against $|x|^2/2$ yielding
\begin{align}
\frac{\dx{}}{\dx{t}}\int \frac{|x|^2}{2}\dx{\rho}(x) 	&= - \int x [(\psi')*\rho](x) \dx{\rho}(x) 	\\
&= - \int x \int (x-y) \phi(x-y) \dx{\rho}(y) \dx{\rho}(x) 	\\
&= -\frac{1}{2}\iint |x-y|^2 \phi(x-y)\dx{\rho}(y) \dx{\rho}(x) \leq 0.
\end{align}
Here, we have used $\psi'(x) = x\phi(x)$, symmetry in swapping variables $x\leftrightarrow y$, and the positivity of $\phi$ to arrive at the conclusion. To make this argument rigorous, one can use a standard cut-off approximation of $|x|^2/2$.
\end{proof}
\begin{proposition}[Compactness of trajectories]
\label{prop:GPYrelcpct}
For $\rho_0\in Z_{E_{GPY},2}$, let $\rho\in AC([0,\infty); \cP_2(\R))$ be the associated 2-curve of maximal slope for $E_{GPY}$ with respect to the upper gradient $G_{GPY}$. We have that $\{\rho_t\}_{t\in[0,\infty)}$ is relatively compact in $\cP(\R)$ with limits in $\cP_2(\R)$. In particular,~\eqref{ass2subset} holds.
\end{proposition}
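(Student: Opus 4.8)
The plan is to obtain tightness of the trajectory from the dissipation of the second moment established in~\cref{lem:GPY2ndmomest}, since the interaction energy $E_{GPY}$ itself provides no confinement. Indeed, as $\psi \in C_c^\infty(\R)$ is bounded, $E_{GPY}$ is bounded on all of $\cP(\R)$, so its full sublevel sets $L_{\leq C}(E_{GPY})$ carry no moment control and are \emph{not} relatively compact in the $\sigma$-topology; this is precisely why we take the trajectory itself as the invariant set $\cY$ and verify~\eqref{ass2subset} rather than~\eqref{ass2}. First I would integrate the differential inequality of~\cref{lem:GPY2ndmomest}: since $t \mapsto \int_\R \abs{x}^2 \dx{\rho}_t$ is non-increasing and $\rho_0 \in \cP_2(\R)$, this yields the uniform bound
\[
\int_\R \abs{x}^2 \dx{\rho}_t \leq \int_\R \abs{x}^2 \dx{\rho}_0 =: M < \infty, \qquad \text{for all } t \geq 0.
\]
Chebyshev's inequality then converts this into uniform tightness: for every $R>0$,
\[
\rho_t\bra*{\set{\abs{x} > R}} \leq \frac{1}{R^2}\int_\R \abs{x}^2 \dx{\rho}_t \leq \frac{M}{R^2},
\]
with the right-hand side independent of $t$ and tending to $0$ as $R \to \infty$. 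By Prokhorov's theorem, the family $\set{\rho_t}_{t \geq 0}$ is relatively compact in $\cP(\R)$ for the narrow topology.

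It remains to show that every narrow limit lies in $\cP_2(\R)$, which is exactly where relative compactness in the $\sigma$-topology is stronger than mere narrow relative compactness. Given any sequence of times along which $\rho_{t_n} \to \mu$ narrowly, the lower semicontinuity of the second moment under narrow convergence --- obtained by applying Fatou's lemma to the truncations $\abs{x}^2 \wedge k$ and letting $k \to \infty$ --- gives
\[
\int_\R \abs{x}^2 \dx{\mu} \leq \liminf_{n \to \infty} \int_\R \abs{x}^2 \dx{\rho}_{t_n} \leq M,
\]
so that $\mu \in \cP_2(\R)$. Combining the two facts, every sequence drawn from $\set{\rho_t}_{t \geq 0}$ admits a subsequence converging narrowly to a limit in $\cP_2(\R)$; since the $\sigma$-topology on $\cP_2(\R)$ is exactly the (metrisable) narrow topology restricted to $\cP_2(\R)$, this is precisely relative compactness of the trajectory in the $\sigma$-topology. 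Setting $\cY := \bigcup_{t \geq 0}\set{\rho_t}$, the sublevel set appearing in~\eqref{ass2subset} satisfies $L_{\leq C}(E_{GPY}) \subseteq \cY$ by definition, and subsets of relatively compact sets are relatively compact, so~\eqref{ass2subset} follows and the proof of~\cref{prop:GPYabconv} is completed.

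The genuinely substantive ingredient is the a priori second-moment estimate of~\cref{lem:GPY2ndmomest}; once the second moment is known to dissipate, the remaining steps are entirely standard. The only point that requires care is ensuring that the limit points remain in $\cP_2(\R)$ rather than merely in $\cP(\R)$, which the lower semicontinuity of the second moment under narrow convergence secures, and which is what distinguishes the $\sigma$-relative compactness demanded by~\eqref{ass2subset} from plain narrow relative compactness.
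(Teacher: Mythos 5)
Your proposal is correct and follows essentially the same route as the paper: the uniform second-moment bound from \cref{lem:GPY2ndmomest}, Chebyshev plus Prokhorov for tightness in $\cP(\R)$, and Fatou's lemma to place the narrow limit points in $\cP_2(\R)$. The additional remarks on why~\eqref{ass2subset} rather than~\eqref{ass2} is the right assumption here, and on upgrading narrow relative compactness to $\sigma$-relative compactness, are accurate but not needed beyond what the paper records.
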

\begin{proof}
Our strategy for this proof will be to first show that the family $\{\rho_t\}_{t\in[0,\infty)}$ is tight in $\mathcal{P}(\R)$ using the previous ~\cref{lem:GPY2ndmomest}. The gain of moments will then also come from ~\cref{lem:GPY2ndmomest}.

\noindent \textit{Step 1: $\{\rho_t\}_{t\in[0,\infty)}$ is tight in $\mathcal{P}(\R)$}

Set $E_0:= \int |x|^2 \dx{\rho}_0(x) \geq \int |x|^2 \dx{\rho}_t(x)$ where the inequality comes from ~\cref{lem:GPY2ndmomest}. Fix $\eps>0$ and choose $R>0$ large enough such that $E_0/R^2 < \eps$. For every $t\geq 0$, we have the following chain of inequalities;
\[
\int_{B_R} \dx{\rho}_t(x) = 1 - \int_{\R\setminus B_R} \dx{\rho}_t(x) \geq 1 - \frac{1}{R^2}\int_{\R\setminus B_R} |x|^2\dx{\rho}_t(x) \geq 1 - \frac{E_0}{R^2} \geq 1-\eps.
\]
This establishes tightness in $\mathcal{P}(\R)$.

\noindent \textit{Step 2: Limits belong to $\cP_2(\R)$}

From the previous step, we can find $\bar{\rho}\in\mathcal{P}(\R)$ and a time-divergent sequence $(t_n)_{n\in\mathbb{N}}$ such that
\[
\rho_{t_n}\weakstar \bar{\rho} \quad \text{in }\mathcal{P}(\R).
\]
We use Fatou's lemma and lemma~\ref{lem:GPY2ndmomest} to give
\[
\int |x|^2 \dx{\bar{\rho}(x)} \leq \liminf_{n \to \infty}\int |x|^2 \dx{\rho}_{t_n}(x) \leq E_0.
\]
\end{proof}
At this point, we may conclude that our abstract theory applies and use the previous results to deduce convergence towards stationary states (which exist by Corollary~\ref{corollary:nonempty}). In fact, we may improve the convergence of ~\cref{prop:GPYabconv} to $W_{2-\eps}$ or, in the case of compactly supported initial data, $W_\infty$. We can provide a more concrete description with classical methods. 
\begin{proposition}[Sum of Diracs]
	\label{ex:particle}
Consider $\phi\in C_c^\infty(\R)$ a non-negative and even function described previously but now with the specific conditions
\[
\supp(\phi) = [-R,R], \quad \phi > 0 \text{ on } (-R,R).
\]
Let $N\in\mathbb{N}$ be fixed with weights $\omega^i\in[0,1]$ and initial positions $x_0^i \in \mathbb{R}$ such that
\[
\sum_{i=1}^N\omega^i = 1, \quad \sup_{i,j=1,\dots,N} |x_0^i - x_0^j| =: D < R.
\]
Then, for $\rho_0 = \sum_{i=1}^N \omega^i \delta_{x_0^i}$ as an initial condition to equation~\eqref{eq:GPY}, the corresponding solution $\rho$ converges weakly and in 1-Wasserstein exponentially to $\delta_{\sum_{i=1}^N \omega^i x_0^i}$ as $t\to\infty$. The rate depends on $\phi$ and $D$.
\end{proposition}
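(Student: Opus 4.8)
The plan is to reduce the PDE dynamics to a finite-dimensional ODE for the atoms and then run a contraction estimate on the variance of the empirical measure. First I would note that since $\psi\in C_c^\infty(\R)$ the field $\psi'$ is globally Lipschitz, so the particle system $\dot x^i(t) = -\sum_{j=1}^N \omega^j \psi'(x^i(t)-x^j(t))$, $x^i(0)=x_0^i$, has a unique global solution; using the identity $\psi'(x)=x\phi(x)$ recorded in~\cref{lem:GPY2ndmomest} this reads $\dot x^i = -\sum_j \omega^j (x^i-x^j)\phi(x^i-x^j)$. A direct computation shows that $t\mapsto \sum_i \omega^i\delta_{x^i(t)}$ is a distributional solution of~\eqref{eq:GPY}, and since weak solutions of~\eqref{eq:GPY} coincide with the unique curve of maximal slope (as recalled at the start of this subsection via~\cite{CDFLS11}), the curve issuing from $\rho_0$ stays a sum of $N$ Diracs whose positions solve this ODE. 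Verifying this equivalence and the preservation of the atomic structure rigorously is the main technical point; everything after it is elementary ODE analysis.

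Next I would record two structural facts. Since $\phi$ is even, the map $(i,j)\mapsto (x^i-x^j)\phi(x^i-x^j)$ is antisymmetric, so $\frac{\dx{}}{\dx{t}}\sum_i\omega^i x^i = -\sum_{i,j}\omega^i\omega^j(x^i-x^j)\phi(x^i-x^j)=0$ and the barycentre $\bar x:=\sum_i\omega^i x_0^i$ is conserved. Second, at the rightmost atom $x_{\max}(t):=\max_i x^i(t)$ every term of $\dot x^i$ has $(x_{\max}-x^j)\ge 0$ and $\phi\ge 0$, so $\dot x_{\max}\le 0$; likewise $x_{\min}$ is non-decreasing. Hence $\diam(t):=x_{\max}(t)-x_{\min}(t)$ is non-increasing (using one-sided derivatives at the finitely many crossing times, since the max of finitely many $C^1$ functions is Lipschitz with derivative governed by the active indices), so $\diam(t)\le D<R$ for all $t$. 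In particular $\abs{x^i-x^j}\le D$ throughout, and since $\phi$ is continuous and positive on $(-R,R)$ we obtain the uniform lower bound $\phi(x^i-x^j)\ge \phi_*:=\min_{\abs{r}\le D}\phi(r)>0$.

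I would then track the variance $V(t):=\sum_i\omega^i\abs{x^i(t)-\bar x}^2=\tfrac12\sum_{i,j}\omega^i\omega^j\abs{x^i-x^j}^2$. Differentiating (with $\bar x$ constant) and symmetrising in $i,j$ using $\phi(x^i-x^j)=\phi(x^j-x^i)$ collapses the cross terms to the clean identity $\frac{\dx{}}{\dx{t}}V = -\sum_{i,j}\omega^i\omega^j\abs{x^i-x^j}^2\phi(x^i-x^j)$. Inserting $\phi\ge\phi_*$ together with $\sum_{i,j}\omega^i\omega^j\abs{x^i-x^j}^2=2V$ yields $\frac{\dx{}}{\dx{t}}V\le -2\phi_* V$, whence by Grönwall $V(t)\le V(0)\,e^{-2\phi_* t}$, with $\phi_*$ depending only on $\phi$ and $D$ exactly as asserted.

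Finally I would convert this into the stated convergence. Since transporting $\rho(t)$ to a single Dirac just moves all mass to $\bar x$, Cauchy--Schwarz gives $W_1(\rho(t),\delta_{\bar x})= \sum_i\omega^i\abs{x^i(t)-\bar x}\le V(t)^{1/2}\le V(0)^{1/2}e^{-\phi_* t}$, which is exponential convergence in the $1$-Wasserstein distance and in particular forces $\rho(t)\to\delta_{\bar x}$ narrowly. As $\bar x=\sum_i\omega^i x_0^i$ by conservation of the barycentre, the limit is precisely $\delta_{\sum_i\omega^i x_0^i}$. The only delicate step is the initial reduction to the particle ODE; once that is in place the contraction estimate is entirely self-contained.
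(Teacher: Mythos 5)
Your proposal is correct, and while the overall skeleton matches the paper's (reduction to the particle ODE via uniqueness of weak solutions, conservation of the barycentre, diameter control to get a uniform lower bound on $\phi$, Gr\"onwall, then conversion to $W_1$), the core contraction step is genuinely different. The paper first proves a no-collision estimate (\cref{lem:nocollide}) so that the indices of the right-most and left-most atoms persist, shows the cloud contracts (\cref{cor:contractparticle}), and then runs Gr\"onwall on $\abs{x^k-\bar x}^2$ for the extremal particle alone, finally passing to the PDE by testing against $1$-Lipschitz functions. You instead contract the variance $V(t)=\sum_i\omega^i\abs{x^i-\bar x}^2$, for which symmetrisation gives the clean identity $\frac{\dx{}}{\dx{t}}V=-\sum_{i,j}\omega^i\omega^j\abs{x^i-x^j}^2\phi(x^i-x^j)\le -2\phi_* V$ (essentially \cref{lem:GPY2ndmomest} recentred at the barycentre), and you obtain the diameter bound directly from the monotonicity of $\max_i x^i$ and $\min_i x^i$ as Lipschitz envelopes of finitely many $C^1$ curves. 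Your route buys several things: it dispenses with \cref{lem:nocollide} and with the implicit assumption in \cref{cor:contractparticle} that the initial atoms are distinct (which the proposition as stated does not require, so your argument actually closes a small gap); the conversion to $W_1$ is immediate from $W_1(\rho(t),\delta_{\bar x})=\sum_i\omega^i\abs{x^i-\bar x}\le V(t)^{1/2}$ rather than via duality; and the variance identity is dimension-free, whereas the extremal-particle argument is genuinely one-dimensional. The paper's approach, in exchange, yields the extra qualitative information that the support interval $[x^l(t),x^k(t)]$ shrinks strictly, which is of independent interest but not needed for the stated conclusion. Both arguments produce the same exponential rate governed by $\min_{\abs{r}\le D}\phi(r)$.
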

We will prove this result after a few key lemmata.
\begin{lemma}[Evolution of particles]
\label{lem:particleevolution}
Assume the same notation and setting of ~\cref{ex:particle}. Then, $\rho_t = \sum_{i=1}^N \omega^i \delta_{x_t^i}$ is the solution to~\eqref{eq:GPY} where each particle evolves according to
\begin{equation}
\label{eq:particleevolution}
\frac{\dx{}}{\dx{t}}x^i = - \sum_{j=1}^N\omega^j\psi'(x^i-x^j), \quad x^i(0) = x_0^i.
\end{equation}
\end{lemma}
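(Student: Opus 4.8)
The plan is to construct the candidate solution from the particle ODE system and then verify directly that it solves~\eqref{eq:GPY} in the weak (distributional) sense, at which point the uniqueness of the curve of maximal slope established earlier in this subsection forces it to coincide with \emph{the} solution. First I would record the well-posedness of the ODE: since $\psi'(x) = x\phi(x)$ with $\phi \in C_c^\infty(\R)$, the map $\psi'$ is smooth, bounded, and globally Lipschitz, so the vector field $F^i(x^1,\dots,x^N) := -\sum_{j=1}^N \omega^j\psi'(x^i-x^j)$ on $\R^N$ is smooth and bounded. The Cauchy--Lipschitz theorem then yields a unique global-in-time solution $(x_t^1,\dots,x_t^N)$ with the prescribed initial data $x^i(0)=x_0^i$, and I define the candidate curve $\rho_t := \sum_{i=1}^N \omega^i\delta_{x_t^i}$. (Note that the diameter constraint $D<R$ from~\cref{ex:particle} plays no role here; it will only enter the subsequent convergence analysis.)

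Next I would check that $\rho \in AC([0,\infty);\cP_2(\R))$. Since $\abs{\dot x_t^i} \le \norm{\psi'}_{\Leb^\infty(\R)}\sum_j \omega^j = \norm{\psi'}_{\Leb^\infty(\R)}$, each trajectory is Lipschitz in time, so the particles remain in a bounded region on every finite interval and $\rho_t \in \cP_2(\R)$ for all $t$. Using the coupling that sends $\omega^i\delta_{x_s^i}$ to $\omega^i\delta_{x_t^i}$ gives $W_2(\rho_s,\rho_t) \le \bra*{\sum_i \omega^i\abs{x_s^i-x_t^i}^2}^{1/2} \le \norm{\psi'}_{\Leb^\infty(\R)}\abs{t-s}$, so $\rho$ is in fact Lipschitz as a curve in $\cP_2(\R)$, hence absolutely continuous.

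The core step is the weak-formulation check. For $\varphi \in C_c^\infty(\R)$, integrating~\eqref{eq:GPY} by parts gives the distributional identity $\frac{\dx{}}{\dx{t}}\int \varphi \dx{\rho_t} = -\int \varphi'(x)\,\partial_x(\psi*\rho_t)(x)\dx{\rho_t}(x)$. Differentiating the left-hand side along the flow yields $\sum_i \omega^i \varphi'(x_t^i)\dot x_t^i$, while $\partial_x(\psi*\rho_t)(x) = (\psi'*\rho_t)(x) = \sum_j \omega^j\psi'(x-x_t^j)$, so the right-hand side equals $-\sum_i \omega^i\varphi'(x_t^i)\sum_j \omega^j\psi'(x_t^i-x_t^j)$. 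Substituting~\eqref{eq:particleevolution} into the left-hand side makes the two sides agree for every $\varphi$. Here I would emphasise that the diagonal term $j=i$ contributes $\omega^i\psi'(0)=0$, because $\psi$ is even and hence $\psi'$ is odd; this makes the self-interaction harmless and the evolution unambiguous even if two particles collide. Invoking the equivalence between weak solutions of~\eqref{eq:GPY} and the unique $2$-curve of maximal slope of $E_{GPY}$ established earlier in this subsection, the verified weak solution $\rho_t$ must be the curve of maximal slope, which proves the claim.

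I expect the only genuinely delicate point to be bookkeeping rather than hard analysis: confirming that the distributional notion of solution verified above matches precisely the one for which uniqueness is asserted, and handling the self-interaction and possible particle collisions cleanly via the oddness of $\psi'$. The remaining ingredients---global well-posedness of the finite-dimensional ODE and the $W_2$-Lipschitz estimate---are routine consequences of the $C_c^\infty$ regularity of $\psi$.
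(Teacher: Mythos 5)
Your proposal is correct and follows essentially the same route as the paper's proof: global well-posedness of the particle ODE via Cauchy--Lipschitz, direct verification of the weak formulation by differentiating $\sum_i \omega^i\tau(x_t^i)$ along the flow, and an appeal to uniqueness of weak solutions (which the paper sources from a Dobrushin-type argument and you source, equivalently, from the uniqueness of the curve of maximal slope stated earlier in the subsection). Your additional remarks --- the $W_2$-Lipschitz estimate showing $\rho\in AC([0,\infty);\cP_2(\R))$ and the vanishing of the self-interaction term via the oddness of $\psi'$ --- are correct refinements of details the paper leaves implicit.
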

\begin{proof}
The right-hand side of~\eqref{eq:particleevolution} is smooth and compactly supported in each $x^i$, hence classical Cauchy-Lipschitz theory guarantees existence and uniqueness of solutions for each $x^i$ for all times. For any test function $\tau$, we have
\begin{align}
&\quad \frac{\dx{}}{\dx{t}}\int_{-\infty}^\infty \tau(x)\dx{\rho}_t(x) 	= \frac{\dx{}}{\dx{t}}\sum_{i=1}^N \omega^i\tau(x^i) = \sum_{i=1}^N \omega^i \tau'(x^i)\frac{\dx{}}{\dx{t}}x^i = - \sum_{i=1}^N \omega^i\tau'(x^i)\sum_{j=1}^N \omega^j\psi'(x^i-x^j) 	\\
&= - \sum_{i=1}^N\omega^i\tau'(x^i) (\psi'*\rho_t)(x^i) = - \int_{-\infty}^\infty \tau'(x)\partial_x(\psi * \rho_t)(x) \dx{\rho}_t(x).
\end{align}
This is the weak formulation of~\eqref{eq:GPY} hence $\rho_t$ defined in the statement of the lemma is a solution. The interaction $\psi'$ has the necessary regularity to apply standard results that can be found in~\cite{Gol16} to deduce uniqueness.
\end{proof}
We turn to studying properties of the dynamical system given in~\eqref{eq:particleevolution} since these translate into crucial statements at the level of the PDE~\eqref{eq:GPY}. Using an argument from~\cite{MPPD19}, we have
\begin{lemma}[Particles do not collide in finite time]
\label{lem:nocollide}
Assume the same notation and setting of ~\cref{ex:particle}. For $i=1,\dots,N$ take the unique solutions $x^i$ to the ODE system~\eqref{eq:particleevolution}. There exists a constant $K>0$ depending only on $\psi$ such that
\[
|x^i(t) - x^j(t)| \geq |x_0^i-x_0^j|e^{-Kt} \, ,
\]
for all $i,j=1,\dots,N$  and all  $t >0$.
\end{lemma}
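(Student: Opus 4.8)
The plan is to control the pairwise differences $d^{ij}(t) := x^i(t) - x^j(t)$ directly via a Grönwall argument, exploiting that $\psi'$ is globally Lipschitz. Since $\psi \in C_c^\infty(\R)$, we have $\psi'(x) = x\phi(x)$ and hence $\psi''(x) = \phi(x) + x\phi'(x)$ is bounded, so $\psi'$ is Lipschitz with constant $L := \norm{\psi''}_{\Leb^\infty(\R)}$, a quantity depending only on $\psi$. This $L$ will play the role of the constant $K$ in the statement.

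First I would differentiate $d^{ij}$ using the ODE system~\eqref{eq:particleevolution}, obtaining
\[
\frac{\dx{}}{\dx{t}}d^{ij} = \sum_{k=1}^N \omega^k \bra*{\psi'(x^j - x^k) - \psi'(x^i - x^k)}.
\]
Applying the Lipschitz bound termwise gives $\abs*{\psi'(x^j-x^k) - \psi'(x^i-x^k)} \leq L\abs{x^i - x^j} = L\abs{d^{ij}}$, and since the weights satisfy $\sum_{k} \omega^k = 1$, this yields the pointwise estimate $\abs*{\frac{\dx{}}{\dx{t}}d^{ij}} \leq L\abs{d^{ij}}$, uniformly in the configuration and in the indices $i,j$.

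To avoid differentiating $\abs{d^{ij}}$ at instants where particles might coincide, I would instead work with the smooth quantity $f(t) := \abs{d^{ij}(t)}^2$. Using the previous bound,
\[
\frac{\dx{}}{\dx{t}}f = 2 d^{ij}\,\frac{\dx{}}{\dx{t}}d^{ij} \geq -2\abs{d^{ij}}\,\abs*{\frac{\dx{}}{\dx{t}}d^{ij}} \geq -2L f.
\]
Grönwall's inequality then gives $f(t) \geq f(0)\,e^{-2Lt}$, and taking square roots produces $\abs{x^i(t) - x^j(t)} \geq \abs{x_0^i - x_0^j}\,e^{-Lt}$, which is precisely the claim with $K = L$.

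I do not expect a serious obstacle here: the only delicate point is the non-differentiability of $\abs{d^{ij}}$ when two particles collide, which the passage to $f = \abs{d^{ij}}^2$ circumvents entirely. The main thing to check carefully is that $L$ can be taken independent of the indices $i,j$ and of the positions of the other particles, which follows immediately from the global Lipschitz bound on $\psi'$ together with the normalisation $\sum_{k}\omega^k = 1$.
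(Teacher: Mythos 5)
Your proof is correct and follows essentially the same route as the paper: both differentiate the squared separation $|x^i-x^j|^2$, bound the increment via the global Lipschitz constant $\norm{\psi''}_{\Leb^\infty(\R)}$ of $\psi'$ together with $\sum_k \omega^k = 1$, and conclude with Gr\"onwall's inequality. The only cosmetic difference is that the paper phrases the Lipschitz step as a mean-value inequality, which is the same estimate.
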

\begin{proof}
We study the evolution of $|x^i-x^j|^2$ which yields
\begin{align}
\frac{\dx{}}{\dx{t}}|x^i-x^j|^2 	&= 2(x^i-x^j)\left(\frac{\dx{}}{\dx{t}}x^i-\frac{\dx{}}{\dx{t}}x^j\right) 	= - 2(x^i-x^j)\sum_{k=1}^N\omega^k[\psi'(x^i-x^k) - \psi'(x^j-x^k)] 	\\
&\geq -2|x^i-x^j|\sum_{k=1}^N\omega^k|\psi'(x^i-x^k) - \psi'(x^j-x^k)| 	\\
&\geq -2|x^i-x^j|^2 \sum_{k=1}^N\omega^k \sup_{\xi\in\mathbb{R}}|\psi''(\xi-x^k)| 	\\
&\geq -2K|x^i-x^j|^2.
\end{align}
The second line comes from Cauchy-Schwarz inequality. The third line makes use of a Mean-Value type inequality. Finally, the fourth line comes from using the smoothness and compact support of $\psi$ to produce the constant $K$. We conclude after applying Gr\"onwall's inequality on the differential inequality.
\end{proof}
One important consequence of this lemma is the following corollary.
\begin{corollary}[Contracting particle cloud]
	\label{cor:contractparticle}
	Assume the same notation and setting of~\cref{ex:particle}. For $i=1,\dots,N$ take the unique solutions $x^i$ to the ODE system~\eqref{eq:particleevolution}. Assume each initial particle is unique so that $x_0^i\neq x_0^j, \, $ for all $ i\neq j$. Let $k,l$ denote the indices of the right-most and left-most particles at time zero, respectively. In other words,
	\[
	x_0^k = \max_{i=1,\dots,N}x_0^i, \quad x_0^l = \min_{i=1,\dots,N}x_0^i.
	\]
	Then, these indices still mark the right-most and left-most particles at all future times, meaning
	\[
	x^k(t) = \max_{i=1,\dots,N}x^i(t), \quad x^l(t) = \min_{i=1,\dots,N}x^i(t), \quad \textrm{ for all } t\geq 0.
	\]
	Furthermore, the particle cloud is contracting in the sense that
	\[
	\frac{\dx{}}{\dx{t}}x^k < 0, \quad \frac{\dx{}}{\dx{t}}x^l > 0, \quad \textrm{ for all } t>0.
	\]
\end{corollary}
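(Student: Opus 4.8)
The plan is to work entirely at the level of the finite-dimensional ODE system~\eqref{eq:particleevolution}, which by~\cref{lem:particleevolution} governs the evolution of the particles. Using the identity $\psi'(x)=x\phi(x)$, the velocity of the $i$-th particle rewrites as
\[
\frac{\dx{}}{\dx{t}}x^i = -\sum_{j=1}^N \omega^j (x^i-x^j)\phi(x^i-x^j),
\]
so that all sign information is carried by the factors $(x^i-x^j)$ and by the non-negativity of $\phi$. First I would freeze the ordering of the particles for all time: by~\cref{lem:nocollide} we have $\abs{x^i(t)-x^j(t)} \ge \abs{x_0^i-x_0^j}e^{-Kt} > 0$ whenever $x_0^i \ne x_0^j$, so no two trajectories ever meet. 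Since each map $t \mapsto x^i(t)-x^j(t)$ is continuous and never vanishes, it keeps its initial sign; in particular $x^k(t)-x^i(t)>0$ and $x^i(t)-x^l(t)>0$ for every $i \ne k$ (resp. $i \ne l$) and all $t \ge 0$, so $k$ and $l$ remain the right-most and left-most indices.

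The second step is to control the diameter $d(t):=x^k(t)-x^l(t)$ so that all active pairwise gaps stay inside the set where $\phi$ is positive. Because $x^k$ is the maximum, every summand $(x^k-x^j)\phi(x^k-x^j)$ is non-negative, so $\frac{\dx{}}{\dx{t}}x^k \le 0$; symmetrically, because $x^l$ is the minimum, every summand $(x^l-x^j)\phi(x^l-x^j)$ is non-positive, so $\frac{\dx{}}{\dx{t}}x^l \ge 0$. Subtracting gives $\frac{\dx{}}{\dx{t}}d(t) \le 0$, whence $d(t) \le d(0)=D < R$ for all $t \ge 0$. This is the crucial a priori bound: it guarantees $0 < x^k(t)-x^j(t) \le d(t) < R$ for every $j \ne k$, i.e.\ all active gaps remain strictly inside $(-R,R)$, where $\phi>0$.

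With this in hand the strict contraction is immediate. For the right-most particle, each term $(x^k-x^j)\phi(x^k-x^j)$ with $j \ne k$ is strictly positive, since the gap lies in $(0,R)$ and $\phi>0$ there; as $N \ge 2$ at least one such term is present, so $\frac{\dx{}}{\dx{t}}x^k < 0$. The same argument at the left-most particle, where each term with $j \ne l$ is strictly negative, yields $\frac{\dx{}}{\dx{t}}x^l > 0$.

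I expect the only genuine subtlety to be the diameter estimate of the second step: without the a priori bound $d(t)<R$ one could not exclude the gaps escaping the support of $\phi$, where the interaction switches off and the strict inequalities would fail. The monotonicity $\frac{\dx{}}{\dx{t}}d \le 0$ closes this loophole cleanly and makes the whole argument self-contained; order preservation and the elementary sign bookkeeping are then routine. (The degenerate case $N=1$ is excluded implicitly, since a single particle is stationary and the notion of distinct right-most and left-most particles is vacuous.)
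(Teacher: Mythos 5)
Your proposal is correct and follows essentially the same route as the paper's proof: order preservation via \cref{lem:nocollide}, the monotone-diameter bound $x^k-x^l\leq D<R$ obtained from the sign of $\psi'=x\phi(x)$, and then strict positivity of $\phi$ on $(0,R)$ to conclude the strict inequalities. Your explicit remarks that at least one off-diagonal term is needed ($N\geq 2$) and that the diameter bound is the crux are accurate but the argument is otherwise identical to the paper's.
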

\begin{proof}
From ~\cref{lem:nocollide}, we immediately establish the first part of the result saying
\[
x^k(t) = \max_{i=1,\dots,N}x^i(t), \quad x^l(t) = \min_{i=1,\dots,N}x^i(t), \quad \textrm{ for all } t\geq 0.	
\]
For the second part, we first claim that the particle cloud remains in $(-R,R)$, that is
\begin{equation}
\label{eq:cloudinR}
\sup_{i,j=1,\dots,N} |x^i(t)-x^j(t)| < R, \quad \textrm{ for all } t\geq0.
\end{equation}
We prove this claim by re-writing the particle cloud diameter in terms of $x^k-x^l$, indeed ~\cref{lem:nocollide} guarantees
\[
\sup_{i,j=1,\dots,N} |x^i(t)-x^j(t)| = x^k(t) - x^l(t).
\]
Now, we study the evolution of the right-hand side.
\[
\frac{\dx{}}{\dx{t}}(x^k-x^l) = -\sum_{j=1}^N\omega^j(\psi'(x^k-x^j)-\psi'(x^l-x^j)) \leq 0.
\]
The inequality comes from the following. By definition, $x^k-x^j > 0$ and $x^l - x^j <0$ for every index $j=1,\dots,N$. Recalling that $\phi\geq0$ and $\psi'(x) = x\phi(x)$, we have that
\[
\psi'(x^l-x^j) \leq 0 \leq \psi'(x^k-x^j), \quad \textrm{ for all } j=1,\dots,N.
\]
Therefore, the claimed inequality~\eqref{eq:cloudinR} holds since $\sup_{i,j=1,\dots,N} |x_0^i-x_0^j| = D < R$. 

We return to $\frac{\dx{}}{\dx{t}}x^k$ and remark that the following computations are easily extended to $\frac{\dx{}}{\dx{t}}x^l$. Observe
\[
\frac{\dx{}}{\dx{t}}x^k = - \sum_{j=1}^N\omega^j \psi'(x^k-x^j) = -\sum_{j=1}^N \omega^j (x^k-x^j)\phi(x^k-x^j) < 0, \quad \textrm{ for all } t\geq0.
\]
Here, we once again use the property that $x^k-x^j > 0$ for all times $t\geq0$. Furthermore, the claimed inequality~\eqref{eq:cloudinR} guarantees $\phi(x^k-x^j) > 0$ by our assumption on $\phi$ in~\cref{ex:particle}.
\end{proof}
We are now in a position to prove ~\cref{ex:particle}.
\begin{proof}[Proof of ~\cref{ex:particle}]
The proof is broken down into a few steps. To fix ideas, we will take the $x^i$ satisfying~\eqref{eq:particleevolution} from ~\cref{lem:particleevolution} and the indices $k,l$ denoting the right-most and left-most particles, respectively. Recall that these indices are well-defined and persist for all time by Corollary~\ref{cor:contractparticle}. This strategy is similar to the argument in~\cite{CDFLS11}.
\begin{enumerate}
	\item Establish the stationarity of the centre of mass $\sum_{j=1}^N \omega^j x_t^j = \sum_{j=1}^N \omega^j x_0^j$.
	\item Show that each $x^i$ converges with exponential rate to the centre of mass $\sum_{j=1}^N\omega^jx_0^j$.
	\item Convert the convergence at the particle level to convergence at the PDE level.
\end{enumerate}
\noindent \textit{Step 1:} We compute
\begin{align}
&\quad \frac{\dx{}}{\dx{t}}\left(
\sum_{j=1}^N\omega^jx_t^j
\right) = -\sum_{j=1}^N\omega^j \sum_{i=1}^N\omega^i\psi'(x^j-x^i) 	\\ &= -\frac{1}{2}\sum_{j=1}^N\sum_{i=1}^N\omega^j\omega^i\psi'(x^j-x^i) - \frac{1}{2}\sum_{i=1}^N\sum_{j=1}^N\omega^i\omega^j\psi'(x^i-x^j) 	\\
&= -\frac{1}{2}\sum_{j=1}^N\sum_{i=1}^N\omega^j\omega^i\psi'(x^j-x^i) + \frac{1}{2}\sum_{i=1}^N\sum_{j=1}^N\omega^i\omega^j\psi'(x^j-x^i) = 0.
\end{align}
Here we recall that $\psi'$ is odd since $\psi'(x) = x\phi(x)$ and $\phi$ is assumed to be even.

\noindent\textit{Step 2:} It suffices to consider the convergence of the right-most particle $x^k$. The same computation will give the convergence of the left-most particle $x^l$. We compute
\begin{align}
&\quad \frac{\dx{}}{\dx{t}}\left(\frac{1}{2}\left|
x^k - \sum_{j=1}^N\omega^j x_0^j
\right|^2\right) = -\left(
x^k - \sum_{j=1}^N\omega^j x^j
\right)\sum_{m=1}^N\omega^m \psi'(x^k-x^m) 	\\
&= - \left(
\sum_{j=1}^N \omega^j(x^k - x^j)
\right)\sum_{m=1}^N \omega^m (x^k-x^m)\phi(x^k-x^m) 	\\
&\leq -\delta \frac{1}{2}\left(
\sum_{j=1}^N \omega^j(x^k - x^j)
\right)^2 = - \delta \frac{1}{2}\left|
x^k - \sum_{j=1}^N\omega^j x_0^j
\right|^2.
\end{align}
We explain the appearance of $\delta>0$ in the last line. We assumed that the initial particle cloud has diameter strictly less than $R$ which leads to the estimate, using the contracting property of Corollary~\ref{cor:contractparticle}
\[
\sup_{m=1,\dots,N}|x^k-x^m| \leq \sup_{m=1,\dots,N}|x_0^k-x_0^m| \leq \sup_{i,j=1,\dots,N}|x_0^i-x_0^j| = D < R.
\]
Hence, we have a lower bound of $\phi(x^k-x^m)$ for any $m=1,\dots,N$ by
\[
\inf_{m=1,\dots,N}\phi(x^k-x^m) \geq \inf_{m=1,\dots,N}\phi(x_0^k-x_0^m)\geq \inf_{x\in[-D,D]}\phi(x) =: \delta/2>0.
\]
We are guaranteed that $\delta>0$ by assumption that $\phi >0$ on $(-R,R)$ and the regularity of $\phi$. We conclude by Gr\"onwall's inequality
\[
\left|
x^k - \sum_{j=1}^N\omega^j x_0^j
\right|^2 \leq \left|
x_0^k - \sum_{j=1}^N\omega^j x_0^j
\right|^2e^{-\delta t}.
\]
\noindent \textit{Step 3:} We take the convergence result of Step 2 at the level of the characteristic system~\eqref{eq:particleevolution} and interpret this in the context of the original PDE~\eqref{eq:GPY}. We will use the well-known dual form of the 1-Wasserstein distance. Fix a Lipscthitz test function $\tau$ with $|\tau'|\leq 1$. By ~\cref{lem:particleevolution}, we test the solution $\rho_t = \sum_{i=1}^N\omega^i \delta_{x_t^i}$ against $\tau$
\begin{align}
&\quad \left|\langle \rho_t,\tau \rangle - \left\langle \delta_{\sum_{j=1}^N\omega^j x_0^j},\tau\right\rangle\right| = \left|
\sum_{i=1}^N\omega^i\left[
\tau(x^i) - \tau\left(
\sum_{j=1}^N\omega^j x_0^j
\right)
\right]
\right| 	\\
&\leq 
\sum_{i=1}^N\omega^i\left|
\tau(x^i) - \tau\left(
\sum_{j=1}^N\omega^j x_0^j
\right)
\right| \leq \sum_{i=1}^N\omega^i \left|
x^i - \sum_{j=1}^N\omega^j x_0^j
\right||\tau'(\xi^i)| 	\\
&\leq \sum_{i=1}^N \omega^i \left|
x_0^i - \sum_{j=1}^N\omega^j x_0^j
\right|e^{-(\delta/2) t} \lesssim e^{-(\delta/2) t}.
\end{align}
In the second line, we have used the Mean-Value inequality where $\xi^i$ is some number between $x^i$ and $\sum_{j=1}^N \omega^jx_0^j$ as well as the Lipschitz assumption of $\tau$. This shows the exponential convergence as $t\to\infty$ in 1-Wasserstein.
\end{proof}
By well-posedness theory with probability measures as initial data~\cite{CDFLS11} or a standard mean-field limit Dobrushin's argument in~\cite{Gol16}, the previous results can be extended to general probability measures.
\begin{proposition}
Consider $\phi\in C_c^\infty(\R)$ satisfying the same assumptions as in~\cref{ex:particle}. Take $\rho_0\in \mathcal{P}(\R)$ such that
\[
\diam(\supp(\rho_0)) \leq D < R,
\]
with mean $m = \int_{-\infty}^\infty x\dx{\rho}_0(x)$. Then, there is a unique weak solution $\rho\in C([0,\infty);\mathcal{P}(\R))$ to equation~\eqref{eq:GPY} with initial condition $\rho_0$. Furthermore, there is exponential convergence in 1-Wasserstein to consensus;
\[
\rho \to \delta_m, \quad t\to \infty.
\]
\end{proposition}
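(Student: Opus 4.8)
The plan is to deduce the general statement from the particle-level result~\cref{ex:particle} by passing to a Lagrangian (characteristic) description, which has the advantage of producing a contraction estimate that is uniform in time. Since $\phi$ is compactly supported we have $\rho_0\in\cP_2(\R)$, so the existence and uniqueness of a weak solution $\rho\in C([0,\infty);\cP(\R))$ is exactly the well-posedness theory of~\cite{CDFLS11}. Moreover, because $\psi\in C_c^\infty(\R)$ the velocity field $b_t:=-\psi'*\rho_t$ is globally Lipschitz with $\norm{b_t}_{\Lip}\leq \norm{\psi''}_{\Leb^\infty}$ uniformly in $t$, so the Dobrushin/characteristic theory of~\cite{Gol16} identifies this unique solution with the pushforward $\rho_t=(X_t)_\#\rho_0$, where $X_t$ is the flow of $\dot{X}_t(x)=b_t(X_t(x))$, $X_0(x)=x$. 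As $b_t$ is Lipschitz, $X_t$ is an orientation-preserving homeomorphism of $\R$; in particular it preserves the order of points and $\supp\rho_t=X_t(\supp\rho_0)$.

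First I would track the support. Writing $a_0=\min\supp\rho_0$ and $b_0=\max\supp\rho_0$ (so $b_0-a_0\leq D<R$), set $a(t):=X_t(a_0)$ and $b(t):=X_t(b_0)$; by order preservation $\supp\rho_t\subseteq[a(t),b(t)]$, and $a,b$ solve the characteristic ODE through the fixed points $a_0,b_0$. Using $\psi'(z)=z\phi(z)$ together with $\phi\geq0$, for every $z\in[a(t),b(t)]$ both $(b-z)\phi(b-z)\geq0$ and $(z-a)\phi(z-a)\geq0$, so $\dot b\leq0\leq\dot a$ and the diameter $b(t)-a(t)$ is non-increasing. Hence it remains $\leq D<R$ for all times, and the cloud never leaves the region where $\phi$ is bounded below: setting $c_0:=\inf_{[-D,D]}\phi>0$ (positive by $\phi>0$ on $(-R,R)$ and $\phi\in C^\infty$), the identity
\[
\frac{\dx{}}{\dx{t}}(b-a) = -\int \bigl[(b-z)\phi(b-z) + (z-a)\phi(z-a)\bigr] \dx{\rho}_t(z) \leq -c_0\,(b-a)
\]
combined with Grönwall gives $b(t)-a(t)\leq D\,e^{-c_0 t}$. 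This is the exact continuous analogue of~\cref{cor:contractparticle}, with the empirical sum replaced by $\rho_t$.

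Next I would show the mean is conserved. Testing~\eqref{eq:GPY} against $x$ (rigorously via the same cut-off approximation used in~\cref{lem:GPY2ndmomest}) and using that $\psi'(z)=z\phi(z)$ is odd, the double integral $\iint\psi'(x-y)\dx{\rho}_t(x)\dx{\rho}_t(y)$ vanishes by antisymmetry, so $\frac{\dx{}}{\dx{t}}\int x\dx{\rho}_t=0$ and $\int x\dx{\rho}_t(x)=m$ for all $t$. Since $\rho_t$ is a probability measure supported in $[a(t),b(t)]$, its mean $m$ lies in $[a(t),b(t)]$, so every point of $\supp\rho_t$ is within $b(t)-a(t)$ of $m$. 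Consequently
\[
W_1(\rho_t,\delta_m) = \int |x-m|\dx{\rho}_t(x) \leq b(t)-a(t) \leq D\,e^{-c_0 t},
\]
which is exponential convergence to consensus; the narrow convergence $\rho\to\delta_m$ follows since $W_1$-convergence implies narrow convergence.

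The main obstacle is the rigorous justification of the Lagrangian representation for merely measure-valued (possibly non-atomic) data: one must confirm that the unique solution of~\eqref{eq:GPY} genuinely coincides with $(X_t)_\#\rho_0$ and that $\supp\rho_t=X_t(\supp\rho_0)$, which is precisely what the well-posedness of~\cite{CDFLS11} and the stability estimates of~\cite{Gol16} furnish. Once the characteristic picture is in place everything is elementary, the only delicate point being the bootstrapping: one first uses $\phi\geq0$ to show the diameter is non-increasing (hence stays $<R$), and only then invokes the uniform lower bound $c_0$ on $\phi$ to close the exponential estimate.
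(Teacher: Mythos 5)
Your argument is correct, and it takes a genuinely different route from the paper. The paper proves the exponential consensus result in detail only for empirical measures (the particle system of~\cref{ex:particle}, via~\cref{lem:particleevolution,lem:nocollide,cor:contractparticle}) and then asserts that the general case follows by approximating $\rho_0$ with sums of Diracs and invoking the well-posedness/stability theory of~\cite{CDFLS11} and the Dobrushin estimate of~\cite{Gol16}; making that limit passage quantitative requires combining the time-growing stability bound $W_1(\rho_t,\rho_t^N)\leq e^{Lt}W_1(\rho_0,\rho_0^N)$ with the $N$-uniform particle decay rate and sending $N\to\infty$ at each fixed $t$. You instead work directly at the continuum level: you use the Lagrangian flow of the (uniformly Lipschitz) velocity $-\psi'*\rho_t$, track the extreme points $a(t),b(t)$ of the support, bootstrap the sign condition $\dot a\geq 0\geq\dot b$ to keep the diameter below $D<R$, and then close the Gr\"onwall estimate with $c_0=\inf_{[-D,D]}\phi>0$; conservation of the mean and the identity $W_1(\mu,\delta_m)=\int\abs{x-m}\dx{\mu}$ finish the proof. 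Each step checks out: order preservation of the one-dimensional flow gives $\supp\rho_t\subseteq[a(t),b(t)]$, evenness of $\phi$ gives the stated formula for $\frac{\dx{}}{\dx{t}}(b-a)$, and the mean of a measure supported in $[a(t),b(t)]$ lies in that interval. Your approach buys a cleaner, self-contained quantitative proof with an explicit rate $c_0$ depending only on $\phi$ and $D$, avoiding the $N\to\infty$ diagonal argument entirely; the paper's approach has the advantage of reusing the particle computation verbatim and of generalising to settings where a Lagrangian representation is less readily available (e.g.\ merely Lipschitz or pointy kernels, where~\cite{CDFLS11} still applies but the flow map is harder to control). The only point you should make fully explicit is the identification of the unique weak solution of~\eqref{eq:GPY} with the pushforward $(X_t)_{\#}\rho_0$, but for $\psi\in C_c^\infty(\R)$ this is exactly the characteristic representation already used in~\cref{lem:particleevolution} via~\cite{Gol16}, so there is no gap.
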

\subsection{Application to a case of non-unique curves of maximal slope}
\label{sec:nonuniq}
The purpose of this example is to emphasise the dependence of the convergence results and the $\omega$-limit sets to the initial data and the curves of maximal slope emanating from it.  Our definition of stationary states is independent of initial data and solutions, however in~\cref{thm:converge,thm:converge'}, it is possible that different curves of maximal slope from the same initial data might be converging to disjoint subsets of the set of stationary states. Consider the following interaction potential
\begin{align}
W(x)=
\begin{cases}
\textrm{sign}(\abs{x}-1)\abs*{\abs{x}-1}^{3/2} & \abs{x}>\frac{3}{4} \\
\dfrac{\abs{x}^2}{2}- \dfrac{5}{32} & \abs{x} \leq \frac{3}{4}
\end{cases}
\, ,
\end{align}
along with the associated aggregation equation given by
\begin{align}
\partial_t \rho = \partial_x \bra{\rho \partial_x W * \rho} \, .
\end{align}

Note that $W \in C^1(\R)$ (see~\eqref{eq:gradW}) and $W (x)\leq C\bra*{1+ \abs{x}^2}$ but it is not $\lambda$-convex for any $\lambda \in \R$. Thus, it is not a \emph{pointy potential} or a variation of it in the sense of~\cite{CDFLS11,CLM14}. We search for solutions $\rho \in AC([0,\infty); \cP_2(\R))$ that are $2$-curves of maximal slope of the energy $E: \cP_2(\R) \to (-\infty,+\infty]$ given by
\begin{align}
E(\mu)= \frac{1}{2} \iint_{\R \times \R}W(x-y) \mu(\dx{x}) \mu(\dx{y}) \, ,
\end{align}
with respect to the upper gradient $G : \cP_2(\R) \to [0, \infty]$ given by
\begin{align}
G(\mu)= \bra*{\int_\R \abs{\partial_x W * \mu}^2 \mu(\dx{x}) }^{1/2} \, ,
\end{align}
for some initial data $\rho(0)=\mu_0 \in \cP_2(\R)$.

Based on the forms of $E$ and $G$, it is easy to check that~\eqref{ass1'} and~\eqref{ass3'} are true. For simplicity, we will not address ~\eqref{ass2'} in full generality. In the setting of~\cref{prop:nonuniq}, the initial datum is symmetric on $\R$. Since the interaction potential, $W$, is also symmetric, any gradient flow solution is also symmetric. Therefore, we verify~\eqref{ass2'subset} with $\mathcal{Y}$ being  the set of all symmetric probability measures.
\begin{lemma}
\label{lem:nonuniqcpct}
The sublevel set $L_{\le C}(E) = \{ \mu \in \cP_2(\R) \cap \mathcal{Y} \, : \, E(\mu) \le C \}$ is relatively compact in $\cP(\R)$ where $\mathcal{Y}$ is the set of symmetric probability measures on $\R$.
\end{lemma}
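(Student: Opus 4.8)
The plan is to establish uniform tightness of the sublevel set $L_{\le C}(E)$ and then invoke Prokhorov's theorem to conclude relative compactness in $\cP(\R)$. The argument rests on two elementary structural facts about the potential, both immediate from the explicit formula: since $W$ is continuous and $W(r)=(r-1)^{3/2}\to+\infty$ as $r\to\infty$, it attains its minimum, so there is a constant $c_0>0$ with $W\ge -c_0$ on all of $\R$; moreover $W$ is even and increasing on $[1,\infty)$. The symmetry constraint $\mu\in\mathcal{Y}$ will be used only through the observation that a measure symmetric about the origin satisfies $\mu(\set{y\le 0})\ge \tfrac12$ and $\mu(\set{y\ge 0})\ge\tfrac12$, and $\mu(\set{x\ge R})=\mu(\set{x\le -R})$ for every $R$.

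The heart of the proof is a lower bound on $E(\mu)$ that penalises mass far from the origin. Fixing $R\ge 1$, I would split the outer integral in $2E(\mu)=\iint_{\R\times\R} W(x-y)\,\mu(\dx{x})\mu(\dx{y})$ over the three regions $x\ge R$, $x\le -R$, and $\abs{x}<R$. On the middle region one simply uses $W\ge -c_0$, so its contribution is at least $-c_0$. On the region $x\ge R$, every $y\le 0$ gives $x-y\ge x\ge R\ge 1$, whence monotonicity yields $W(x-y)\ge W(R)\ge 0$; combined with $\mu(\set{y\le 0})\ge\tfrac12$ and $W\ge -c_0$ on the complementary half, the inner integral is bounded below by $\tfrac12 W(R)-c_0$. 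The region $x\le -R$ is treated identically using $y\ge 0$. Writing $p_R:=\mu(\set{\abs{x}\ge R})=\mu(\set{x\ge R})+\mu(\set{x\le -R})$, this produces the estimate
\[
2C \;\ge\; 2E(\mu) \;\ge\; \bra*{\tfrac12 W(R)-c_0}\,p_R - c_0 \, .
\]
Choosing $R$ large enough that $\tfrac12 W(R)>c_0$ and rearranging gives $p_R\le (2C+c_0)\big/\bra*{\tfrac12 W(R)-c_0}$, a bound that is independent of $\mu$ and tends to $0$ as $R\to\infty$. This is exactly uniform tightness of $L_{\le C}(E)$, and Prokhorov's theorem then delivers relative compactness in $\cP(\R)$.

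The main obstacle, and precisely the reason the symmetry hypothesis is needed, is that $W$ changes sign: it is negative on a neighbourhood of the origin, so one cannot read off a confining bound from $E(\mu)$ as one could for a monotone nonnegative kernel. The symmetry of $\mu$ circumvents this, since any point $x$ with $\abs{x}$ large is forced to see at least half of the mass of $\mu$ at distance $\ge\abs{x}$ on the opposite side of the origin, where $W$ is large and positive; this is the one-dimensional analogue of the radial compactness argument carried out in~\cref{sec:aggdiff}. The remaining points to check are routine: the uniform lower bound and the monotonicity of $W$ on $[1,\infty)$, together with the elementary half-mass identities for symmetric measures.
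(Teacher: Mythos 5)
Your proof is correct and follows essentially the same route as the paper's: both rely on the uniform lower bound $W\ge -5/32$, the monotonicity of $W$ in $|x|$, and the fact that a symmetric measure places at least half its mass on the half-line opposite to any point with $|x|\ge R$, where $W(x-y)\ge W(R)\to\infty$, and then conclude tightness and apply Prokhorov's theorem. The only difference is bookkeeping: the paper sandwiches a reduced energy restricted to $\{|x-y|\ge 1\}$ between an upper bound $C+5/64$ and the lower bound coming from symmetry, whereas you split the double integral directly according to the location of $x$.
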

\begin{proof}
The strategy for this proof is similar to~\cref{prop:exunminRd} borrowing methods from~\cite[Theorem 2.1]{CCV15} or~\cite{McC94}. We repeatedly make use of the fact that $W$ is an increasing function of $|x|$. We define first the reduced energy
\[
E_1(\mu) := \frac{1}{2}\int_{-\infty}^\infty \int_{|x-y|\ge 1}W(x-y) \mu(\dx{x})\mu(\dx{y}).
\]
If $E(\mu) \le C$ for $C\in \R$, then we can also upper bound $E_1(\mu)$ using $W\ge -\frac{5}{32}$
\begin{align}
E_1(\mu) &= E(\mu) - \frac{1}{2}\int_{-\infty}^\infty \int_{|x-y|< 1}W(x-y) \mu(\dx{x})\mu(\dx{y}) \le C + \frac{5}{64}\int_{-\infty}^\infty \int_{|x-y| < 1}\mu(\dx{x})\mu(\dx{y})  \\
&\le C + \frac{5}{64}.
\end{align}
For fixed $R\ge 1$ and $x\in \R$ such that $|x| \ge 1$, we have
\[
\{y \in \R \, : \, xy \le 0\} \subset \{y \in \R \, : \, |x-y| \ge 1\}.
\]
This provides a lower bound for $E_1(\mu)$ by
\[
E_1(\mu) \ge \int_{|x| > R}\int_{xy\le 0}W(x-y) \mu(\dx{x})\mu(\dx{y}).
\]
In fact, $xy \le 0$ implies the better estimate $|x-y| \ge |x|$ which minorises $W$
\begin{align}
E_1(\mu) &\ge \int_{|x|>R}\int_{xy\le 0} W(x) \mu(\dx{x})\mu(\dx{y})  \ge W(R)\int_{|x|> R}\int_{xy \le 0}\mu(\dx{x})\mu(\dx{y}) = \frac{W(R)}{2}\int_{|x|>R}\mu(\dx{x}).
\end{align}
The last equality uses the symmetry of $\mu$; $\{y \in \R \, : \, xy\le 0\}$ is one of $(-\infty, 0]$ or $[0,+\infty)$, both of which have $\mu$-mass $1/2$. The lower and upper bounds of $E_1(\mu)$ just computed give the estimate
\[
\sup_{\mu \in L_{\le C}(E)}\int_{|x|> R}\mu(\dx{x}) \le \frac{2(C+\frac{5}{64})}{W(R)} \overset{R\to \infty}{\to} 0.
\]
This establishes compactness by Prokhorov's theorem.
\end{proof}
~\cref{lem:nonuniqcpct} verifies the remaining assumption in our abstract result.
\begin{proposition}
Let $\mu \in AC([0,\infty);\cP_2(\R))$ be a 2-curve of maximal slope of the energy $E$ with respect to the weak upper gradient $G$, for initial data $\mu_0\in Z_E \cap \cY$. Then,
\[
\lim_{t\to \infty} d_{LP}(\mu(t), \mathcal{E}_w) = 0,
\]
where $\mathcal{E}_w \subset \cP(\R)$ is the set of weak stationary states of the 2-curves of maximal slope associated to $E$ and $G$.
\end{proposition}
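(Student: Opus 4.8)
The plan is to deduce the statement as a direct application of~\cref{thm:converge'} to the extensions $\tilde{E},\tilde{G}$ on $\cP(\R)$, so the task reduces to verifying its hypotheses together with the existence of a curve of maximal slope from every admissible datum. As already noted before~\cref{lem:nonuniqcpct}, the explicit forms of $E$ and $G$ make two of the three assumptions immediate: $\tilde{E}$ is proper, bounded below (since $W\ge -5/32$ gives $E\ge -5/64$), and l.s.c., which is~\eqref{ass1'}, while $\tilde{G}$ is of the form~\eqref{eq:LSC} and hence l.s.c. on $\cP(\R)$ by~\cref{thm:LSC}, which is~\eqref{ass3'}. For existence I would invoke the general theory: even though $W$ is not $\lambda$-convex --- the very feature that permits non-uniqueness in this example --- curves of maximal slope still exist from every $\rho_0\in Z_E$ by~\cite[Theorems 2.3.1 and 2.3.3]{AGS08}.

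The substantive point is the compactness assumption~\eqref{ass2'subset}, which I would establish in two stages. First, taking $\cY$ to be the set of symmetric probability measures on $\R$, I would record that the trajectory never leaves $\cY$: because $\mu_0$ is symmetric and $W$ is even, the reflection $x\mapsto -x$ is a symmetry of the entire variational structure (it preserves $E$ and $G$ and, being an isometry of $\R$, the metric derivative), so the curve of maximal slope stays symmetric for all times and $\bigcup_{t\ge 0}\{\mu(t)\}\subseteq\cY$. Second, I would quote~\cref{lem:nonuniqcpct}, which asserts precisely that $L_{\le C}(E)=\{\mu\in\cP_2(\R)\cap\cY:E(\mu)\le C\}$ is relatively compact in $\cP(\R)$ for every $C\in\R$. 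These two facts together are exactly~\eqref{ass2'subset}; note that since we only seek the unrefined conclusion $d_{LP}(\mu(t),\cE_w)\to 0$, no closedness of $\cY$ is required. With~\eqref{ass1'},~\eqref{ass2'subset}, and~\eqref{ass3'} in place and existence guaranteed,~\cref{thm:converge'} then applies verbatim and delivers $\lim_{t\to\infty}d_{LP}(\mu(t),\cE_w)=0$.

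The main obstacle is concentrated in~\cref{lem:nonuniqcpct} rather than in the abstract reduction. The lower bound $W\ge -5/32$ controls only the near-diagonal contribution to the energy, and the growth of $W$ at infinity does not by itself confine mass; the tightness estimate $\int_{|x|>R}\mu(\dx{x})\le 2\bigl(C+5/64\bigr)/W(R)$ genuinely uses the symmetry of $\mu$, through the fact that each half-line $\{y:xy\le 0\}$ carries exactly half the mass, to turn the interaction energy into a quantitative tail bound. This is why one must restrict to the invariant class $\cY$ and verify~\eqref{ass2'subset} there, instead of attempting~\eqref{ass2'} on all of $\cP_2(\R)$; it is the sole step where the special structure of the example, not the general machinery, carries the argument.
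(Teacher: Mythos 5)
Your proposal is correct and follows essentially the same route as the paper: both reduce the statement to an application of~\cref{thm:converge'}, verifying~\eqref{ass1'} and~\eqref{ass3'} directly from the forms of $E$ and $G$ (the latter via~\cref{thm:LSC}) and obtaining~\eqref{ass2'subset} from~\cref{lem:nonuniqcpct} together with the invariance of the symmetric class $\cY$ under the flow. Your write-up merely makes explicit a few points the paper leaves to the surrounding discussion (the lower bound $E\ge -5/64$, existence via~\cite[Theorems 2.3.1 and 2.3.3]{AGS08}, and the propagation of symmetry), so there is nothing substantively different to compare.
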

\begin{proof}
This is an application of~\cref{thm:converge'} since~\cref{lem:nonuniqcpct} is precisely~\eqref{ass2'subset}. The other assumptions~\eqref{ass1'} and~\eqref{ass3'} are easily verified based on the forms of $E$ and $G$. 
\end{proof}
We emphasise here that our abstract convergence result is between a curve and a set; different curves of maximal slope could converge to disjoint subsets of the set of stationary states. We illustrate this in the proof of ~\cref{prop:nonuniq} below.
\begin{proposition}
\label{prop:nonuniq}
Let $\mu_0= \dfrac{1}{2}(\delta_{\frac{1}{2}}+\delta_{-\frac{1}{2}}) \in \cP_2(\R)$. Then, there exist two distinct curves $\rho,\eta \in AC([0,\infty); \cP_2(\R))$ with $\rho(0)=\eta(0)= \mu_0$ such that $\rho$ and $\eta$ are both curves of maximal slope of the energy $E$ with respect to the upper gradient $G$. Furthermore,
\begin{align}
\lim_{t \to \infty}d_{LP}(\rho(t),\mu_0)
=0 \qquad \textrm{and} \qquad \lim_{t \to \infty}d_{LP}(\eta(t),\delta_0)
=0 \,. 
\end{align}
\end{proposition}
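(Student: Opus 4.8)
The plan is to reduce everything to a two-particle ansatz and to realise the non-uniqueness as Peano-type branching in a single scalar ODE. Since $W$ is even and $\mu_0$ is symmetric, I look for both curves in the form $\nu(t)=\tfrac12(\delta_{a(t)}+\delta_{-a(t)})$ with $a(0)=\tfrac12$ and $a(t)\ge 0$. For such a measure one computes directly $E(\nu)=\tfrac14\bra*{W(0)+W(2a)}$, and, because the two atoms stay on opposite sides while $a>0$, the optimal coupling gives $W_2(\nu(t),\nu(s))=\abs{a(t)-a(s)}$, so that $\abs{\nu'}(t)=\abs{\dot a(t)}$. Evaluating the force at an atom, $(\partial_x W*\nu)(\pm a)=\pm\tfrac12 W'(2a)$ (using $W'(0)=0$), yields $G^2(\nu)=\tfrac14 W'(2a)^2$. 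With these three explicit quantities, the scalar dynamics governing a two-particle curve of maximal slope is $\dot a=-\tfrac12 W'(2a)$, $a(0)=\tfrac12$.

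\textbf{The stationary branch.} First I would record that $W'(1)=0$ (the derivative of $W$ vanishes at $\abs{x}=1$ from both sides), so the force at each atom of $\mu_0$ is $\tfrac12 W'(1)=0$ and hence $G(\mu_0)=0$. Since $E(\mu_0)<\infty$, Lemma~\ref{lemma:equil} shows that the constant curve $\rho(t):=\mu_0$ is a curve of maximal slope, and trivially $d_{LP}(\rho(t),\mu_0)=0$ for all $t$. This is precisely the trivial branch $a\equiv\tfrac12$ of the ODE above.

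\textbf{The collapsing branch.} The key observation is that near $\abs{x}=1$ the potential obeys $W'(x)=\tfrac32\abs{x-1}^{1/2}$, so for $a<\tfrac12$ the ODE reads $\dot a=-\tfrac34(1-2a)^{1/2}$; writing $b=\tfrac12-a$ this becomes $\dot b=\tfrac{3\sqrt2}{4}\,b^{1/2}$, $b(0)=0$, the classical Peano equation. Besides $b\equiv0$ it admits the non-trivial solution $b(t)=\tfrac{9}{32}t^2$ that leaves the origin, and I would take $\eta(t)$ to be the two-particle curve associated with $a(t)=\tfrac12-b(t)$. Once $a$ drops to $\tfrac38$ (where $2a=\tfrac34$) the dynamics enters the quadratic regime $W'(x)=x$, giving $\dot a=-a$ and hence $a(t)\to0$ monotonically. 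Consequently $W_2(\eta(t),\delta_0)^2=a(t)^2\to0$, so $d_{LP}(\eta(t),\delta_0)\to0$, while $\rho\equiv\mu_0$, producing two distinct curves from the same datum with the two announced limits.

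\textbf{Verifying maximal slope and the main obstacle.} It remains to check that $\eta$ is genuinely a curve of maximal slope, which is the crux. Using the explicit formulas, $\tfrac{d}{dt}E(\eta)=\tfrac12 W'(2a)\dot a$, and substituting the characteristic ODE gives $\tfrac{d}{dt}E(\eta)=-\tfrac14 W'(2a)^2=-\tfrac12\abs{\eta'}^2-\tfrac12 G^2(\eta)$, which is exactly~\eqref{eq:maxslope} for $p=q=2$; in particular $\abs{\tfrac{d}{dt}E(\eta)}=G(\eta)\abs{\eta'}$, so $G$ is realised as an upper gradient along $\eta$ and $E\circ\eta$ is non-increasing. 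Since $\abs{\dot a}=\tfrac12\abs{W'(2a)}$ is bounded for $2a\le1$, the map $a(t)$ is Lipschitz and $\eta\in AC([0,\infty);\cP_2(\R))$. The main obstacle is precisely this verification at the level of the \emph{metric} notion of solution: one must confirm that the Wasserstein metric derivative of the ansatz equals $\abs{\dot a}$, that the candidate $G$ genuinely serves as a weak upper gradient along both branches, and that the maximal-slope identity survives the non-Lipschitz point $a=\tfrac12$ and the regime change at $a=\tfrac38$. All of these reduce to the scalar computations above, so the conceptual content is entirely that the $3/2$-power singularity of $W$ (exactly the failure of $\lambda$-convexity) produces Peano branching for the characteristic ODE.
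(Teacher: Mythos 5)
Your proposal is correct and follows essentially the same route as the paper: the paper also reduces to a two-particle ansatz, observes that the constant curve is stationary since $G(\mu_0)=0$, extracts the non-trivial branch from the non-Lipschitz scalar ODE for the particle separation (obtaining the same explicit solution $1-\tfrac{9}{16}t^2$ followed by exponential decay after $t=2/3$), and verifies the maximal-slope identity by the same three explicit computations of $\tfrac{d}{dt}E$, $G^2$, and the metric derivative. Your symmetric parametrisation by the half-separation $a(t)$ and the "Peano branching" framing are only cosmetic variations on the paper's change of variables $x_3=x_2-x_1$.
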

\begin{proof}
We first compute the gradient of $W$ as follows
\begin{align}
\label{eq:gradW}
W'(x)=
\begin{cases}
\textrm{sign}(x)\dfrac{3}{2}\abs*{\abs{x}-1}^{1/2} & \abs{x}>\frac{3}{4} \\
 x & \abs{x}\leq \frac{3}{4}
\end{cases}
\, .
\end{align}
Note now that $\rho \equiv \mu_0$ is a curve of maximal slope of $E$ with respect to $G$. Indeed, we can check that
$G(\rho)=G(\mu_0)=0$. Thus, by~\cref{lemma:equil}, we have that $\rho$ is a (stationary) curve of maximal slope. To construct $\eta$ we first analyse the following ODEs
\begin{align}
\begin{cases}
\dfrac{\dx{x_1}}{\dx{t}}&= - \dfrac{1}{2} W'(x_1 -x_2) \\
&\\
\dfrac{\dx{x_2}}{\dx{t}}&= - \dfrac{1}{2}W'(x_2 -x_1)
\end{cases}
\, , \label{eq:ODE}
\end{align}
with $x_1(0)=-1/2,x_2(0)=1/2$. We define $x_3=x_2-x_1$ and $x_4=x_1$, which in turn satisfy the following system of ODEs
\begin{align}
\begin{cases}
\dfrac{\dx{x_3}}{\dx{t}}&= -W'(x_3) \\
&\\
\dfrac{\dx{x_4}}{\dx{t}}&=  \dfrac{1}{2}W'(x_3)
\end{cases}
\, ,
\end{align}
with $x_3(0)=1,x_4(0)=-1/2$. Thus,if we solve for $x_3$, $x_4$ (and by extension $x_1$ and $x_2$) can be obtained easily. Note that
\begin{align}
x_3(t)=
\begin{cases}
1- \frac{9}{16}t^2 & 0 \leq t \leq 2/3 \\
\frac{3}{4}e^{-(t-2/3)} & 2/3<t<\infty 
\end{cases}
\end{align}
is a solution to the ODE. Further calculations tell us that
\begin{align}
x_1(t)&=
\begin{cases}
-1/2+ \frac{9}{32}t^2 & 0 \leq t \leq 2/3 \\
-\frac{3}{8}e^{-(t-2/3)} & 2/3<t<\infty 
\end{cases} \\
x_2(t)&=
\begin{cases}
1/2- \frac{9}{32}t^2 & 0 \leq t \leq 2/3 \\
\frac{3}{8}e^{-(t-2/3)} & 2/3<t<\infty 
\end{cases}
\end{align}
are solutions of~\eqref{eq:ODE}. Define the curve
\begin{align}
\eta:= \frac{1}{2}\bra*{\delta_{x_1(t)} +\delta_{x_2(t)}} \, .
\end{align}
We will now argue that $\eta \in AC([0,\infty);\cP_2(\R))$ is a curve of maximal slope. We have
\begin{align}
\frac{\dx{}}{\dx{t}}E(\eta) &= \frac{1}{4}\frac{\dx{}}{\dx{t}}W(x_2(t)-x_1(t)) = \frac{1}{4}\frac{\dx{}}{\dx{t}}W(x_3(t))= -\frac{1}{4}\abs{W'}^2(x_3)  \, ,
\end{align}
where for the last equality we have simply used the fact that $x_3$ is the Euclidean gradient flow of $W$.
Additionally,
\begin{align}
\frac{1}{2}G^2(\eta)= \frac{1}{2}\bra*{\frac{1}{8}\abs{ W'}^2(x_2(t)-x_1(t)) + \frac{1}{8}\abs{ W'}^2(x_1(t)-x_2(t))} =\frac{1}{8}\abs{W'}^2(x_3(t)) \, .
\end{align}
Finally, we are left to compute the metric derivative of $\eta$. 
\begin{align}
d_2^2(\eta(t),\eta(s)) = \frac{1}{2}\bra*{\abs{x_2(t)-x_2(s)}^2 + \abs{x_1(t)-x_1(s)}^2 } \, ,
\end{align}
for $0 \leq s \leq t <\infty$ and $\abs{t-s} \ll 1$. The above equality follows by explicitly constructing the transport map and can be simplified using symmetry to
\begin{align}
d_2^2(\eta(t),\eta(s)) = \abs{x_2(t)-x_2(s)}^2 \, .
\end{align}
Dividing by $\abs{t-s}$ and passing to the limit as $t \to s$ we obtain
\begin{align}
\lim_{t \to s}\frac{d_2(\eta(t),\eta(s))}{\abs{t-s}}= \lim_{t \to s}\frac{\abs{x_2(t)-x_2(s)}}{\abs{t-s}} =\frac{1}{2}\abs{W'}(x_3(s)) \, ,
\end{align}
from which we obtain that
\begin{align}
d_2(\eta(t),\eta(s)) = \int_s^t \frac{1}{2}\abs{ W'}(x_3(u)) \dx{u} \, .
\end{align}
Thus,$\frac{1}{2}\abs{W'}(x_3(t))$ is necessarily the metric derivative of $\eta$ and
\begin{align}
\frac{1}{2}\abs{\eta'}^2(t)= \frac{1}{8}\abs{W'}^2(x_3(t)) \, .
\end{align}
Thus,it holds that
\begin{align}
\frac{\dx{}}{\dx{t}}E(\eta(t))=-\frac{1}{2}\abs{\eta'}^2(t) - \frac{1}{2}G^2(\eta(t)) \, ,
\end{align}
from which the result follows.
\end{proof}

\section*{Acknowledgements}
	JAC was supported the Advanced Grant Nonlocal-CPD (Nonlocal PDEs for Complex Particle Dynamics: 	Phase Transitions, Patterns and Synchronization) of the European Research Council Executive Agency (ERC) under the European Union's Horizon 2020 research and innovation programme (grant agreement No. 883363). JAC and RG were also supported by EPSRC grant number EP/P031587/1. RG and JW were supported by the President's PhD Scholarship Award from Imperial College.

\appendix

\section{Dynamical systems}
\label{sec:dynsys}
In this section, we discuss some preliminaries from the theory of the continuous-time metric dynamical systems. We follow the discussion in~\cite[Chapter 9]{CH98}. We make slight modifications to deal with the fact that trajectories of the dynamical systems we consider are not necessarily unique. Let $(Z,d)$ be a complete metric space. We start with the definition of a metric dynamical system:
\begin{defn}[Metric dynamical system]
\label{def:mds}
A \emph{metric dynamical system}  on $Z$ is a family of mappings $\set{S_t}_{t \geq 0}$  on $Z$ such that
\begin{enumerate}
\item  $S_t: Z \to 2^Z$ for all $t \geq 0$; \label{metdyn1}
\item $S_0= I$ where $I: Z \to 2^Z, z \mapsto \set{z}$; \label{metdyn2}
\item For all $z_0 \in Z$ there exists a family of trajectories $ \bra{t \mapsto z_j(t)} \in C([0,\infty); Z), j \in J$ for some, possibly uncountable, index set $J$, such that $S_t z= \bigcup_{j \in J} \set{z_j(t)}$. \label{metdyn3}
\end{enumerate}
 A \emph{uniquely-defined metric dynamical system} is a continuous selection of the  family of mappings $\set{S_t}_{t \geq 0}$  on $Z$ such that
 \begin{enumerate}
\item  $S_t \in C(Z;Z)$ for all $t \geq 0$;
\item $S_0= I$ ;
\item $S_{t+s}= S_t \circ S_s$ for all $s,t \geq 0$;
\item For all $z_0 \in Z$ there exists a trajectory $ \bra{t \mapsto z(t)} \in C([0,\infty); Z)$  such that $S_t z_0=  z(t)$.
\end{enumerate}
\end{defn}
We now introduce the concept of the $\omega$-limit set associated to a point $z \in Z$.
\begin{defn}[$\omega$-limit set]
Let $\set{S_t}_{t \geq 0}$ be a metric dynamical system and $z_0 \in Z$. Then,  the set
\begin{align}
\omega^j(z_0):= \set*{z^* \in Z: \exists t_n \to \infty, \lim_{n \to \infty} d(z_j\bra{t_n} ,z^*)=0}
\end{align}
is called the \emph{$\omega$-limit set of $z_0$ subordinate to $j \in J$}. If the dynamical system $\set{S_t}_{t \geq 0}$ is uniquely-defined, then we call
\begin{align}
\omega(z_0):= \set*{z^* \in Z: \exists t_n \to \infty, \lim_{n \to \infty} d(z\bra{t_n} ,z^*)=0} \, ,
\end{align}
the \emph{$\omega$-limit set of $z_0$}.
\label{def:wls}
\end{defn}
We then have the following result:
\begin{theorem}\label{thm:ch}
Let $\set{S_t}_{t \geq 0}$ be a metric dynamical system and $z_0 \in Z$. Assume that $\bigcup_{t \geq 0} \set{z_j(t)}$ is relatively compact in $Z$ for some $j \in J$. Then,
\begin{tenumerate}
\item $\omega^j(z_0)$ is non-empty, compact, and connected. \label{thm:ch!a}
\item $\lim_{t \to \infty} d(z_j(t),\omega^j(z_0))=0 $, where 
$$
d(z_j(t),\omega^j(z_0)):=\inf_{z^* \in \omega^j(z_0)}d(z_j(t),z^*)=\min_{z^* \in \omega^j(z_0)}d(z_j(t),z^*) \, . \label{thm:ch!b}
$$
\end{tenumerate}
Assume $\set{S_t}_{t \geq 0}$ is uniquely-defined and $\bigcup_{t \geq 0} \set{z(t)}=\bigcup_{t \geq 0} \set{S_t z_0}$ is relatively compact. Then, the same results hold for $\omega(z_0)$.
\end{theorem}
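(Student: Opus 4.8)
The plan is to exploit the relative compactness hypothesis, which guarantees that along \emph{any} time-divergent sequence $t_n \to \infty$ the points $z_j(t_n)$ admit a convergent subsequence in $Z$, and every limit obtained this way lies in $\omega^j(z_0)$ by definition. Nonemptiness of $\omega^j(z_0)$ is then immediate. For compactness I would first record the characterisation
\[
\omega^j(z_0) = \bigcap_{T \geq 0}\ov{\set{z_j(t) : t \geq T}},
\]
which follows directly from unwinding the definition: a point is an $\omega$-limit exactly when it is approximated by trajectory points at arbitrarily large times. As an intersection of closed sets $\omega^j(z_0)$ is closed, and since it is contained in the compact set $\ov{\bigcup_{t\geq 0}\set{z_j(t)}}$, it is compact.

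The connectedness of $\omega^j(z_0)$ is the heart of the argument and the step I expect to be the main obstacle. I would argue by contradiction: if $\omega^j(z_0)$ were disconnected, then by compactness it splits as a disjoint union $A \sqcup B$ of two nonempty compact sets with $\dist(A,B) =: \delta > 0$. The key structural input is that the trajectory $t \mapsto z_j(t)$ is continuous (it lies in $C([0,\infty);Z)$), hence so are $t \mapsto d(z_j(t),A)$ and $t \mapsto d(z_j(t),B)$. Since both $A$ and $B$ contain $\omega$-limit points, the trajectory comes arbitrarily close to $A$ along some times $a_n \to \infty$ and to $B$ along some times $b_n \to \infty$; consequently the continuous function $t \mapsto d(z_j(t),A) - d(z_j(t),B)$ changes sign for arbitrarily large times. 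By the intermediate value theorem I extract times $s_n \to \infty$ with $d(z_j(s_n),A) = d(z_j(s_n),B)$, a common value the triangle inequality forces to be at least $\delta/2$. A further convergent subsequence $z_j(s_{n_k}) \to z^*$ then produces a point $z^* \in \omega^j(z_0) = A \cup B$ lying at distance $\geq \delta/2$ from both $A$ and $B$, which is a contradiction.

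Finally, for the convergence statement I would again argue by contradiction: were $d(z_j(t),\omega^j(z_0))$ \emph{not} to tend to $0$, I could find $\eps > 0$ and $t_n \to \infty$ with $d(z_j(t_n),\omega^j(z_0)) \geq \eps$; extracting a convergent subsequence $z_j(t_{n_k}) \to z^*$ places $z^*$ in $\omega^j(z_0)$, yet continuity of the distance-to-set function forces $d(z_j(t_{n_k}),\omega^j(z_0)) \to d(z^*,\omega^j(z_0)) = 0$, a contradiction. Compactness of $\omega^j(z_0)$ ensures the infimum defining the distance is attained, giving the $\min$ asserted in the statement. The uniquely-defined case is verbatim the same argument with the single trajectory $z(t) = S_t z_0$ in place of $z_j(t)$, so it requires no separate treatment.
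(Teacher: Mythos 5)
Your proposal is correct. For non-emptiness, compactness, and the convergence statement \rdref{thm:ch!b} it follows essentially the same route as the paper: extract convergent subsequences from the relatively compact trajectory, write $\omega^j(z_0)$ as the intersection $\bigcap_{s>0}\ov{\bigcup_{t\geq s}\set{z_j(t)}}$ of closed tails to get compactness, and prove \rdref{thm:ch!b} by the same contradiction argument. The one place where you genuinely diverge is connectedness. The paper disposes of it in one line by observing that each tail closure $\ov{\bigcup_{t\geq s}\set{z_j(t)}}$ is a continuum (the closure of a continuous image of $[s,\infty)$) and invoking the standard fact that a decreasing intersection of compact connected sets in a Hausdorff space is again compact and connected. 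You instead give the classical direct argument: split a putative disconnection $\omega^j(z_0)=A\sqcup B$ with $\dist(A,B)=\delta>0$, use continuity of $t\mapsto d(z_j(t),A)-d(z_j(t),B)$ together with the intermediate value theorem to manufacture times $s_n\to\infty$ at which the trajectory is at distance at least $\delta/2$ from both pieces, and derive a contradiction from a limit point of $z_j(s_n)$. Both arguments are valid; the paper's is terser but leans on a nontrivial point-set topology lemma, while yours is self-contained and makes explicit where the continuity of the trajectory (condition~\rdref{metdyn3}) enters — arguably a more transparent proof of the only genuinely dynamical part of the statement.
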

\begin{proof}
We note that the proof of~\Dref{thm:ch!a} follows from the fact that we can find subsequence $(z_i\bra{t_n})_{n \in \N}$ and $z^* \in Z$ such that 
$$
\lim_{n \to \infty} d(z_j\bra{t_n},z^*)=0 \ .
$$
thus, $z^* \in \omega^j(z_0)$ and $\omega^j(z_0) \neq \emptyset$. Furthermore, by definition, we have that
\begin{align}
\omega^j(z_0)= \bigcap_{s>0}\overline{\bigcup_{t \geq s} \set{z_j\bra{t}}} \, .
\end{align}
$\omega^j(z_0)$ is a decreasing intersection of compact and connected sets and thus is compact and connected itself.

For the proof of~\Dref{thm:ch!b}, we assume by contradiction that we can find a sequence $t_n \to \infty$ such that 
$$
\liminf_{n \to \infty} d(z_j\bra{t_n},\omega^j(z_0)) \geq \delta \, .
$$
By compactness, there exists a subsequence $t_{n_k} \to \infty$ and some $z^*$ such that
$$
\lim_{k \to \infty} d(z_j\bra{t_{n_k}},z^*) =0 \, .
$$
Clearly, $z^* \in \omega^j(z_0)$ which is absurd. Thus,~\ref{thm:ch!b} follows.
\end{proof}
\section{Lower semicontinuity of weak upper gradients}
\label{sec:lsc}
In this section we include a short justification on a general proof strategy for deducing lower semicontinuity of the weak upper gradients that share certain structural features. The abstract setting is as follows: $X$ is $\R^d$ or $\T^d$ space and $G: \cP(X) \to (-\infty,+\infty]$ is a proper function with domain
\begin{align}
D(G)=\set{\rho \in \cP(X): G(\rho) < + \infty} \, .
\end{align}
We assume that $G$ has the following form:
\begin{align}
G(\rho):=\bra*{\int_{X} \abs{v_{\rho}(x)}^p \dx{\rho}(x)}^{\frac{1}{p}} \, , \label{eq:LSC} \tag{LSC}
\end{align}
for some $1<p < \infty$ and some Borel measurable $v_\rho: X \to \R^d$ dependent on $\rho$. We then have the following result:
\begin{theorem}
\label{thm:LSC}
The function $G: \cP(X) \to (-\infty,+\infty]$ is l.s.c.
\end{theorem}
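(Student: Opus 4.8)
The plan is to derive the lower semicontinuity of $G$ from a convex-duality representation of the integral functional $G^p$, which exhibits it as a supremum of narrowly continuous functionals; since a pointwise supremum of continuous functions is lower semicontinuous and $t \mapsto t^{1/p}$ is continuous and increasing on $[0,+\infty)$, the conclusion for $G = (G^p)^{1/p}$ follows immediately.

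First I would record the pointwise Legendre duality: for every $v \in \R^d$ and $q = p/(p-1)$,
\[
\abs{v}^p = \sup_{\xi \in \R^d}\bra{p\, \xi\cdot v - (p-1)\abs{\xi}^q},
\]
with the supremum attained at $\xi = \abs{v}^{p-2}v$ (a direct consequence of Young's inequality). Integrating against $\rho$ and invoking a standard density argument (continuous compactly supported fields are dense in $L^q(\rho)$, and $v_\rho \in L^p(\rho)$ on the domain of $G$) upgrades this to the integral identity
\[
G(\rho)^p = \int_X \abs{v_\rho}^p \dx{\rho} = \sup_{\xi \in C_c(X;\R^d)} \int_X \bra{p\, \xi\cdot v_\rho - (p-1)\abs{\xi}^q}\dx{\rho}.
\]

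The crucial step is then to recognise that each summand on the right is continuous with respect to the narrow topology. The term $\int_X \abs{\xi}^q \dx{\rho}$ is narrowly continuous because $\abs{\xi}^q \in C_b(X)$. For the term $\int_X \xi\cdot v_\rho \dx{\rho}$, I would use the defining structure of $v_\rho$: in every instance of interest the distributional velocity $\rho v_\rho$ has the form $\nabla\rho + \rho\, b_\rho$, so an integration by parts gives $\int_X \xi\cdot v_\rho \dx{\rho} = \int_X \bra{-\nabla\cdot\xi + \xi\cdot b_\rho}\dx{\rho} =: \Lambda_\xi(\rho)$, where $b_\rho$ is the drift ($\nabla V$, $\nabla(W*\rho)$, and so on). The functional $\Lambda_\xi$ is defined for \emph{all} $\rho \in \cP(X)$ and is narrowly continuous: affine and continuous when $b_\rho$ is independent of $\rho$, and still continuous when $b_\rho$ depends on $\rho$ through a bounded continuous kernel, as for the interaction contribution $\iint \xi(x)\cdot W'(x-y)\dx{\rho}(y)\dx{\rho}(x)$. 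Rewriting the representation as $G(\rho)^p = \sup_\xi \bra{p\,\Lambda_\xi(\rho) - (p-1)\int_X \abs{\xi}^q \dx{\rho}}$ then displays $G^p$ as a supremum of narrowly continuous functionals, hence narrowly lower semicontinuous, and taking $p$-th roots concludes.

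The main obstacle is the identification of $\int_X \xi\cdot v_\rho \dx{\rho}$ with the narrowly continuous functional $\Lambda_\xi(\rho)$ for \emph{every} $\rho$, including the verification that the supremum correctly returns $+\infty$ off the domain of $G$ (for instance when $\rho$ is not absolutely continuous, so that $\nabla\rho$ carries a singular part and no finite velocity $v_\rho$ exists). This is precisely the perspective-function phenomenon: the integrand $(s,w) \mapsto \abs{w}^p/s^{p-1}$ is jointly convex, lower semicontinuous and positively one-homogeneous, so the dual representation automatically encodes the absolute-continuity constraint $\rho v_\rho \ll \rho$. An essentially equivalent route would bypass the explicit duality and apply a Reshetnyak-type lower semicontinuity theorem directly to the pair $(\rho_n, \rho_n v_{\rho_n})$: along a sequence with $\liminf G(\rho_n) < \infty$ one bounds the momenta $\rho_n v_{\rho_n}$ in total variation by H\"older's inequality, extracts a weak-$*$ limit, identifies it with $\rho v_\rho$ using the same integration-by-parts continuity, and closes the argument by joint convexity. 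Either way, the structural continuity of the momentum pairing $\rho \mapsto \Lambda_\xi(\rho)$ is the one genuinely nontrivial ingredient, the remainder being standard convex analysis.
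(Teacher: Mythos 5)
Your argument is correct in outline but follows a genuinely different route from the paper. The paper's proof is short and citation-driven: it first reduces to subsequences along which $G(\rho_n)$ stays bounded (via an explicit characterisation of the liminf as an infimum over bounded subsequences), and then invokes \cite[Theorem 5.4.4]{AGS08}, which supplies compactness of the momenta $v_{\rho_n}\rho_n$ and joint lower semicontinuity of $(\mu,v)\mapsto\norm{v}_{\Leb^p(\mu)}$ under narrow convergence. Your duality representation of $G^p$ as a supremum of narrowly continuous functionals is, in essence, an unpacking of the proof of that cited theorem (whose mechanism is exactly the convexity and one-homogeneity of the perspective integrand $(s,w)\mapsto\abs{w}^p/s^{p-1}$), so the two arguments rest on the same convex-analytic core. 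What your version buys is self-containedness, the automatic handling of sequences with $G(\rho_n)\to+\infty$ (so the paper's preliminary reduction to bounded subsequences becomes unnecessary), and, importantly, an explicit treatment of the one point the paper's citation glosses over: \cite[Theorem 5.4.4]{AGS08} only produces \emph{some} limit field $v\in\Leb^p(\rho)$ with $v_{\rho_n}\rho_n\weakstar v\rho$, and identifying $v$ with $v_\rho$ (equivalently, showing that your pairing $\Lambda_\xi$ is well defined and narrowly continuous on all of $\cP(X)$) requires precisely the structural information $\rho v_\rho=\nabla\rho+\rho b_\rho$ that you isolate as the genuinely nontrivial ingredient. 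What the paper's version buys is brevity and uniformity: by treating $v_\rho$ abstractly it states the lemma once for all the examples of Section~\ref{sec:examples}, at the cost of leaving the identification step implicit. Indeed, as literally stated the theorem cannot hold for a completely arbitrary assignment $\rho\mapsto v_\rho$, so a hypothesis of the kind you use is unavoidable; your write-up makes this visible where the paper's does not.
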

\begin{proof}
We need to check that $G$ is l.s.c with respect to weak convergence of probability measures, i.e. given $ \rho_n \to \rho $ in $\cP(X)$ it holds that
\begin{align}
G(\rho) \leq \liminf_{n \to \infty} G(\rho_n) \, . \label{eq:Gliminf}
\end{align}
Assume first that $\rho_n$ has no subsequence $\rho_{n_k}$ such that $\sup\limits_{n_k \in \N} G(\rho_{n_k}) < \infty$. Then, we have that
\begin{align}
\liminf_{n \to \infty}G(\rho_n) = +\infty \, ,
\end{align} 
and thus~\eqref{eq:Gliminf} follows trivially. Indeed, assume this is not the case, i.e.
 $$
 \liminf_{n \to \infty}G(\rho_n)< C <\infty \,.
 $$
  Then, for some $\eps>0$ we can find an $N_0 <\infty$, such that for all $N \geq N_0$ it holds that
\begin{align}
\inf_{N \geq N_0} G(\rho_n) \leq C+ \eps/2 \, .
\end{align}
Thus, there exists some $n_1 \geq N$ such that $G(\rho_{n_1}) \leq C+ \eps$. Now setting $N_1= n_1+1$, we have that
\begin{align}
\inf_{N \geq N_1} G(\rho_n) \leq C+ \eps/2 \, .
\end{align}
Thus, there exists some $n_2 >n_1$ such that $G(\rho_{n_2}) \leq C+ \eps$. Proceeding like this, we can construct a subsequence $\rho_{n_k}$ such that $G(\rho_{n_k}) \leq C+ \eps $, which provides us with a contradiction. Thus, it is sufficient to check lower semicontinuity of $G$ along sequences with at least one bounded subsequence. In fact, we have that
\begin{align}
\liminf_{n \to \infty} G(\rho_n) = \inf \set*{\liminf_{n \to \infty} G(\rho_{n_k}): \sup_{n_k \in \N} G(\rho_{n_k}) < \infty }
\label{eq:liminfcharacterisation}
\end{align} 
Indeed, we have the bound
\begin{align}
\liminf_{n \to \infty} G(\rho_n) = \lim_{N \infty} \inf_{n \geq N} G(\rho_n) \leq \lim_{N \to \infty}\inf_{n_k \geq N} G(\rho_{n_k}) \, ,
\end{align}
for an arbitrary bounded subsequence $\rho_{n_k}$. Taking the infimum the we have the bound in one direction. For the other direction we note that, as before, for all $\eps>0$ we can find a bounded subsequence $\rho_{n_k}$ such that
\begin{align}
\liminf_{n_k \to \infty} G(\rho_{n_k}) \leq  \liminf_{n \to \infty} G(\rho_{n}) + \eps \, . 
\end{align}   
Thus,~\eqref{eq:liminfcharacterisation} follows. For every $G$-bounded sequence $(\rho_n)_{n \in \N}$ weakly convergent to some $\rho \in \cP(X)$, we  set $v_n(x)=v_{\rho_n}(x)$  and note that
\begin{align}
\sup_{n \in \N} \norm{v_n}_{\Leb^p(X,\rho_n)} < \infty \, .
\end{align}
Applying~\cite[Theorem 5.4.4]{AGS08}, we have that
\begin{align}
G(\rho) \leq \liminf_{n_k \to \infty}G(\rho_{n}) \, .
\end{align}
Taking the infimum over all bounded subsequences and using~\eqref{eq:liminfcharacterisation}, the result of the theorem follows.
\end{proof}

\bibliographystyle{myalpha}
\bibliography{refs}

\end{document}